\newcommand{\excise}[1]{}%{$\star$\textsc{#1}$\star$}
\newtheorem{theorem}{Theorem}[section]
\newtheorem{lemma}[theorem]{Lemma}
\newtheorem{cor}[theorem]{Corollary}
\newtheorem{prop}[theorem]{Proposition}
\newtheorem{conjecture}[theorem]{Conjecture}
\theoremstyle{definition}
\newtheorem{example}[theorem]{Example}
\newtheorem{remark}[theorem]{Remark}
\newtheorem{notation}[theorem]{Notation}
\noindent\makebox[0mm][r]{\arabic{enumi}.}}
\noindent\makebox[0mm][r]{(\roman{enumi})}}
\DeclareMathAlphabet{\mathpzc}{OT1}{pzc}{m}{it}
\DeclareMathAlphabet{\mathcalligra}{T1}{calligra}{m}{n}
\def\<{\langle}
\def\>{\rangle}
\def\0{\mathbf{0}}
\def\AA{{\mathbb A}}
\def\CC{{\mathbb C}}
\def\cC{{\mathcal C}}
\def\DD{{\mathcal D}}
\def\EE{{\mathbb E}}
\def\cE{{\mathcal E}}
\def\HH{{\mathcal H}}
\def\ds{{\operatorname{d}_s}}
\def\KK{{\mathcal K}}
\def\MM{{\mathcal M}}
\def\NN{{\mathbb N}}
\def\cO{{\mathcal O}}
\def\PP{{\mathbb P}}
\def\QQ{{\mathbb Q}}
\def\RR{{\mathbb R}}
\def\TT{{\mathbb T}}
\def\ZZ{{\mathbb Z}}
\def\bbE{{\mathbb E}}
\def\cJ{{\mathcal J}}
\def\fS{{\mathfrak S}}
\renewcommand\SS{{\mathbb S}}
\renewcommand\Re{{\mathrm {Re} }}
\def\del{\partial}
\def\dx#1{{\partial_{x_{#1}}}}
\def\boldone{\boldsymbol{1}}
\def\boldzero{\boldsymbol{0}}
\def\ini{{\mbox{in}}}
\def\qdeg{{\rm qdeg}}
\def\gr{{\rm gr}}
\def\Hom{\operatorname{Hom}}
\def\Ext{\operatorname{Ext}}
\def\vol{{\rm \operatorname{vol}}}
\def\conv{{\rm \operatorname{conv}}}
\def\rank{{\operatorname{rank}}}
\def\Var{{\rm Var}}
\def\Spec{{\rm Spec}}
\def\codim{{\rm codim}}
\def\ns{\operatorname{nsupp}}
\def\charVar{{\operatorname{Char}}}
\def\charC{{\operatorname{CC}}}
\def\Var{{\rm Var}}
\def\Irr{{\operatorname{Irr}}}
\newcommand\HA{{H_A(\beta)}}
\def\minus{\smallsetminus}
\def\nothing{\varnothing}
\newcommand*{\defeq}{\mathrel{\vcenter{\baselineskip0.5ex \lineskiplimit0pt
                     \hbox{\scriptsize.}\hbox{\scriptsize.}}}%
                     =}
\def\ol#1{{\overline {#1}}}
\def\wt#1{{\widetilde {#1}}}
\def\dlim{{\lim\limits_{\raisebox{.2ex}{$\scriptstyle\longrightarrow$}}}{_{\,}}}
\numberwithin{equation}{section}
\newcommand{\christine}[1]{{\color{blue} \sf $\clubsuit$ C: #1}}
\newcommand{\maria}[1]{{\color{ForestGreen} \sf $\clubsuit$ M: #1}}
\begin{document}%%%%%%%%%%%%%%%%%%%%%%%%%%%%%
%%%%%%%%%%%%%%%%%%%%%%%%%%%%%%%%%%%%%%

\mbox{}
\title[Characteristic cycles and Gevrey series solutions of $A$-hypergeometric systems]{Characteristic cycles and Gevrey series solutions\\ of $A$-hypergeometric systems}

\author{Christine Berkesch}
\address{School of Mathematics \\
University of Minnesota.}
\email{cberkesc@umn.edu}

\author{Mar\'ia-Cruz Fern\'andez-Fern\'andez}
\address{Departamento de \'Algebra \\
Universidad de Sevilla.}
\email{mcferfer@algebra.us.es}

\thanks{CB was partially supported by NSF Grants DMS 1661962, DMS 1440537, OISE 0964985. MCFF was partially supported by MTM2016-75024-P, PP2014-2397, P12-FQM-2696 and FEDER}

%\subjclass[2000]{Primary: 13D45, 16E45; Secondary: 13F55, 13C14, 13D07}
%\amsclassification[]{13N10, 32C38, 33C70, 14M25.}
\subjclass[2010]{13N10, 32C38, 33C70, 14M25.}
\keywords{$A$--hypergeometric system, toric ring, $D$--module, characteristic cycle, irregularity sheaf, Gevrey series.}

\begin{abstract}
We compute the $L$-characteristic cycle of an $A$-hypergeometric system and higher Euler--Koszul homology modules of the toric ring. 
We also prove upper semicontinuity results about the multiplicities in these cycles and apply our results to analyze the behavior of  Gevrey solution spaces of the system. 
\end{abstract}
\maketitle

\mbox{}
\vspace{-15mm}
\parskip=0ex
\parindent2em
%\setcounter{tocdepth}{1}
%\tableofcontents
\parskip=1ex
\parindent0pt

%%%%%%%%%%%%%%%%%%%%%%%%%%%%%%%%%%%%%%
\setcounter{section}{0}
\section*{Introduction}
%%%%%%%%%%%%%%%%%%%%%%%%%%%%%%%%%%%%%%

Let $D$ denote the Weyl algebra on $X = \CC^n$ with coordinates $x = x_1,\dots,x_n$. Let $\del_i$ denote the variable that acts on $\CC[x]$ as $\frac{\del}{\del x_i}$ and write $\del = \del_1,\dots,\del_n$. 
A \emph{weight vector} on $D$ is $L=(L_x,L_\del)\in\QQ^n\times\QQ^n$ such that $L_x+L_\del\geq 0$.
Such a vector induces an exhaustive increasing filtration $L$ on $D$ by, for $k \in \QQ$, 
\[
L^k D \defeq \CC\cdot\{x^u\del^v \mid L\cdot(u,v) \leq k\}.
\] 
Write $L^{<k} D\defeq \bigcup_{\ell < k} L^{\ell} D$.   
For any $P$ in $L^k D\minus L^{<k} D$, set 
\[
\ini_L(P)\defeq P+L^{<k}D\in\gr^{L,k} D \defeq
L^k D / L^{<k} D \subseteq \gr^L D 
\quad\text{and}\quad \deg^L(P) \defeq k.
\]  
For a left $D$-ideal $I$ and the $D$-module $M=D/I$, set 
\[
\gr^L(I)\defeq \<\ini_L(P)\mid P\in I\>\subseteq \gr^L(D)
\quad\text{and}\quad 
\gr^L(M)=\gr^L(D)/\gr^L(I).
\] 
If $L_x+L_\del=0$, the associated graded ring $\gr^L D$ is isomorphic to $D$ and $\gr^L(I)$ can be identified with a left $D$-ideal, which is also called a \emph{Gr\"obner deformation} of $I$ in \cite{SST}. It is suggestive to call $\gr^L (M)$ the Gr\"obner deformation of $M$ with respect to $L$. On the other hand, if $L_x+L_\del>0$, the associated graded
ring $\gr^L D$ is isomorphic to the coordinate ring of $T^*X\cong
\CC^{2n}$, which is a polynomial ring in $2n$ variables. In this latter case, 
the \emph{$L$-characteristic variety} of $M$ is
\vspace{-1mm}
\begin{equation}
\label{eqn:charVar}
\charVar^L(M) \defeq \Var(\gr^L(I)) \subseteq T^*X\cong\CC^{2n}.
\end{equation}
The \emph{$L$-characteristic cycle} of $M$ is the finite formal sum 
\[
\charC^L (M) \defeq \sum_{C} \mu^{L,C}(M) \cdot C,
\] 
where $C$ runs over the irreducible components of $\charVar^L(M)$, and 
\[
\mu^{L,C}(M)\defeq \ell ( (\gr^L (M))_{P_C})
\] 
is the multiplicity of $\gr^L (M)$ along $C$, where $P_C$ is the defining ideal of $C$ in $\gr^L (D)$ and $\ell$ denotes the length of a $\gr^L (D)_{P_{C}}$-module.   

The projective weight vector $F = (\boldzero_n,\boldone_n) \defeq (0,\dots,0,1,\dots,1)\in\QQ^n\times\QQ^n$ induces the \emph{order filtration} on $D$. 
We notice that $\charVar^F (M)$ and $\charC^F (M)$ are called, respectively, the \emph{characteristic variety} and the  \emph{characteristic cycle} of $M$.
If $M$ is \emph{holonomic}, that is, the dimension of its characteristic variety is $n$, then the \emph{rank} of $M$, defined $\rank(M)\defeq\dim_{\CC(x)} \CC(x)\otimes_{\CC[x]} M$, coincides with the dimension of the space of germs of its holomorphic solutions at any nonsingular point by a result of Kashiwara (see e.g.  ~\cite[Theorem~1.4.19]{SST}). Notice that $\rank(M)=\mu^{F,C}(M)$ for $C=T^*_X X$.

One motivation for the study of $L$-characteristic cycles comes from the theory of irregularity of holonomic $D$-modules. 
For a flavor of this deep and involved theory that fits the goals of this paper, 
a projective weight vector of the form  $L=F+(s-1)V$ where $s\in \QQ$ and $V=(-w,w)$, with $w=(0,\ldots,0,1)$, induces the Kashiwara--Malgrange filtration along the coordinate hyperplane $Y=\{x_n=0\}\subseteq X= \CC^n$. In this case, the $L$-characteristic variety $\charVar^{F+(s-1)V}(M)$ is locally constant with respect to $s\in\QQ$, except for at a finite set of values called \emph{algebraic slopes} of $M$ along $Y$. This is a global version of the algebraic slopes defined and studied by Laurent \cite{Laurent}. On the other hand, the \emph{analytic slopes} of $M$ along $Y$ were defined as jumps in the \emph{Gevrey} filtration of the \emph{irregularity sheaf} of $M$ along $Y$ by Mebkhout \cite{Mebkhout}. The comparison theorem for slopes states that the algebraic and analytic slopes for $M$ along $Y$ coincide, and, even more, the Euler--Poincar\'e characteristic of the irregularity sheaf can be computed in terms of the $L$-characteristic cycles of $M$ \cite{LM}. In particular, certain multiplicities in the $L$-characteristic cyles are closely related to the dimension of the space of Gevrey solutions of $M$ along $Y$.

Another motivating idea of this article is that the $F$-characteristic cycle of a Gr\"obner deformation of a holonomic $D$-module $M$ is equal to the $L$-characteristic cycle of $M$ for an approppriate $L$ (see Lemma \ref{characteristic-cycles} for the precise statement). In particular, the holonomic rank of such a Gr\"obner deformation is the multiplicity of the component $T^*_{X}X$ in $\charC^L(M)$. 

Our main interest is $A$-hypergeometric $D$-modules, also known as GKZ-systems after their introduction and study by Gelfand, Graev, Kapranov, and Zelevinsky \cite{GGZ,GKZ,GKZ-int}. Let $A =[a_{ij}] = [a_1 \cdots a_n]\in\ZZ^{d\times n}$ be an integral matrix such that the group generated by the columns of $A$, $\ZZ A$, is equal to $\ZZ^d$, and the positive real cone $\RR_{\geq 0} A$ over the columns is pointed. 
Let 
\[
I_A \defeq \<\del^u-\del^v\mid Au=Av\>\subseteq\CC[\del]
\] 
denote the \emph{toric ideal} of $A$. For $\beta\in\CC^d$, write $E - \beta$ for the sequence of \emph{Euler operators} given by 
\[
E_i - \beta_i \defeq \sum_{j=1}^n a_{ij}x_j\del_j - \beta_i
\] 
for $i=1,\dots,d$. 
The \emph{$A$-hypergeometric system} of $A$ at $\beta\in\CC^d$ is 
\[
\HA \defeq 
D\cdot( I_A + \<E-\beta\> )
\quad\text{with associated module}\ \  
M_A(\beta) \defeq D/\HA.
\]

A weight vector $L=(L_x,L_{\del})\in\QQ^n\times\QQ^n$ as above is called \emph{projective} if $L_x+L_{\del} =
c\cdot\boldone_n \defeq c\cdot(1,\dots,1)$ for some constant $c>0$. Notice that any Euler operator $E_i$ is homogeneous with respect to such a filtration. In~\cite{slopes}, the irreducible components of $\charC^L(M_A(\beta))$ were enumerated, and when $\beta$ is generic (or not rank--jumping), $\charC^L(M_A(\beta))$ was computed. In this article, we compute $\charC^L(M_A(\beta))$ for any $\beta$, along with the characteristic cycles of higher Euler--Koszul homology modules (see Section~\ref{sec:EKhom}) of the toric ring $\CC[\del]/I_A$.  We also provide upper semicontinuity results for some of these multiplicities and apply our results to the Gevrey solution spaces of $M_A(\beta)$.

%%%%%%%%%%%%%%%%%%%%%%%%%%%%%%%%%%%%%
\subsection*{Outline}
%%%%%%%%%%%%%%%%%%%%%%%%%%%%%%%%%%%%%
In \S\S\ref{sec:EKhom}-\ref{sec:charC}, we provide background and preliminary results on Euler--Koszul homology and $L$-characteristic cycles of $A$-hypergeometric systems. 
We compute the multiplicities in the characteristic cycles of the Euler--Koszul homology of the toric ring in \S\ref{sec:mults}, with consequences in \S\ref{sec:consequences}. 
We provide upper semicontinuity results in \S\ref{sec:upperSC} and study Gevrey solutions of $\HA$ in \S\ref{sec:gevrey}. 

%%%%%%%%%%%%%%%%%%%%%%%%%%%%%%%%%%%
\subsection*{Acknowledgements}
%%%%%%%%%%%%%%%%%%%%%%%%%%%%%%%%%%%
We are grateful to Francisco Jes\'us Castro Jim\'enez, Laura Felicia Matusevich, and Uli Walther for helpful conversations related to this work. 
The second author would like to thank the School of Mathematics of the University of Minnesota for the hospitality during her visit to work on this paper with the first author. 

%%%%%%%%%%%%%%%%%%%%%%%%%%%%%%%%%%%%%%
\section{Euler--Koszul homology}
%%%%%%%%%%%%%%%%%%%%%%%%%%%%%%%%%%%%%%
\label{sec:EKhom}

In this section, we present background related to Euler--Koszul homology, as found in~\cite{MMW,ekdi}, with some additions needed in the sequel. 
We use the convention that $0\in\NN$. 
Recall that $a_i$ denotes the $i$th column of the matrix $A$. 
Given a subset $\tau\subseteq A$ of the column set of $A$, the semigroup generated by $\tau$, 
\[
\NN \tau \defeq \left\{ \sum_{a_i\in\tau} j_i a_i\; \Bigg|\; j_i\in\NN \text{ for all } a_i\in \tau\right\},
\]
generates the semigroup ring $S_\tau\defeq \CC[\NN \tau]$. 
With $\pi_\tau\colon \CC[\del_\tau]\defeq\CC[\del_j\,|\, a_j\in \tau]\to S_\tau$ denoting the map induced by $\tau$, 
we have the isomorphism of rings $S_\tau  \cong \CC[\del_\tau]/\ker\pi_\tau$.
When convenient, we will abuse notation and also view $\tau$ as a matrix.

A subset $G$ of the columns of the matrix $A$ is a \emph{face} of $A$, denoted $G\preceq A$, if $\RR_{\geq0}G$ is a face of the cone $\RR_{\geq0}A$ and $G = A\cap\RR G$. 
The codimension of a nonempty face $G$ is $\codim(G)\defeq d-\dim (\RR G)$, with $\codim (\nothing)=d$ by convention. Let $G^c$ denote the complement of $G$ in $A$.

Define a $\ZZ^d$-grading on $D$ via 
$\deg(x_i)\defeq -a_i$ and $\deg(\dx{i})\defeq a_i$.
A $\ZZ^d$-graded $\CC[\del]$-module $N$ is \emph{toric} if it has a filtration 
\[
0 = N^{(0)} \subseteq N^{(1)} \subseteq \cdots \subseteq N^{(\ell - 1)} \subseteq N^{(\ell)} = N
\]
such that for each $i$, $N^{(i)}/N^{(i-1)}$ is a $\ZZ^d$-graded translate of 
$S_{G_i}$ for some face $G_i \preceq A$. 
The \emph{degree set} of a finitely generated $\ZZ^d$-graded $\CC[\del]$-module $N$ is $\deg(N) \defeq \{\alpha\in\ZZ^d\mid N_\alpha\neq 0\}$. 
The \emph{quasidegree set} of $N$, denoted $\qdeg(N)$, is the Zariski closure of
$\deg(N)$ under the natural embedding $\ZZ^d\hookrightarrow\CC^d$. 
A $\ZZ^d$-graded $\CC[\del]$-module $N$ is \emph{weakly toric} if 
there is a filtered partially ordered set $(\fS,\leq)$
and a $\ZZ^d$-graded direct limit 
\[
\phi_s\colon N^{(s)} \to {\dlim}_{s\in\fS} N^{(s)} = N,
\]
where $N^{(s)}$ is a toric $\CC[\del]$-module for each $s\in\fS$. 
The \emph{quasidegrees} of $N$ are
\[
\qdeg(N)\defeq \bigcup_{s\in\fS} \qdeg(\phi_s(N^{(s)})),
\]
where each $\qdeg(\phi_s(N^{(s)}))$ is already defined since $\phi_s(N^{(s)})$ is toric for each $s$. 

Let $N$ be a weakly toric module. 
Given a homogeneous $y\in D\otimes_{\CC[\del]} N$, define an action of the Euler operators for $1\leq i\leq d$ by 
\begin{eqnarray*}
(E_i-\beta_i)\circ y=(E_i - \beta_i + \deg_i(y))y,
\end{eqnarray*}
and extend this action $\CC$-linearly to $D\otimes N$.   
With this sequence of commuting endomorphisms on $D\otimes N$, let $\KK^A_\bullet(N,\beta)$ denote the Koszul complex on the left $D$-module $D\otimes_{\CC[\del]} N$, which we call the \emph{Euler--Koszul complex} of $N$ at $\beta$. 
Its homology is denoted $\HH^A_i(N,\beta)\defeq H_i(\KK^A_\bullet(N,\beta))$ or simply $\HH_i(N,\beta)$ when $A$ is clear from the context. 
Euler--Koszul homology was first introduced in~\cite{MMW} for toric modules and extended to weakly toric modules in~\cite{ekdi}.

If $b\in\ZZ^d$, we denote by $N(b)$ a $\ZZ^d$-graded translated copy of $N$ such that $N(b)_v=N_{v-b}$ for all $v\in\ZZ^d$. Thus, $\deg (N(b)) =b+\deg (N)$. For example, if $N=S_A=\CC[\NN A]$ then $N(b)=\CC[\NN A]t^b$. 
Euler--Koszul homology is compatible with these graded shifts. Namely, we have
\begin{align}
\label{eqn:shiftHH}
\HH_q(N(b),\beta)\cong \HH_q(N,\beta-b)(b).
\end{align}

\begin{theorem}{\cite[Theorem~5.4]{ekdi}}
\label{thm:HHvanishing}
For a weakly toric module $N$, the following are equivalent:\\
\vspace{-7mm}
\begin{enumerate}
\item $\HH_i(N,\beta)=0$ for all $i\geq0$,
\item $\HH_0(N,\beta)=0$,
\item $\beta\notin\qdeg(N).$
\qed
\end{enumerate}
\end{theorem}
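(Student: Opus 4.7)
The plan is to prove the implications in the order (1)$\Rightarrow$(2), (3)$\Rightarrow$(1), and (2)$\Rightarrow$(3), with the last using (3)$\Rightarrow$(1) via long exact sequences. The implication (1)$\Rightarrow$(2) is immediate from the definition.

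For (3)$\Rightarrow$(1), I would reduce in three steps to a core calculation on face semigroup rings. First, since Euler--Koszul homology commutes with the filtered direct limits defining weakly toric modules and $\qdeg(N) = \bigcup_s \qdeg(\phi_s(N^{(s)}))$, this reduces to the case that $N$ is toric. Second, for such $N$ the toric filtration $0 = N^{(0)} \subset \cdots \subset N^{(\ell)} = N$ with subquotients $S_{G_i}(b_i)$ yields long exact sequences that, by induction on $\ell$, reduce to the case $N = S_G(b)$; the shift formula \eqref{eqn:shiftHH} then reduces further to showing $\HH_q(S_G, \beta) = 0$ for all $q$ when $\beta \notin \CC G$.

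For this core claim I would change basis via $\operatorname{GL}_d(\ZZ)$ so that $\ZZ G = \ZZ^r \times \{0\}^{d-r}$, where $r = \dim_\RR \RR G$; the condition $\beta \notin \CC G$ then becomes $\beta_i \neq 0$ for some $i > r$. For such $i$, we have $a_{ij} = 0$ whenever $a_j \in G$, while $\del_j$ acts as zero on $S_G$ whenever $a_j \notin G$. A direct calculation on a $\ZZ^d$-homogeneous element $y = x^u \otimes t^\gamma \in D \otimes_{\CC[\del]} S_G$ then yields $E_i\,y = \bigl(\sum_{a_j \notin G} a_{ij} u_j\bigr) y$, while $\deg_i(y) = -\sum_{a_j \notin G} a_{ij} u_j$, so the twisted action $(E_i - \beta_i)\circ y = (E_i - \beta_i + \deg_i(y)) y$ collapses to $-\beta_i\,y$. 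Hence for $i$ with $\beta_i \neq 0$, the $i$-th Koszul differential is multiplication by the unit $-\beta_i$, yielding a contracting chain homotopy for the entire Euler--Koszul complex and therefore $\HH_q(S_G,\beta) = 0$ for every $q$.

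For (2)$\Rightarrow$(3), I argue the contrapositive, reducing to toric $N$ as above and inducting on the filtration length. In the base case $N = S_G(b)$ with $\beta - b \in \CC G$, all $i > r$ differentials already vanish on $D \otimes_{\CC[\del]} S_G$ in the chosen coordinates, so $\HH_0(S_G, \beta - b)$ identifies with the classical $A$-hypergeometric module of the smaller matrix $G$ at parameter $(\beta-b)|_{\leq r}$, which is a nonzero $D$-module. For the inductive step, the long exact sequence associated to $0 \to N^{(\ell-1)} \to N \to S_{G_\ell}(b_\ell) \to 0$ propagates nonvanishing from whichever subquotient contains $\beta$ in its quasidegree set: when $\beta \in \qdeg(S_{G_\ell}(b_\ell))$ the base case applies directly, while if $\beta \in \qdeg(N^{(\ell-1)}) \setminus \qdeg(S_{G_\ell}(b_\ell))$, the already-established (3)$\Rightarrow$(1) gives $\HH_1(S_{G_\ell}(b_\ell), \beta) = 0$, which guarantees that the inductive non-vanishing on the submodule lifts to $\HH_0(N, \beta)$. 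I expect the only genuine obstacle to be the cancellation in the core calculation, where one must carefully track the modified Euler action, the face structure, and the $\ZZ^d$-grading simultaneously; the rest is formal homological bookkeeping with filtrations, shifts, and long exact sequences.
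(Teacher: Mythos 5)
The paper itself offers no proof of this statement: it is quoted directly from Schulze--Walther \cite[Theorem~5.4]{ekdi}, with the terminal \qed\ signalling a citation rather than an argument. So there is nothing internal to compare against, and your proposal has to be judged on its own merits.

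Most of it is sound. The implication (1)$\Rightarrow$(2) is immediate. Your core computation for (3)$\Rightarrow$(1) is correct: after a $\operatorname{GL}_d(\ZZ)$ change of basis with $\ZZ G=\ZZ^r\times\{0\}^{d-r}$, picking $i>r$ with $\beta_i\neq0$, one checks on $y=x^u\otimes t^\gamma$ that $E_i\,y=\bigl(\sum_{a_j\notin G}a_{ij}u_j\bigr)y$ and $\deg_i(y)=-\sum_{a_j\notin G}a_{ij}u_j$ (using $a_{ij}=0$ for $a_j\in G$, $\gamma_i=0$, and $\del_j S_G=0$ for $a_j\notin G$), so $(E_i-\beta_i)\circ y=-\beta_i y$; a Koszul complex in which one of the commuting maps is invertible is contractible. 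The reductions from weakly toric to toric via filtered direct limits, from toric to $S_G(b)$ via the toric filtration and long exact sequences, and from $S_G(b)$ to $S_G$ via the shift~\eqref{eqn:shiftHH} are all standard and fine for a \emph{vanishing} conclusion, since vanishing commutes with these operations. Your toric base case and inductive step for (2)$\Rightarrow$(3) are also correct, including the use of the already-established (3)$\Rightarrow$(1) to kill $\HH_1(S_{G_\ell}(b_\ell),\beta)$ when $\beta\notin\qdeg(S_{G_\ell}(b_\ell))$.

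The genuine gap is in the phrase ``reducing to toric $N$ as above'' for (2)$\Rightarrow$(3). The ``as above'' reduction works for (3)$\Rightarrow$(1) precisely because $\HH_i(N,\beta)=\dlim\HH_i(\phi_s(N^{(s)}),\beta)$ and a filtered colimit of zero modules is zero. But (2)$\Rightarrow$(3) in contrapositive form is a \emph{non}-vanishing statement: if $\beta\in\qdeg(\phi_{s_0}(N^{(s_0)}))$ then each $\HH_0(\phi_t(N^{(t)}),\beta)$ for $t\geq s_0$ is nonzero, yet a filtered colimit of nonzero modules can perfectly well be zero, since the transition maps need not be injective. Concretely, the map $\HH_0(\phi_{s_0}(N^{(s_0)}),\beta)\to\HH_0(\phi_t(N^{(t)}),\beta)$ coming from the inclusion can have a kernel whenever $\beta$ also lies in $\qdeg\bigl(\phi_t(N^{(t)})/\phi_{s_0}(N^{(s_0)})\bigr)$, and nothing you write rules out that a fixed nonzero class is eventually annihilated. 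Filling this in requires an additional idea beyond the bookkeeping you describe; for instance, one must either exhibit a class that demonstrably survives to the colimit (e.g.\ by working in a single $\ZZ^d$-graded degree and showing the relevant map of degree pieces is injective), or run a more careful induction using the dichotomy $\qdeg(N)=\qdeg(N')\cup\qdeg(N/N')$ for the toric submodule $N'=\phi_{s_0}(N^{(s_0)})$. As written, (2)$\Rightarrow$(3) is only proved for toric $N$.
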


\begin{theorem}{\cite[Theorem~6.6]{MMW},\cite{ekdi}}
\label{thm:higherHHvanishing}
Let $N$ be a weakly toric module.
Then $\HH_i(N,\beta)=0$ for all $i>0$ and for all $\beta\in\CC^d$
if and only if $N$ is a maximal Cohen--Macaulay $S_A$-module.
\qed
\end{theorem}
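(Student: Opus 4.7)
The plan is to handle the two directions separately, in each case first reducing to the case that $N$ is toric (rather than merely weakly toric) using the fact that Euler--Koszul homology commutes with the direct limits appearing in the definition of weakly toric modules.

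For the implication ($\Leftarrow$), suppose $N$ is a maximal Cohen--Macaulay $S_A$-module. The crucial technical step is to identify the Euler--Koszul complex $\KK^A_\bullet(N,\beta)$, up to quasi-isomorphism, with a classical Koszul complex on $N$ built from a system of parameters for $S_A$, twisted by $\beta$. Concretely, because $N$ carries a $\ZZ^d$-grading compatible with the $S_A$-action, the operators $E_i - \beta_i$ acting on $D \otimes_{\CC[\del]} N$ should descend, via the factorization $D = \CC[x] \otimes_\CC \CC[\del]$, to operators on $N$ that agree with multiplication by a system of parameters $\theta_1(\beta),\dots,\theta_d(\beta)$ of $S_A$ on each graded piece. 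The MCM hypothesis then guarantees these parameters form a regular sequence on $N$ independently of $\beta$, and Koszul exactness yields $\HH_i(N,\beta) = 0$ for all $i > 0$.

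For the implication ($\Rightarrow$), I argue the contrapositive: if $N$ fails to be MCM over $S_A$, then there exists $\beta$ with $\HH_i(N,\beta) \neq 0$ for some $i > 0$. Two subcases arise. First, if $I_A$ does not annihilate $N$, then any toric filtration of $N$ contains a subquotient isomorphic to $S_G(b)$ for a proper face $G \preceq A$. Using the long exact sequences in Euler--Koszul homology coming from such a filtration together with Theorem~\ref{thm:HHvanishing}, one can choose $\beta$ in the proper affine subspace $b + \CC G$ for which vanishing of $\HH_0$ on the ambient pieces forces nonvanishing of some $\HH_i$ with $i > 0$ on $N$. Second, if $N$ is an $S_A$-module but $\operatorname{depth}_{S_A} N < d$, then some local cohomology module $H_\mm^{<d}(N)$ is nonzero, where $\mm$ is the maximal graded ideal of $S_A$; by local duality the Koszul complex on any system of parameters for $S_A$ fails to be exact on $N$, and transporting this nonvanishing through the identification from the ($\Leftarrow$) direction produces a $\beta$ with $\HH_i(N,\beta) \neq 0$.

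The main obstacle I anticipate is the identification of the Euler--Koszul complex with a Koszul complex on a system of parameters of $S_A$ acting on $N$. This requires careful manipulation of the tensor factorization $D = \CC[x] \otimes_\CC \CC[\del]$ so that the action of the Euler operators on $D \otimes_{\CC[\del]} N$ can be decoupled from the $\CC[x]$-direction and made to descend to an operation intrinsic to the $S_A$-module $N$, and most likely a spectral sequence or explicit filtration argument to deform the two complexes into one another while controlling homology. Once this identification is in hand, the MCM characterization of regular sequences independent of $\beta$ is a standard fact, and the contrapositive follows by matching nonvanishing Koszul homology to nonvanishing Euler--Koszul homology through the same identification.
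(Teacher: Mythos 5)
First, note that the paper does not prove this statement: it is quoted verbatim from \cite[Theorem~6.6]{MMW} (extended to weakly toric modules in \cite{ekdi}), so your proposal must be measured against that source.

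The central mechanism of your proposal --- identifying $\KK^A_\bullet(N,\beta)$, up to quasi-isomorphism, with the ordinary Koszul complex of a system of parameters $\theta_1(\beta),\dots,\theta_d(\beta)$ of $S_A$ acting on $N$ --- cannot work, and since both directions of your argument route through it, this is a genuine gap rather than a technical obstacle to be overcome. The operators $E_i-\beta_i$ do not descend to $N$: writing $D\otimes_{\CC[\del]}N\cong\CC[x]\otimes_\CC N$ as graded vector spaces, $E_i$ sends $1\otimes y$ to $\sum_j a_{ij}x_j\otimes\del_j y$, which does not lie in $\CC\otimes N$, so there is no induced endomorphism of $N$ to compare with multiplication by an element of $S_A$. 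More decisively, the two complexes have incomparable homology: if $N$ were MCM and the identification held, $\HH_0(N,\beta)$ would be a finite-length $S_A$-module, whereas $\HH_0(S_A,\beta)=M_A(\beta)$ is a holonomic $D$-module of generic rank $\vol_{\ZZ^d}(A)$, infinite-dimensional over $\CC$. The actual proof in \cite{MMW} proceeds quite differently: one takes a $\ZZ^d$-graded free resolution of $N$ over $\CC[\del]$, applies the exact functor $D\otimes_{\CC[\del]}(-)$ together with the Euler--Koszul differentials to obtain a double complex, and extracts a spectral sequence relating $\HH_i(N,\beta)$ to the Euler--Koszul homology of the modules $\Ext^j_{\CC[\del]}(N,\CC[\del])$; graded local duality then converts these $\Ext$ modules into the local cohomology $H^j_\mm(N)$, and the conclusion is that higher Euler--Koszul homology is nonvanishing for some $\beta$ exactly when some $H^j_\mm(N)$ with $j<d$ is nonzero (its nonempty quasidegree set supplies the offending parameters, via Theorem~\ref{thm:HHvanishing}), which is precisely the failure of $N$ to be maximal Cohen--Macaulay.

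Your sketch of the converse direction is closer in spirit to arguments that do appear in the literature (long exact sequences over a toric filtration, choosing $\beta$ in $b+\CC G$ to force nonvanishing), but as written it is too vague to certify: ``vanishing of $\HH_0$ on the ambient pieces forces nonvanishing of some $\HH_i$ with $i>0$'' needs the duality/local-cohomology input above to be made precise, and your second subcase again leans on the impossible identification. I recommend restructuring the whole argument around the $\Ext$--local-cohomology spectral sequence of \cite{MMW}.
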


For a subset $\tau\subseteq A$, given an $\NN \tau$-module $\SS$, define the $S_\tau$-module $\CC\{\SS\}\defeq \bigoplus_{s\in\SS}\CC\cdot t^s$ as a $\CC$-vector space with $S_{\tau}$-action given by $\del_i\cdot t^s = t^{s+a_i}$. Then $\CC\{\SS\}$ has a multiplicative structure given by $t^s\cdot t^{s'} = t^{s+s'}$, and $S_\tau\cong\CC\{\NN \tau\}$ as rings.
The \emph{saturation} of $\tau$ in $\ZZ \tau$ is the semigroup
$\widetilde{\NN \tau}=\RR_{\geq 0}\tau\cap\ZZ \tau$.
The \emph{saturation} of $S_\tau$ is the semigroup ring 
of the saturation of $\tau$ in $\ZZ \tau$, which is given by $\widetilde{S}_\tau=\CC\{ \widetilde{\NN \tau} \}$ 
as a $\ZZ^d$-graded $S_\tau$-module.
By \cite{Hochster}, $\widetilde{S}_\tau$ is a Cohen--Macaulay $S_\tau$-module.

%%%%%%%%%%%%%%%%%%%%%%%%%%%%%%%%%%%%%%
\section{Characteristic cycles of $A$-hypergeometric systems}
%%%%%%%%%%%%%%%%%%%%%%%%%%%%%%%%%%%%%%
\label{sec:charC}

Let $L=(L_x,L_\del) \in \QQ^{2n}$ be a projective weight vector on $D$. 
In this section, we recall from~\cite{slopes} the description of the $L$-characteristic variety of an $A$-hypergeometric system, which includes the computation of the $L$-characteristic cycle of $\HA$ when $\beta$ is not rank--jumping for $A$. 

Let $h = (h_1,\dots,h_d)\in \QQ^d$ be such that $h\cdot a_i >0$ for $i=1,\dots,n$. 
Choose $\varepsilon>0$ such that $h\cdot a_i +\varepsilon\, L_{\del_i} >
0$ for $i=1,\dots,n$, and denote by $H_\varepsilon$ the hyperplane in $\PP^d_{\QQ}$
given by 
\[
\{ [y_0:y_1:\cdots:y_d] \in \PP^d_\QQ \mid \varepsilon \, y_0
+ h_1 y_1+\cdots +h_d y_d = 0\}.
\] 
The \emph{$L$-polyhedron of $A$} is the convex hull of $\{[1:\boldzero_d],
[L_{\del_1}:a_1],\dots,[L_{\del_n}:a_n]\}$ in the affine space $\PP^{d}_\QQ \minus H_\varepsilon$.
The \emph{$(A,L)$-umbrella}, denoted $\Phi_A^L$, is the set of
faces of the $L$-polyhedron of $A$ that do not contain $[1:\boldzero_d]$.  

We denote by $\Phi_A^{L,k}\subset \Phi_A^L$ the subset of faces $\tau$ of dimension $k$ (equivalently, $\dim ( \CC \tau)=k+1$). A face $\tau$ of $\Phi_A^L$ will be identified with $\{j\in\{1,\ldots ,n\} \mid [L_{\del_j}:a_j]\in \tau \}$ or with the submatrix of $A$ indexed by this set, when necessary. With this identification, $\Phi_A^L$ is an abstract polyhedral complex.
For any face $G\preceq A$, set 
$\Phi_G^L \defeq \{ \tau \in \Phi_A^L \mid \tau \subseteq G\}$. 

Let $(x,\xi)$ denote the coordinates on $T^*X = T^*\CC^n$. 
For any $\tau \subseteq \{1,\ldots ,n\}$, let 
\[
C_A^{\tau} 
  \defeq \left\{(x,\xi )\in T^*X 
  \,\big\vert\, 
  \xi_i =0 \mbox{ for } i \notin \tau, \,  \textstyle\sum_{i\in \tau } a_i x_i \xi_i 
  = 0 \text{ and } \exists t\in (\CC^*)^d, \,
  \xi_j = t^{a_j}, \, \forall j \in \tau  
 \right\},
\] 
and let $\overline{C_A^{\tau}}$ denote the Zariski closure of $C_A^{\tau}$ in $T^*X$, with defining ideal $P_{\tau}\subseteq \CC [ x ,\xi ]$. In particular, 
$\overline{C_A^{\nothing}}=T_X^*X$ and $\overline{C_A^{\{j\}}}=T_{(x_j =0)}^*X$.

If $N$ is a $\ZZ^d$-graded $\CC[\del]$-module and $C=\overline{C_A^{\tau}}$ for some $\tau \in \Phi_A^L$,  we write  
\begin{align}\label{eqn:muDef}
\mu_{A,i}^{L,\tau}(N,\beta)
  \defeq \mu^{L,C}(\HH_i (N,\beta))=\ell ( (\gr^L (\HH_i (N,\beta)))_{P_\tau}). 
\end{align}

We will also denote $\mu_{A,i}^{L,\tau}(\beta )\defeq \mu_{A,i}^{L,\tau}(S_A,\beta)$.
By \cite[Corollary 4.13]{slopes},  
\begin{align}\label{eqn:genericmuDef}\mu_A^{L,\tau}\defeq \sum_{j=0}^d (-1)^j  \mu_{A,i}^{L,\tau}(\beta)=
\mu_{A,0}^{L, \tau}(\widetilde{S}_A ,\beta )=\mu_{A,0}^{L, \tau}(S_A [\del_A^{-1}] ,\beta )
 \end{align} is independent of $\beta\in \CC^d$.

Note that $\rank(M_A(\beta))$ is equal to 
$\mu_{A,0}^{F,\varnothing}(\beta)$. Since $M_A(\beta)$ is always holonomic~\cite{GGZ,adolphson}, its rank is always finite. 
Further, the rank of $M_A(\beta)$ is upper semicontinuous as a function of the parameter $\beta$, with a generic value equal to $\vol_{\ZZ^d}(A)$, the normalized volume in $\ZZ A=\ZZ^d$ of the convex hull of the columns of $A$ and the origin~\cite{MMW, adolphson,GKZ-int}. We recall that the normalized volume function in a lattice $\Omega$, denoted by $\vol_{\Omega}$, is defined so that the volume of the unit simplex in $\Omega$ (that is, the convex hull of the origin and a lattice basis of $\Omega$) is one. 

A parameter $\beta$ is said to be \emph{rank-jumping} when $\rank(M_A(\beta))>\vol_{\ZZ^d}(A)$. 
The set of rank--jumping parameters is described in \cite{MMW}; namely, with $\varepsilon_A \defeq \sum_{i=1}^n a_i$,  
\[
\cE_A 
\defeq \{\beta\in\CC^d\mid \rank(M_A(\beta))>\vol_{\ZZ^d}(A)\} 
= -\, \qdeg\left( \bigoplus_{i=0}^{d-1} \Ext_{\CC[\del]}^{n-i}(S_A,\CC[\del])( - \varepsilon_A)\right).
\]
Schulze and Walther provided a description of $\charC^L(\HA)$ when $\beta$ is not rank--jumping, as summarized through the following two results.

\begin{theorem}\cite[Theorem~4.21]{slopes}\label{thm:swFormula}
For all $G\preceq A$, 
if $\tau\in\Phi^L_G$, then
\[
\mu^{L,\tau}_G 
= \sum_{\tau\subseteq\tau'\in\Phi^{L,d'-1}_G}
    [\ZZ G :\ZZ\tau'] \cdot [(\ZZ\tau'\cap\QQ\tau):\ZZ\tau]
    \cdot \vol_{\pi (\ZZ \tau ')}(P_{\tau,\tau'}\setminus Q_{\tau,\tau'}), 
\] 
where $d'=\dim(\CC G)$, $\pi \colon \ZZ \tau ' \twoheadrightarrow \ZZ \tau ' /(\ZZ \tau ' \cap \QQ \tau )$ is the natural projection and $P_{\tau , \tau '}$ and $Q_{\tau ,\tau '}$ denote the convex hull of $\pi (\tau ' \cup \{0\}) $ and $\pi (\tau ' \setminus \tau)$ respectively.
\end{theorem}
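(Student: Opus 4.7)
The plan is to reduce to a single Cohen--Macaulay computation using \eqref{eqn:genericmuDef}, decompose the characteristic variety via the $(A,L)$-umbrella, and then compute the local multiplicity at $P_\tau$ as a sum of contributions from the top-dimensional faces of $\Phi_G^L$ containing $\tau$. By applying \eqref{eqn:genericmuDef} with $A$ replaced by $G$, I would rewrite $\mu_G^{L,\tau}$ as the single multiplicity $\mu_{G,0}^{L,\tau}(\widetilde{S}_G,\beta)$ for any convenient $\beta$. Since $\widetilde{S}_G$ is a maximal Cohen--Macaulay $S_G$-module by Hochster's theorem, Theorem~\ref{thm:higherHHvanishing} forces all higher Euler--Koszul homology to vanish, so the alternating sum collapses to the single length $\ell\bigl((\gr^L(\HH_0(\widetilde{S}_G,\beta)))_{P_\tau}\bigr)$.

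Next, I would analyze this initial module inside the polynomial ring $\gr^L D \cong \CC[x,\xi]$. By the description of the characteristic variety of $A$-hypergeometric systems in terms of the umbrella from~\cite{slopes}, the top-dimensional components of $\charVar^L(\HH_0(\widetilde{S}_G,\beta))$ are the conormal varieties $\overline{C_A^{\tau'}}$ for $\tau' \in \Phi_G^{L,d'-1}$. Since $P_\tau \supseteq P_{\tau'}$ if and only if $\tau \subseteq \tau'$, localizing at $P_\tau$ only picks up contributions from those top-dimensional faces $\tau'$ containing $\tau$. Hence $\ell((\gr^L(\HH_0(\widetilde{S}_G,\beta)))_{P_\tau})$ decomposes as a sum over these $\tau'$, and the question reduces to identifying the contribution from each.

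For a fixed $\tau \subseteq \tau' \in \Phi_G^{L,d'-1}$, I would extract the local contribution at $P_\tau$ in two stages. The factor $[\ZZ G : \ZZ\tau']$ is the generic multiplicity of $\overline{C_A^{\tau'}}$ as a top-dimensional component of the characteristic variety and arises from the degree of the saturation of $S_{\tau'}$ in $\ZZ G$ (the transverse lattice direction). Localizing further at $P_\tau \subseteq P_{\tau'}$ then extracts a lower-dimensional slice: projecting $\ZZ\tau'$ along $\QQ\tau$ via $\pi$, the monomial structure of the initial ideal restricted to the fiber over $P_\tau$ is controlled by the sublattice index $[(\ZZ\tau' \cap \QQ\tau):\ZZ\tau]$, and its length is computed via a Hilbert-series argument as the normalized volume $\vol_{\pi(\ZZ\tau')}(P_{\tau,\tau'} \setminus Q_{\tau,\tau'})$: the polytope spanned by $\pi(\tau' \cup \{0\})$ captures the full support of the associated graded in the projected lattice, while $\pi(\tau' \setminus \tau)$ carves out the monomials already absorbed into the $\tau$-prime contribution. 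The main obstacle is this final volume identity; it requires careful book-keeping of the primary decomposition of the initial ideal in projected coordinates, in particular tracking which graded pieces survive the localization at $P_\tau$ versus $P_{\tau'}$, and verifying that the resulting count matches the normalized volume of the set-theoretic difference $P_{\tau,\tau'} \setminus Q_{\tau,\tau'}$.
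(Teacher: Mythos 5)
The paper does not actually prove this statement: it cites \cite[Theorem~4.21]{slopes} for the case $G=A$ and declares the general case a ``straightforward adaptation,'' so there is no in-paper argument to compare against. Your opening reduction is correct and matches \eqref{eqn:genericmuDef}: since $\widetilde{S}_G$ is maximal Cohen--Macaulay, the higher Euler--Koszul homology vanishes and $\mu_G^{L,\tau}=\mu_{G,0}^{L,\tau}(\widetilde{S}_G,\beta)$ for any $\beta$.

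However, the structural step in your second paragraph rests on a false claim. You assert that $P_\tau\supseteq P_{\tau'}$ if and only if $\tau\subseteq\tau'$, and later write ``localizing further at $P_\tau\subseteq P_{\tau'}$.'' Neither containment can hold for $\tau\ne\tau'$. Every $\overline{C_A^\tau}$, for $\tau\in\Phi_A^L$, is an irreducible $n$-dimensional (Lagrangian) subvariety of $T^*X\cong\CC^{2n}$ -- this is the content of Theorem~\ref{thm:sw char} -- so the defining primes $P_\tau$ all have the same height $n$ in $\CC[x,\xi]$. A containment between two distinct primes of equal height is impossible, so the $P_\tau$ are pairwise incomparable. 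In particular there is no sense in which $\overline{C_A^\nothing}=T_X^*X$ is contained in $\overline{C_A^{\{j\}}}=T_{(x_j=0)}^*X$, even though $\nothing\subset\{j\}$. What a localization at the minimal prime $P_\tau$ actually sees is only the $P_\tau$-primary component of $\gr^L(\HH_0(\widetilde{S}_G,\beta))$; the other Lagrangian components $\overline{C_A^{\tau'}}$ are invisible after localizing. The sum over facets $\tau'\supseteq\tau$ in the formula is genuine, but it arises from the \emph{local} monomial structure of the initial module at the generic point of $\overline{C_A^\tau}$ -- in Schulze--Walther's argument, from a filtration of the toric initial module indexed by the polyhedral subdivision induced by $\Phi_G^L$ -- not from an inclusion of prime ideals.

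Finally, you defer the decisive quantitative step -- the identification of the local length with $[\ZZ G:\ZZ\tau']\cdot[(\ZZ\tau'\cap\QQ\tau):\ZZ\tau]\cdot\vol_{\pi(\ZZ\tau')}(P_{\tau,\tau'}\setminus Q_{\tau,\tau'})$ -- to ``careful book-keeping.'' That identity is the entire content of the theorem; without a filtration of the localized module by monomial pieces whose lengths can be matched against lattice-point counts, there is no proof. As it stands the proposal is a heuristic outline whose key bridging step is both unproved and justified by an incorrect assertion about the geometry of the $L$-characteristic variety.
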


\cite[Theorem~4.21]{slopes} is only stated for $G=A$. Theorem \ref{thm:swFormula} is a straightforward adaptation that will be useful in the sequel. Note that here we are using \eqref{eqn:muDef} and \eqref{eqn:genericmuDef} with $A$ replaced by $G$, but we still write $L$ for the filtration induced on the Weyl Algebra $D_G$ in the variables $\{x_j\mid j\in G\}$ by the projective weight vector given by the $G$-coordinates of $L_x$ and $L_\del$.

\begin{theorem}\cite[Corollary~4.12]{slopes}
\label{thm:sw char}
The $L$-characteristic variety of $M_A (\beta)$ is independent of $\beta \in \CC^d$ and given by 
\[ \charVar^L (M_A (\beta )) 
  = \bigcup_{\tau \in \Phi_A^L} 
    \overline{C_A^{\tau}},
\] 
where each component $\overline{C_A^{\tau}}$ is irreducible. Moreover, $\mu_{A,0}^{L,\tau}(\beta )\geq \mu_{A}^{L,\tau}$, and equality holds if $\beta$ is not rank--jumping.
\end{theorem}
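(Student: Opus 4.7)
The plan is to split the statement into three tasks: the $\beta$-independent variety computation, its irreducible decomposition via the $(A,L)$-umbrella, and the multiplicity bound with equality for non-rank-jumping $\beta$.

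For the variety and $\beta$-independence: since $L$ is projective, $L_x + L_\del = c\boldone_n$ for some $c > 0$, so each $x_j\del_j$ is $L$-homogeneous of degree $c$, and every Euler operator satisfies $\ini_L(E_i - \beta_i) = E_i$, the constant $\beta_i$ (of $L$-degree $0 < c$) dropping out. Therefore
\[
\gr^L(\HA) \;\supseteq\; \ini_L(I_A)\,\gr^L D + \langle E_1,\ldots,E_d\rangle\,\gr^L D,
\]
and I would argue this is an equality of radicals via a Gr\"obner basis argument: any $\ZZ^d$-graded Gr\"obner basis of $I_A$ with respect to $L_\del$ together with the Euler operators is a Gr\"obner basis of $\HA$ for any refinement of the $L$-order, extending \cite[Theorem~4.3.5]{SST} from $L = F$ to arbitrary projective $L$. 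This simultaneously establishes $\beta$-independence and reduces the computation to decomposing $\Var(\ini_L(I_A) + \langle E\rangle) \subseteq T^*X$.

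In the coordinates $(x,\xi)$ on $T^*X$, the ideal $\ini_L(I_A)$ depends only on $\xi$, and its zero locus decomposes into irreducible components indexed by the faces $\tau \in \Phi_A^L$---each the closure of the torus orbit $\{\xi_j = t^{a_j} \text{ for } j \in \tau,\ \xi_i = 0 \text{ for } i \notin \tau,\ t \in (\CC^*)^d\}$ in $\CC^n$---by the weighted version of the standard face-ring description of initial toric ideals. Intersecting with $\Var(E_1,\ldots,E_d)$ then imposes $\sum_{j\in\tau} a_{ij} x_j \xi_j = 0$ on each component, producing precisely $\overline{C_A^\tau}$; irreducibility is immediate from this parametric presentation as a torus-orbit closure.

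For the multiplicity, recall $\HH_0(S_A,\beta) = M_A(\beta)$, so $\mu_{A,0}^{L,\tau}(\beta) = \mu^{L,C}(M_A(\beta))$ with $C = \overline{C_A^\tau}$, while \eqref{eqn:genericmuDef} gives $\mu_A^{L,\tau} = \mu_{A,0}^{L,\tau}(\widetilde{S}_A,\beta)$. Apply Euler--Koszul homology to $0 \to S_A \to \widetilde{S}_A \to T \to 0$ with $T = \widetilde{S}_A/S_A$; since $\widetilde{S}_A$ is Cohen--Macaulay, Theorem~\ref{thm:higherHHvanishing} forces $\HH_i(\widetilde{S}_A,\beta) = 0$ for $i \geq 1$, and the long exact sequence collapses to
\[
0 \to \HH_1(T,\beta) \to \HH_0(S_A,\beta) \to \HH_0(\widetilde{S}_A,\beta) \to \HH_0(T,\beta) \to 0.
\]
Additivity of $\mu^{L,C}$ on exact sequences then yields
\[
\mu_{A,0}^{L,\tau}(\beta) - \mu_A^{L,\tau} = \mu^{L,C}(\HH_1(T,\beta)) - \mu^{L,C}(\HH_0(T,\beta)),
\]
and for $\beta \notin \cE_A$ the $\Ext$-quasidegree characterization of the rank-jumping locus combined with Theorem~\ref{thm:HHvanishing} kills both right-hand terms at $C$, giving equality. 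The main obstacle is establishing nonnegativity of this difference for every $\beta$: I expect this to require a finer quasidegree analysis of $T$ at the face $\tau$---possibly via a spectral sequence relating the Euler--Koszul homology of $T$ to the $\Ext$-modules controlling $\cE_A$---to govern the local cohomological contributions at $C$.
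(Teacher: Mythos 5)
This theorem is quoted directly from Schulze--Walther \cite[Corollary~4.12]{slopes}; the paper under review does not reprove it, so the comparison must be against the original argument. Your outline tracks that argument closely: the observation that $\ini_L(E_i-\beta_i)=E_i$ (since $\beta_i$ has $L$-degree $0<c$) gives $\beta$-independence, the radical of $\gr^L(\HA)$ is governed by $\ini_L(I_A)+\langle E\rangle$, the resulting variety decomposes into torus-orbit closures indexed by the $(A,L)$-umbrella, and the multiplicity comparison proceeds from the Euler--Koszul long exact sequence of $0\to S_A\to\widetilde S_A\to T\to 0$. One caveat on the first part: the assertion that a $\ZZ^d$-graded Gr\"obner basis of $I_A$ together with the Euler operators remains a Gr\"obner basis for $\HA$ under an arbitrary projective $L$ is itself a real lemma, not a routine extension of \cite[Theorem~4.3.5]{SST}; Schulze--Walther devote genuine work to it.

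The substantive gap is the one you flag yourself: you reduce the inequality to $\mu^{L,C}(\HH_1(T,\beta))\geq\mu^{L,C}(\HH_0(T,\beta))$, and this does not follow from quasidegree vanishing. For a toric module $T$ of positive dimension less than $d$, both homologies can carry nonzero multiplicity at $C$; the inequality needs a structural argument. Schulze--Walther obtain it by filtering $T$ by $\ZZ^d$-shifted face rings $S_G$ and, for each simple piece $N$ of dimension $<d$, running $0\to N\to\widetilde N\to N''\to 0$ with $\widetilde N$ Cohen--Macaulay (so $\HH_{\geq 1}(\widetilde N,\beta)=0$), which gives $\HH_i(N,\beta)\cong\HH_{i+1}(N'',\beta)$ for $i\geq 1$ and an injection $\HH_1(N'',\beta)\hookrightarrow\HH_0(N,\beta)$; a descending induction on $\dim N$ then closes the bound. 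This is precisely the content of \cite[Lemma~4.29 and Theorem~4.28]{slopes} (and the present paper later replaces it with the spectral sequence of Theorem~\ref{thm:computeMults}). Without reconstructing that induction, the nonnegativity is unproven. A smaller slip: for $\beta\notin\cE_A$ you say quasidegree considerations "kill both right-hand terms at $C$," but neither $\mu^{L,C}(\HH_0(T,\beta))$ nor $\mu^{L,C}(\HH_1(T,\beta))$ need individually vanish. What is true is that $\HH_i(S_A,\beta)=0$ for all $i>0$ by \cite[Theorem~6.6]{MMW}, so the alternating sum in \eqref{eqn:genericmuDef} collapses to $\mu_{A,0}^{L,\tau}(\beta)=\mu_A^{L,\tau}$ directly; asserting vanishing of the two $T$-terms instead risks circularity with the very identity you are trying to prove.
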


Theorem~\ref{thm:sw char} implies that when $\beta$ is not rank-jumping, 
\[
\charC^L(M_A(\beta)) 
  = \sum_{\tau\in\Phi_A^L} \mu_A^{L,\tau} 
    \cdot \overline{C_A^{\tau}}, 
\]
and for each $\tau\in\Phi_A^L$, the multiplicity $\mu_A^{L,\tau}$ is computed in Theorem~\ref{thm:swFormula}.

A subset $\tau\subseteq A$ is called $F$-\emph{homogeneous} if the set of columns of $A$ indexed by $\tau$ lie in a common affine hyperplane off the origin.
For a subset $\tau\subseteq A$, let $\Delta_{\tau}=\conv(\tau\cup \{\boldzero\})\subseteq \RR^d$ denote the convex hull of the origin and all the columns of $\tau$. 

By \cite[Corollary 4.22 and Remark 4.23]{slopes}, 
\begin{equation}
\mu^{L,\emptyset}_A = \vol_{\ZZ^d}\left(\bigcup_{\tau'\in\Phi_A^{L,d-1}} \Delta_{\tau'}\setminus \conv(\tau')\right).
\label{eqn:non-F-homogeneous-mult} 
\end{equation}
Hence if all the facets of the $(A,L)$-umbrella are $F$-homogeneous, then 
\begin{equation}
\mu^{L,\emptyset}_A = \vol_{\ZZ^d}\left(\bigcup_{\tau'\in\Phi_A^{L,d-1}} \Delta_{\tau'}\right).
\label{eqn:F-homogeneous-mult} 
\end{equation}

\section{$F$-characteristic cycles of initial ideals are $L$-characteristic cycles}
Given any real vector $w\in\RR^n$ and any left ideal $J\subseteq D$, we can consider the initial ideal $\ini_{(-w,w)}(J)$ as defined in \cite{SST}. We recall that by \cite[Theorem 2.2.1]{SST}, 
if $M=D/J$ is a holonomic $D$-module, then so is  $\gr^{(-w,w)}(M)\defeq D/\ini_{(-w,w)}(J)$ and, moreover, 
\begin{equation}
\rank (\gr^{(-w,w)}M)\leq \rank (M).\label{eqn:wrank-leq-rank}
\end{equation}

On the other hand, by \cite[Lemma 2.1.6]{SST}), for any weight vector $(u,v)\in\RR^n$ and $L=(-w,w)+\epsilon (u,v)$ with $\epsilon>0$ small enough,
\begin{equation}\label{eq:gr=}
\gr^{(u,v)}(\gr^{(-w,w)}(M))=\gr^L(M). 
\end{equation}

\begin{lemma}\label{characteristic-cycles}
If $M=D/J$ is a holonomic $D$-module, then 
for $L$ chosen as in \eqref{eq:gr=} with $(u,v)=F$,  
\[
\charC^F (\gr^{(-w,w)}(M))=\charC^L (M).
\] 
\end{lemma}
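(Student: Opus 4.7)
The plan is to unpack the characteristic cycle into its two defining ingredients---a characteristic variety and the multiplicities along its irreducible components---and then to observe that both ingredients are already determined by the associated graded module appearing in equation \eqref{eq:gr=}.

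First, I would check that $L = (-w,w)+\epsilon F$ is a projective weight vector. Since $F_x + F_\del = \boldone_n$ and $(-w)+w = \boldzero_n$, one has $L_x+L_\del = \epsilon \boldone_n > 0$ for $\epsilon > 0$. Hence both $\gr^F D$ and $\gr^L D$ are canonically identified with the coordinate ring $\CC[x,\xi]$ of $T^*X$ (via $\xi_i = \ini(\del_i)$), so that $\gr^L(J)$ and $\gr^F(\ini_{(-w,w)}(J))$ may be compared as ordinary ideals inside this common polynomial ring. Moreover, by \cite[Theorem 2.2.1]{SST}, the quotient $\gr^{(-w,w)}(M) = D/\ini_{(-w,w)}(J)$ is again holonomic, so its $F$-characteristic cycle is well-defined in the first place.

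Writing $I = \ini_{(-w,w)}(J)$, equation \eqref{eq:gr=} becomes the isomorphism of $\CC[x,\xi]$-modules
\[
\CC[x,\xi]/\gr^F(I) \;=\; \gr^F(D/I) \;=\; \gr^L(D/J) \;=\; \CC[x,\xi]/\gr^L(J).
\]
In particular $\gr^F(I) = \gr^L(J)$ as ideals of $\CC[x,\xi]$, which immediately yields the equality $\charVar^F(\gr^{(-w,w)}(M)) = \charVar^L(M)$ along with identical irreducible decompositions. For any such component $C$ with defining prime $P_C \subseteq \CC[x,\xi]$, the two multiplicities
\[
\mu^{F,C}(\gr^{(-w,w)}(M)) = \ell\bigl((\gr^F(\gr^{(-w,w)}(M)))_{P_C}\bigr)
\quad\text{and}\quad
\mu^{L,C}(M) = \ell\bigl((\gr^L(M))_{P_C}\bigr)
\]
coincide by the same isomorphism, so summing over all $C$ yields the claimed equality of cycles.

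I do not anticipate a serious obstacle: the content of the statement is carried entirely by \eqref{eq:gr=} together with the fact that the two filtrations live in the same ambient polynomial ring $\CC[x,\xi]$. The only care needed is book-keeping when comparing ideals and primes inside $\CC[x,\xi]$ with respect to two different gradings but the same underlying commutative ring structure.
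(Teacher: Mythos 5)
Your proof is correct and is essentially the argument the paper leaves implicit (no proof is given for this lemma in the paper — it is treated as an immediate consequence of \eqref{eq:gr=}). You correctly unpack \eqref{eq:gr=} to the identity of ideals $\gr^F(\ini_{(-w,w)}(J)) = \gr^L(J)$ in $\CC[x,\xi]$, from which both the varieties and, via the equality of associated graded modules, the lengths at each minimal prime coincide.
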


The holonomic rank of $\ini_{(-w,w)}(M_A (\beta))$, a central object of study in \cite{SST}, equals the multiplicity $\mu^{L,\nothing}_{A,0}(\beta)$ for $L=(-w,w)+\epsilon F$ and $\epsilon>0$ small enough. Notice that, by the form of $L$, all the facets of $\Phi_A^L$ are $F$-homogeneous. We will see in \S\ref{sec:mults} that for any projective weight vector $L$, the multiplicity $\mu^{L,\nothing}_{A,0}(\beta)$ equals the rank of a Gr\"obner deformation of $M_A(\beta)$ (see Corollaries~\ref{cor:hypergeometric-converse} and~\ref{cor:non-F-homogeneous-L}).

%%%%%%%%%%%%%%%%%%%%%%%%%%%%%%%%%%%%%%
\section{Computing multiplicities in $L$-characteristic cycles}
%%%%%%%%%%%%%%%%%%%%%%%%%%%%%%%%%%%%%%
\label{sec:mults}

In this section, we use the approach of~\cite{berkesch} to compute the multiplicities in the $L$-characteristic cycles of Euler--Koszul homology modules of the toric ring $S_A$. 
We first recall some definitions  from~\cite{berkesch,BFM-parametric}. 

For a face $G\preceq A$, consider the union of the lattice translates 
\begin{align}\label{ranking-face-lattice}
\EE_G^\beta\defeq 
\big[\ZZ^d\cap(\beta+\CC G) \big]\minus(\NN A+\ZZ G) = \bigsqcup_{b\in B_G^\beta} (b+\ZZ G),
\end{align}
where $B_G^\beta$ is a set of lattice translate representatives. 
As such, $|B^\beta_{G}|$ is the number of translates of $\ZZ G$ appearing in $\bbE^\beta$, which is by definition equal to the difference between $[\ZZ^d\cap \QQ G:\ZZ G]$ and the number of translates of $\ZZ G$ along $\beta+\CC G$ that are contained in $\NN A + \ZZ G$.

For a face $\tau\in\Phi_A^L$ of the $(A,L)$-umbrella, let $\EE_\tau^\beta$ denote the union of the ranking lattices $\EE_G^\beta$, where $G\preceq A$ contains $\tau$. 

\begin{theorem}\label{thm:computeMults}
Let $L$ be a projective weight vector and $\tau\in\Phi_A^L$ be a face of the $(A,L)$-umbrella. 
For each $i$ and $\beta$, 
the multiplicity $\mu_{A,i}^{L,\tau}(\beta)$, 
which is the coefficient of $\ol{C}_A^\tau$ in the characteristic cycle $\charC^L(\HH_i(S_A,\beta))$ \emph{(}see~\eqref{eqn:muDef}\emph{)}, 
can be computed from the combinatorics of the ranking lattices at $\beta$ and the $(A,L)$-umbrella $\Phi_A^L$. 
More precisely, there is a spectral sequence involving the faces of $\Phi_A^L$ that contain $\tau$ and the ranking lattices in 
$\EE_\tau^\beta$, 
from which $\mu^{L,\tau}_{A,i}(\beta)$ can be computed.
\end{theorem}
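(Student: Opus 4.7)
The plan is to adapt the approach of \cite{berkesch} (where the holonomic rank case corresponds to $\mu_{A,0}^{F,\nothing}(\beta)$) to compute all multiplicities $\mu_{A,i}^{L,\tau}(\beta)$ in the $L$-characteristic cycles of $\HH_i(S_A,\beta)$. The main device is a ``ranking filtration'' of $S_A$ by shifts of saturated face semigroup rings $\widetilde{S}_G$, whose Euler--Koszul homology is controlled by Theorems~\ref{thm:HHvanishing} and~\ref{thm:higherHHvanishing} and whose $L$-characteristic cycles are described by Theorems~\ref{thm:sw char} and~\ref{thm:swFormula}.

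First, I would construct, following \cite{berkesch, BFM-parametric}, a bounded filtration of a weakly toric model for $S_A$ starting from the inclusion $S_A \hookrightarrow \widetilde{S}_A$. The cokernel has a $\ZZ^d$-graded composition series whose successive subquotients are $\ZZ^d$-graded translates of $S_G$ for proper faces $G \prec A$, with shifts indexed by representatives $b \in B_G^\beta$ of the ranking lattice $\EE_G^\beta$. Iterating the inclusion $S_G \hookrightarrow \widetilde{S}_G$ on each piece (using Theorem~\ref{thm:HHvanishing} to discard summands whose shifted parameter lies outside $\qdeg(\widetilde{S}_G) = \CC G$) produces a filtration whose associated graded is a direct sum of shifts $\widetilde{S}_G(b)$ for pairs $(G,b)$ indexed by the combined ranking data of $\beta$.

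Next, I would apply the Euler--Koszul functor $\HH_\bullet(-,\beta)$ to obtain a spectral sequence converging to $\HH_i(S_A,\beta)$. By Theorem~\ref{thm:higherHHvanishing}, each saturated piece $\widetilde{S}_G$ is Cohen--Macaulay, so only degree-zero Euler--Koszul homology contributes; combined with the shift identity~\eqref{eqn:shiftHH}, the $E_1$ page is a direct sum of modules $\HH_0(\widetilde{S}_G, \beta-b)(b)$ indexed by face/representative pairs. Applying the exact functor $\gr^L$ and localizing at the prime $P_\tau$ defining $\overline{C_A^\tau}$, one uses Theorem~\ref{thm:sw char} (applied to $\widetilde{S}_G$, viewed as an $A$-module via $G \preceq A$) to see that only summands with $\tau \in \Phi_G^L$, equivalently $G \succeq \tau$, survive. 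The resulting spectral sequence then involves precisely the faces of $\Phi_A^L$ containing $\tau$ and the representatives in $\EE_\tau^\beta = \bigcup_{G \succeq \tau} \EE_G^\beta$, and it converges to $\mu_{A,i}^{L,\tau}(\beta)$. The multiplicities on the surviving $E_1$ entries equal $\mu_G^{L,\tau}$, which is $\beta$-independent by~\eqref{eqn:genericmuDef} and given in closed form by Theorem~\ref{thm:swFormula}.

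The main obstacle I anticipate is constructing the ranking filtration so that it is genuinely compatible with $\gr^L$, i.e., so that passage to $L$-characteristic cycles commutes with the spectral sequence formation; this requires a careful inductive construction by codimension of faces, with degree shifts tracked exactly by the representative sets $B_G^\beta$. A secondary subtlety is verifying that after localization at $P_\tau$ the higher differentials in the spectral sequence can be read off from the poset of faces above $\tau$ together with the incidence data of the ranking lattices in $\EE_\tau^\beta$, so that the theorem's combinatorial description is honest; this should follow from the functoriality of the ranking filtration with respect to face inclusion, as established in \cite{berkesch, BFM-parametric}.
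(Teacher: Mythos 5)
Your plan differs from the paper's in a structural way, and I think that difference introduces real gaps.

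The paper's proof reduces to the ranking toric module $P_J^\beta$ in three steps: (i) the short exact sequence $0\to S_A\to S_A[\del^{-1}]\to Q_A\to 0$ together with Theorem~\ref{thm:higherHHvanishing} and~\eqref{eqn:genericmuDef} gives~\eqref{mu-Q}, shifting attention to $Q_A$; (ii) \cite[Proposition 5.10]{berkesch} then replaces $Q_A$ by the honest toric module $P_J^\beta$ built from the ranking lattices at $\beta$, giving~\eqref{mu-P-Q}; (iii) the spectral sequence comes from the \emph{cellular resolution}~\eqref{eq:cellularComplex} of $P_J^\beta$ supported on a standard simplex whose vertices are the maximal pairs in $J$ and whose higher faces carry the intersections $P^\beta_{F_s,J}$ with $F_s=\bigcap_{G\in s}G$. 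Applying Euler--Koszul to this co-chain complex gives a double complex, and one localizes at $P_\tau$ and applies $\gr^L$ at the end; since both are exact and length is additive, the bookkeeping is clean.

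Your proposal bypasses (i)--(ii): you begin with $S_A\hookrightarrow\widetilde S_A$ instead of $S_A\hookrightarrow S_A[\del^{-1}]$, and then claim a toric filtration whose associated graded pieces are $\widetilde S_G(b)$ indexed by representatives $b\in B^\beta_G$ of the ranking lattices $\EE_G^\beta$. This does not work as stated. A toric composition series of $\widetilde S_A/S_A$ has subquotients $S_G(b)$ (not saturated), and the shifts $b$ are dictated by the module structure of $\widetilde S_A/S_A$, not by the ranking data $\EE_G^\beta$ at $\beta$; the sets $B_G^\beta$ measure $\big[\ZZ^d\cap(\beta+\CC G)\big]\minus(\NN A+\ZZ G)$ and are invisible from $\widetilde S_A/S_A$ alone. ``Iterating $S_G\hookrightarrow\widetilde S_G$ on each piece'' also doesn't yield a filtration of the original module, since those pieces are subquotients, not subobjects. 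The step that connects toric structure to ranking data is exactly the passage to $P_J^\beta$ via $Q_A$, and that step is missing.

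The more significant structural gap is (iii). Your spectral sequence comes from a filtration, so its $E_1$ page is a direct sum of $\HH_0(\widetilde S_G,\beta-b)(b)$ for single faces. But the theorem's combinatorial content (see Remark~\ref{ex:2cmpts} and Corollaries~\ref{cor:not-in-any-codim2},~\ref{cor:uniqueCodim2}) requires inclusion--exclusion over the lattice of intersections of maximal faces. The paper encodes this by resolving $P_J^\beta$ with a cellular complex whose $p$-cochains are $\bigoplus_{|s|=p+1}P^\beta_{F_s,J}$; that simplicial structure is what produces the intersection terms $F_1\cap F_2$, the binomial coefficients in $C^\beta$, and so on. A filtration-induced spectral sequence with graded pieces $\widetilde S_G(b)$ does not recover this. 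Finally, your stated worry about compatibility with $\gr^L$ is not actually the hard part: in the paper's setup, $\gr^L$ and localization at $P_\tau$ are applied last to a double complex of $D$-modules, and exactness plus additivity of length handle everything. The hard part is obtaining the cellular resolution and identifying its terms as simple ranking modules, for which Lemma~\ref{lem:simpleRT} (with $\mu^{L,\tau}$ in place of rank) replaces \cite[Lemma 6.13]{berkesch}; your proposal does not supply an analogue of these ingredients.
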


Before proving Theorem \ref{thm:computeMults}, we state some consequences.

\begin{cor}\label{cor:L-CC-and-umbrella}
For all $\beta\in\CC^d$ and all projective weight vectors $L,L'$,
\[\charC^L (M_A(\beta))=\charC^{L'} (M_A(\beta)) \quad\text{if and only if}\quad  \Phi_A^L =\Phi_A^{L'}.
\]
\end{cor}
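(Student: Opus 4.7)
The plan is to deduce both directions of the biconditional as direct consequences of Theorems~\ref{thm:sw char} and~\ref{thm:computeMults}.

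For the implication $\Phi_A^L = \Phi_A^{L'} \Rightarrow \charC^L(M_A(\beta)) = \charC^{L'}(M_A(\beta))$, I would first observe that the variety $\overline{C_A^\tau}$ depends only on $A$ and $\tau$, not on the weight vector $L$; hence the two characteristic cycles are automatically supported on the same set of components by Theorem~\ref{thm:sw char} together with the hypothesis $\Phi_A^L = \Phi_A^{L'}$. To match the multiplicities, I would invoke Theorem~\ref{thm:computeMults}: each $\mu_{A,0}^{L,\tau}(\beta)$ is computed from a spectral sequence whose input data consist of (i) the faces of $\Phi_A^L$ containing $\tau$ and (ii) the ranking lattices in $\EE_\tau^\beta$, and the latter depends only on $A$, $\tau$, and $\beta$. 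Consequently, when $\Phi_A^L = \Phi_A^{L'}$, the spectral sequences computing $\mu_{A,0}^{L,\tau}(\beta)$ and $\mu_{A,0}^{L',\tau}(\beta)$ take identical inputs and produce identical multiplicities for every $\tau$.

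For the converse, I would take the support of both (equal) characteristic cycles, obtaining $\charVar^L(M_A(\beta)) = \charVar^{L'}(M_A(\beta))$. Theorem~\ref{thm:sw char} then presents each side as an irreducible decomposition indexed by $\Phi_A^L$ and $\Phi_A^{L'}$, respectively. Since the assignment $\tau \mapsto \overline{C_A^\tau}$ is injective---the set $\tau$ is recovered from $\overline{C_A^\tau}$ as the indices of the $\xi$-coordinates that do not vanish generically on the component---the two indexing sets must coincide, yielding $\Phi_A^L = \Phi_A^{L'}$.

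The main subtle point, and the only place where I expect care is needed, is to confirm that every $\tau \in \Phi_A^L$ contributes to $\charC^L(M_A(\beta))$ with \emph{strictly positive} multiplicity, so that no face of the umbrella is erased when one passes to the characteristic cycle. This follows immediately from the definition $\mu_{A,0}^{L,\tau}(\beta) = \ell\bigl((\gr^L(M_A(\beta)))_{P_\tau}\bigr)$: the length of a module at the local ring of a generic point of a component of its support is automatically strictly positive. With this positivity in hand, the passage from $\charC^L(M_A(\beta))$ to $\Phi_A^L$ loses no information, closing both directions of the argument.
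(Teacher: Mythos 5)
Your proposal is correct and follows essentially the same route as the paper's (very terse) proof: the ``only if'' direction from the decomposition of $\charVar^L(M_A(\beta))$ in Theorem~\ref{thm:sw char} together with the injectivity of $\tau\mapsto\overline{C_A^\tau}$ and positivity of multiplicities along components of the support, and the ``if'' direction from Theorem~\ref{thm:computeMults} (whose spectral-sequence inputs, via Theorem~\ref{thm:swFormula} and Lemma~\ref{lem:simpleRT}, depend on $L$ only through $\Phi_A^L$). Your explicit remarks on injectivity and strict positivity usefully fill in details the paper leaves implicit.
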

\begin{proof}
While the only if direction follows from Theorem \ref{thm:sw char}, the if direction uses 
Theorems \ref{thm:swFormula}, \ref{thm:sw char}, and \ref{thm:computeMults}.
\end{proof}

\begin{cor}\label{cor:hypergeometric-converse}
For any projective weight vector $L=(u,v)$ on $D$ such that all the facets of $\Phi_A^L$ are $F$-homogeneous, 
\[
\charC^F(\gr^{(-v,v)}(M_A (\beta)))=\charC^L(M_A (\beta)).
\] In particular, $\rank(\gr^{(-v,v)}(M_A (\beta)))=\mu_{A,0}^{L,\nothing}(\beta)$.
\end{cor}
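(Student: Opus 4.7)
The plan is to combine Lemma \ref{characteristic-cycles} with Corollary \ref{cor:L-CC-and-umbrella}, reducing the first claim to a combinatorial identity of umbrellas. Since $M_A(\beta)$ is holonomic \cite{GGZ,adolphson}, Lemma \ref{characteristic-cycles} applied with $w=v$ yields
\[
\charC^F\bigl(\gr^{(-v,v)}(M_A(\beta))\bigr)=\charC^{L^\ast}(M_A(\beta)),
\]
where $L^\ast\defeq(-v,v)+\epsilon F=(-v,\,v+\epsilon\boldone_n)$ for $\epsilon>0$ sufficiently small; $L^\ast$ is projective because $L^\ast_x+L^\ast_\del=\epsilon\boldone_n$. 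By Corollary \ref{cor:L-CC-and-umbrella}, it then suffices to prove $\Phi_A^{L^\ast}=\Phi_A^L$ under the $F$-homogeneity hypothesis.

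For this, a facet $\tau\in\Phi_A^L$ is characterized by the existence of $\omega\in\RR^d$ with $\omega\cdot a_j=v_j$ for $j\in\tau$ and $\omega\cdot a_k<v_k$ for $k\notin\tau$, obtained from the supporting hyperplane of the corresponding face of the cone generated by $\{(1,\boldzero_d)\}\cup\{(v_j,a_j)\}_{j=1}^n$, normalized so that its $y_0$-coefficient is $-1$. The $F$-homogeneity of $\tau$ supplies $\eta\in\RR^d$ and $c\neq 0$ with $\eta\cdot a_j=c$ for all $j\in\tau$, so the adjusted functional $\omega^\ast\defeq\omega+(\epsilon/c)\eta$ satisfies $\omega^\ast\cdot a_j=v_j+\epsilon$ on $\tau$, and the strict inequalities on the complement persist for $\epsilon$ small; hence $\tau\in\Phi_A^{L^\ast}$, giving $\Phi_A^L\subseteq\Phi_A^{L^\ast}$. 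Because the shift $v\mapsto v+\epsilon\boldone_n$ is a uniform translation of heights in the lift $\{(a_j,v_j)\}\subset\RR^{d+1}$, the umbrella $\Phi_A^{L^\ast}$ can only \emph{refine} $\Phi_A^L$; a genuine refinement of a facet $\tau$ would demand the constant $\epsilon$ to avoid the image of the evaluation map $\RR^d\to\RR^{|\tau|}$, $\alpha\mapsto(\alpha\cdot a_j)_{j\in\tau}$---precisely the failure of $F$-homogeneity. Under the hypothesis no refinement occurs, so $\Phi_A^{L^\ast}=\Phi_A^L$, and Corollary \ref{cor:L-CC-and-umbrella} yields $\charC^L(M_A(\beta))=\charC^{L^\ast}(M_A(\beta))$, completing the main identity.

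The ``in particular'' claim follows by matching coefficients of the zero section: the rank of any holonomic $D$-module equals the coefficient of $T_X^\ast X=\overline{C_A^\nothing}$ in its $F$-characteristic cycle \cite[Theorem 1.4.19]{SST}, while \eqref{eqn:muDef} together with $\HH_0(S_A,\beta)=M_A(\beta)$ identifies the corresponding coefficient in $\charC^L(M_A(\beta))$ with $\mu_{A,0}^{L,\nothing}(\beta)$. The main obstacle is the combinatorial step $\Phi_A^{L^\ast}=\Phi_A^L$: while the construction of $\omega^\ast$ immediately delivers the forward inclusion, the reverse requires a careful convex-geometric argument to verify that, under $F$-homogeneity, the uniform height shift cannot strictly refine any facet of the original umbrella, ruling out spurious new cells in $\Phi_A^{L^\ast}$.
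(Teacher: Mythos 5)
Your proof is correct and takes essentially the same route as the paper: apply Lemma \ref{characteristic-cycles} with $w = v$ to get $\charC^F(\gr^{(-v,v)}(M_A(\beta))) = \charC^{L^\ast}(M_A(\beta))$ with $L^\ast = (-v, v+\epsilon\boldone_n)$, then reduce to showing $\Phi_A^{L^\ast} = \Phi_A^L$ and invoke Corollary~\ref{cor:L-CC-and-umbrella}. The only structural difference is how this umbrella identity is handled. The paper splits it in two: it asserts $\Phi_A^L = \Phi_A^{L+\epsilon F}$ from the $F$-homogeneity hypothesis (without proof), and then notes that $L+\epsilon F$ and $L^\ast$ have the same $\delta$-coordinates $v+\epsilon\boldone_n$, so their umbrellas agree because the umbrella depends only on $L_\del$. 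You instead compare $\Phi_A^{L^\ast}$ to $\Phi_A^L$ directly and supply the convex-geometric argument the paper elides: the $F$-homogeneity functional $\eta$ lets you shift each supporting functional $\omega$ by $(\epsilon/c)\eta$ to track the uniform height shift $v \mapsto v+\epsilon\boldone_n$, and a facet $\tau$ can only split after the shift if $\boldone_{|\tau|}$ lies outside the image of $\alpha \mapsto (\alpha\cdot a_j)_{j\in\tau}$ --- which is exactly the negation of $F$-homogeneity of $\tau$. This is a genuine (and correct) filling-in of a step the paper treats as obvious. Two minor cosmetic points: your inequality $\omega\cdot a_k < v_k$ for $k\notin\tau$ may be sign-reversed depending on the convention implicit in the $H_\varepsilon$-chart of Schulze--Walther, though the argument is symmetric under that flip; and the phrase ``avoid the image'' reads oddly since the image is a linear subspace, so $\epsilon\boldone$ is in it for all $\epsilon$ or for none --- what you mean is simply that $\boldone_{|\tau|}$ lies in the image iff $\tau$ is $F$-homogeneous.
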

\begin{proof}
Let $\epsilon>0$ be as small as necessary in the sequel. Notice first that $\charC^F(\gr^{(-v,v)}(M_A (\beta)))=\charC^{L_\epsilon}(M_A (\beta))$ for $L_\epsilon\defeq (-v,v)+ \epsilon F$ by Lemma \ref{characteristic-cycles}.
Moreover, by the assumption on the $(A,L)$-umbrella, we have $\Phi_A^L=\Phi_A^{L+\epsilon F}$. On the other hand, the last $n$ coordinates of $L+\epsilon F$ and $L_\epsilon$ are equal to $v+\epsilon \cdot\boldone_n$, and hence $\Phi_A^{L}=\Phi_A^{L\epsilon}$. Thus, the result follows from Corollary \ref{cor:L-CC-and-umbrella}.
\end{proof}

As a particular case of Corollary \ref{cor:hypergeometric-converse}, the characteristic cycles, and hence the ranks, of the modules $\gr^{(-\boldone_n,\boldone_n)}(M_A (\beta))$ and $M_A(\beta)$ are equal.
We next show that~\cite[Corollary 3.2.14]{SST} holds with weakened hypotheses. 

\begin{cor}\label{cor:smallGFan}
For any $\beta\in\CC^d$ and any (not necessarily homogeneous) $A$, the small Gr\"obner fan of the hypergeometric ideal $H_A (\beta)$ refines the secondary fan of $A$. 
\end{cor}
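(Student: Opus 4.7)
The plan is to reduce to Sturmfels's classical theorem that the Gröbner fan of the toric ideal $I_A\subseteq\CC[\del]$ coincides with the secondary fan of $A$.  Concretely, I will show that the small Gröbner fan of $H_A(\beta)$ refines the Gröbner fan of $I_A$: if $\ini_{(-w,w)}(H_A(\beta))=\ini_{(-w',w')}(H_A(\beta))$, then $\ini_w(I_A)=\ini_{w'}(I_A)$, after which Sturmfels places $w,w'$ in a common cone of the secondary fan.

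The key step is the canonical identification
\[
\ini_{(-w,w)}(H_A(\beta))\cap\CC[\del]=\ini_w(I_A).
\]
The inclusion $\supseteq$ is immediate, since $\ini_{(-w,w)}(p)=\ini_w(p)\in\CC[\del]$ for every $p\in I_A$.  For the reverse inclusion one must show that any $x$-free $(-w,w)$-initial form of an element of $H_A(\beta)$ already arises from a toric relation; this is non-trivial because the Euler generators $E-\beta$ carry essential $x$-dependence, but one can verify, by analyzing how $(-w,w)$-initial forms of $D$-multiples of $E-\beta$ decompose, that the only polynomial-in-$\del$ contributions come from elements of $I_A$.  Alternatively, one can pass via Lemma~\ref{characteristic-cycles} to the commutative characteristic ideal $\ini_L(H_A(\beta))\subseteq\CC[x,\xi]$ for $L=(-w,w)+\epsilon F$ with $\epsilon>0$ small, and exploit the explicit description of the components and multiplicities of $\charC^L(M_A(\beta))$ (Theorems~\ref{thm:sw char} and~\ref{thm:computeMults}) to recover $\ini_w(I_A)$ as a canonical invariant.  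Either route yields $\ini_w(I_A)=\ini_{w'}(I_A)$ from the assumed equality of Weyl algebra initial ideals.

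The main obstacle is precisely this reverse inclusion in the non-homogeneous setting: verifying that no ``hidden'' polynomial relations enter $\ini_{(-w,w)}(H_A(\beta))\cap\CC[\del]$ beyond $\ini_w(I_A)$, despite the Euler operators.  In the homogeneous case this is essentially \cite[Corollary~3.2.14]{SST}; for general $A$ one can either adapt that argument using the $(A,L)$-umbrella machinery developed earlier in the paper, or $F$-homogenize $A$ by adjoining a row of ones, apply the homogeneous result, and dehomogenize the small Gröbner fan and the secondary fan compatibly.
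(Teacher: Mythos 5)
Your plan hinges entirely on the identity $\ini_{(-w,w)}(H_A(\beta))\cap\CC[\del]=\ini_w(I_A)$, and you never prove the nontrivial inclusion $\subseteq$; you explicitly flag it as ``the main obstacle'' and leave it at ``one can verify\dots''. That inclusion is precisely where all the difficulty of the non-homogeneous (and rank-jumping) case lives: for special $\beta$ the initial ideal $\ini_{(-w,w)}(H_A(\beta))$ is strictly larger than the fake initial ideal $D\cdot\ini_w(I_A)+D\cdot\<E-\beta\>$, and ruling out extra $x$-free initial forms is not a routine bookkeeping of ``how initial forms of $D$-multiples of $E-\beta$ decompose.'' Your two fallback suggestions do not close this gap either: the homogenize-then-dehomogenize route changes both the system and the point configuration, so the two small Gr\"obner fans and the two secondary fans would each have to be matched up, which is itself a substantial claim; and the characteristic-cycle route cannot ``recover $\ini_w(I_A)$ as a canonical invariant'' of $\charC^L(M_A(\beta))$ --- the characteristic cycle only sees the umbrella $\Phi_A^L$, i.e.\ the regular subdivision induced by $w$, which is strictly less information than $\ini_w(I_A)$ (the Gr\"obner fan of $I_A$ genuinely refines the secondary fan, often properly). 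So as written the argument has a genuine gap at its central step, and you are in addition aiming to extract a stronger invariant than is available from the tools you invoke.

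For comparison, the paper deliberately avoids computing $\ini_{(-w,w)}(H_A(\beta))\cap\CC[\del]$ altogether. Given generic $w,w'$ with $\gr^{(-w,w)}(M_A(\beta))=\gr^{(-w',w')}(M_A(\beta))$, it notes that the facets of $\Phi_A^L$ for $L=(-w+c\boldone_n,w)$ are automatically $F$-homogeneous, so Corollary~\ref{cor:hypergeometric-converse} (which rests on Lemma~\ref{characteristic-cycles} and the full multiplicity computation of Theorem~\ref{thm:computeMults} via Corollary~\ref{cor:L-CC-and-umbrella}) identifies $\charC^F$ of each Gr\"obner deformation with $\charC^L(M_A(\beta))$ and $\charC^{L'}(M_A(\beta))$ respectively; equality of the deformations then forces $\charC^L(M_A(\beta))=\charC^{L'}(M_A(\beta))$, hence $\Phi_A^L=\Phi_A^{L'}$ by Corollary~\ref{cor:L-CC-and-umbrella}, and equality of umbrellas is exactly the statement that $w$ and $w'$ lie in the same secondary cone. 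In other words, the paper extracts only the subdivision data from the initial ideal --- which is weaker than $\ini_w(I_A)$ but exactly sufficient --- and it is the new multiplicity formula of Theorem~\ref{thm:computeMults} that makes this extraction possible for arbitrary $\beta$. If you want to salvage your route, you would need an honest proof of the reverse inclusion valid for rank-jumping $\beta$ and non-homogeneous $A$; absent that, the characteristic-cycle detour is not an ``alternative'' but the actual content of the proof.
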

\begin{proof}
It suffices to see that each open cone of the small Gr\"obner fan of $H_A (\beta)$ is contained in an open cone of the secondary fan of $A$. Since such an open cone corresponds to a Gr\"obner deformation with respect to a  generic weight vector $w\in\RR^n$, it follows that $L=(-w+c \cdot \boldone_n,w)$ is a projective weight vector for any $c>0$ and $\Phi_A^L$, which only depends on $w$, has only $F$-homogeneous facets. Thus, beginning with generic vectors $w,w'$ with 
\[
\gr^{(-w,w)}(M_A (\beta))=\gr^{(-w',w')}(M_A (\beta)),
\]  
Corollaries~\ref{cor:L-CC-and-umbrella} and~\ref{cor:hypergeometric-converse} imply that $\Phi_A^L =\Phi_A^{L'}$ where the last coordinates of $L$ and $L'$ are $w$ and $w'$ respectively. This means that $w$ and $w'$ belong to the same cone of the secondary fan of $A$.
\end{proof}

\begin{cor}\label{cor:non-F-homogeneous-L}
Any projective weight vector $L=(u,v)$ on $D$ has a perturbation $L'$ such that all the facets of the $(A,L')$-umbrella $\Phi_A^{L'}$ are $F$-homogeneous and $\mu_{A,0}^{L,\nothing}(\beta)=\mu_{A,0}^{L',\nothing}(\beta)$. 
\end{cor}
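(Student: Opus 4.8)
The plan is to resolve the non-$F$-homogeneous facets of $\Phi_A^L$ one at a time, by pushing the umbrella surface \emph{inward}, toward the origin, across each such facet, and to check that this does not change the coefficient of $\ol{C}_A^\nothing=T^*_XX$ in $\charC^L(M_A(\beta))$.

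\emph{Reduction to the umbrella.} First I would invoke Theorem~\ref{thm:computeMults}: the multiplicity $\mu^{L,\nothing}_{A,0}(\beta)$ is computed from $\Phi_A^L$ together with the ranking lattices $\EE^\beta_G$ for $G\preceq A$, and the latter are independent of $L$; so it suffices to produce a perturbation $L'$ of $L$ such that every facet of $\Phi_A^{L'}$ is $F$-homogeneous and such that the data extracted at the face $\nothing$ by the spectral sequence of Theorem~\ref{thm:computeMults} is the same for $\Phi_A^{L'}$ as for $\Phi_A^L$. I would also record the combinatorial observation that a facet $\sigma\in\Phi_A^{L,d-1}$ is $F$-homogeneous exactly when $\dim\conv(\sigma)=d-1$ (the columns of a facet span $\RR^d$, so as soon as they are affinely dependent they lie on an affine hyperplane missing $\0$); in particular a non-$F$-homogeneous $\tau'$ is a ``circuit-type'' degeneration, occurring only when $L_\del$ lies on one of finitely many hyperplanes in $\QQ^n$ cut out by the vanishing of a minor built from the lifted columns.

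\emph{The inward resolution.} Given a non-$F$-homogeneous facet $\tau'$, pointedness of $\RR_{\geq0}A$ forces $\0\notin\conv(\tau')$, so $\0$ is a vertex of $\Delta_{\tau'}=\conv(\tau'\cup\{\0\})$; let $\mathcal{V}(\tau')$ be the set of facets of $\conv(\tau')$ visible from $\0$. Each $G\in\mathcal{V}(\tau')$ lies on a hyperplane strictly separating it from $\0$, hence is $F$-homogeneous; the cones $\RR_{\geq0}G$ with $G\in\mathcal{V}(\tau')$ subdivide $\RR_{\geq0}\tau'$; and comparing rays through $\0$ gives, up to a set of measure zero,
\[
\Delta_{\tau'}\setminus\conv(\tau')\;=\;\bigcup_{G\in\mathcal{V}(\tau')}\Delta_G.
\]
Replacing each non-$F$-homogeneous $\tau'$ in $\Phi_A^L$ by the subcomplex generated by $\mathcal{V}(\tau')$ produces a complex all of whose facets are $F$-homogeneous. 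The crucial claim is that this \emph{inward resolution} is coherent — applying a projective automorphism of $\PP^d_\QQ$ moving $[1:\boldzero_d]$ to the hyperplane at infinity turns ``visible from $\0$'' into ``lower'', so the resolution is induced by a height function — and therefore equals $\Phi_A^{L'}$ for a projective weight vector $L'$ arbitrarily close to $L$, obtained by raising the entries of $L_\del$ at the columns of $\conv(\tau')$ lying on no $G\in\mathcal V(\tau')$ (and fitting the others to the functionals of the $G$), leaving the remaining entries unchanged and adjusting $L'_x$ to keep $L'$ projective. I would stress that the inward choice is forced: crossing a different wall replaces $\tau'$ by $F$-homogeneous facets $\sigma$ whose associated cones $\Delta_\sigma$ still sweep out \emph{all} of $\Delta_{\tau'}$ rather than only $\Delta_{\tau'}\setminus\conv(\tau')$, strictly enlarging the $\nothing$-multiplicity.

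\emph{Equality of multiplicities.} For $\beta$ not rank-jumping the claim is then immediate: by~\eqref{eqn:non-F-homogeneous-mult} for $\Phi_A^L$ and~\eqref{eqn:F-homogeneous-mult} for $\Phi_A^{L'}$ the displayed identity shows $\bigcup_{\tau'\in\Phi_A^{L,d-1}}(\Delta_{\tau'}\setminus\conv(\tau'))$ and $\bigcup_{\sigma\in\Phi_A^{L',d-1}}\Delta_\sigma$ agree up to measure zero, whence $\mu^{L',\nothing}_A=\mu^{L,\nothing}_A$. For arbitrary $\beta$ I would feed the resolution $\Phi_A^{L'}\to\Phi_A^L$ into the spectral sequence of Theorem~\ref{thm:computeMults} at $\tau=\nothing$: both are built from the same lattices $\EE^\beta_\nothing=\bigcup_{G\preceq A}\EE^\beta_G$, and since each facet of $\mathcal V(\tau')$ lies in the smallest face of $A$ containing $\tau'$ the resolution respects the filtration of the umbrella by faces of $A$, so a diagram chase should identify the $\nothing$-contributions of the two $E_\infty$-pages; alternatively, once $\Phi_A^{L'}$ has only $F$-homogeneous facets, Corollary~\ref{cor:hypergeometric-converse} rewrites $\mu^{L',\nothing}_{A,0}(\beta)=\rank(\gr^{(-v',v')}(M_A(\beta)))$ and one can try to match this with $\mu^{L,\nothing}_{A,0}(\beta)$ via the semicontinuity results of \S\ref{sec:upperSC}. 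The step I expect to be the main obstacle is exactly this last one — showing, for \emph{every} $\beta$ and not merely generically, that the spectral sequence at $\nothing$ cannot distinguish $\tau'$ from $\mathcal V(\tau')$ — together with the more routine point of verifying that the inward resolution is genuinely coherent and can be realized by an \emph{arbitrarily small} perturbation without disturbing the $F$-homogeneous facets of $\Phi_A^L$.
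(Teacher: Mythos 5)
Your geometric picture coincides with the paper's: each non-$F$-homogeneous facet $\tau'$ of $\Phi_A^L$ is replaced by the facets of $\conv(\tau')$ visible from the origin (equivalently, the facets of $\conv(\tau')$ that are not facets of $\Delta_{\tau'}$), and the identity $\Delta_{\tau'}\setminus\conv(\tau')=\bigcup_{G\in\mathcal{V}(\tau')}\Delta_G$ together with \eqref{eqn:non-F-homogeneous-mult} for $L$ and \eqref{eqn:F-homogeneous-mult} for $L'$ gives $\mu_A^{L,\nothing}=\mu_A^{L',\nothing}$. However, the two points you flag as the remaining obstacles are genuine gaps in what you wrote, and both close more cheaply than you anticipate. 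For coherence and smallness of the perturbation there is no need to fit a height function facet by facet: the single global perturbation $L'=L+\epsilon(\boldone_n,-\boldone_n)$, with $\epsilon>0$ small enough that the umbrella stabilizes, is automatically a projective weight vector, leaves every $F$-homogeneous facet of $\Phi_A^L$ untouched, and replaces each non-$F$-homogeneous $\tau'$ by precisely the visible facets; this is the perturbation the paper uses.

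For arbitrary (possibly rank-jumping) $\beta$, your plan to ``identify the $\nothing$-contributions of the two $E_\infty$-pages'' by a diagram chase is the right instinct but is never needed, and your fallback via Corollary~\ref{cor:hypergeometric-converse} plus the semicontinuity results of \S\ref{sec:upperSC} would not work (semicontinuity only yields inequalities). The observation that closes the gap is that, by Lemma~\ref{lem:simpleRT}, the only way $L$ enters the $\tau=\nothing$ computation of Theorem~\ref{thm:computeMults} is through the numbers
$\mu_{A,q}^{L,\nothing}(P_G^\beta,\beta)=|B_G^\beta|\cdot\binom{\codim(G)}{q}\cdot\mu_G^{L,\nothing}$
attached to the faces $G\preceq A$: the cellular complex \eqref{eq:cellularComplex}, the index set $\mathcal{J}(\beta)$, and the ranking lattices are all independent of $L$. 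Hence it suffices to prove $\mu_G^{L,\nothing}=\mu_G^{L',\nothing}$ for \emph{every} face $G\preceq A$, which is exactly your volume identity applied to $\Phi_G^L$ (using Theorem~\ref{thm:swFormula} and \eqref{eqn:non-F-homogeneous-mult}--\eqref{eqn:F-homogeneous-mult} with $G$ in place of $A$). Identical inputs to the spectral sequence then give identical outputs, so $\mu_{A,0}^{L,\nothing}(\beta)=\mu_{A,0}^{L',\nothing}(\beta)$ for all $\beta$ without any comparison of resolutions.
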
 
\begin{proof}
If $L'(\epsilon)\defeq L+\epsilon (\boldone_n,-\boldone_n)$ for $\epsilon>0$, then there is an $\epsilon_0>0$ such that the $L'(\epsilon)$-umbrella is constant for $\epsilon \in (0,\epsilon_0]$. Thus, if we fix $L'=L'(\epsilon_0)$, then all the facets of $\Phi_A^{L'}$ are $F$-homogeneous. Moreover, by the choice of $L'$, any $F$-homogeneous facet of $\Phi_A^L$ is a facet of $\Phi_A^{L'}$, while each non-$F$-homogeneous facet $\tau$ of $\Phi_A^L$ is replaced in $\Phi_A^{L'}$ 
by the set of facets of $\Phi_{\tau}^{L''}$, where $L''\defeq (c\boldone_n,-\boldone_n)$ is a projective weight vector for any $c>1$. This latter set is the set of facets of $\conv(\tau)$ that are not facets of $\Delta_\tau$. This proves that $\mu_{A}^{L,\nothing}=\mu_{A}^{L',\nothing}$ by using~\eqref{eqn:non-F-homogeneous-mult} to compute $\mu_A^{L,\nothing}$ and \eqref{eqn:F-homogeneous-mult} to compute $\mu_A^{L',\nothing}$. Analogously, $\mu_{G}^{L,\nothing}=\mu_{G}^{L',\nothing}$ for any face $G\preceq A$. Finally, the result follows from previous equality and Theorem \ref{thm:computeMults}.
\end{proof}

\begin{cor}\label{cor:upperBound}
Given any projective weight vector $L$ and $\beta\in\CC^d$,
\[
\mu_{A,0}^{L,\nothing}(\beta)\leq \rank (M_A (\beta))\leq 4^{(d+1)}\vol (A).
\]
\end{cor}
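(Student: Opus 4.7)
The plan is to establish the two inequalities separately, each of which follows from the machinery developed in the prior corollaries together with a cited rank bound.

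For the lower bound $\mu_{A,0}^{L,\nothing}(\beta)\leq \rank(M_A(\beta))$, I would first apply Corollary~\ref{cor:non-F-homogeneous-L} to obtain a perturbation $L'=(u',v')$ of $L$ such that every facet of the $(A,L')$-umbrella $\Phi_A^{L'}$ is $F$-homogeneous and such that $\mu_{A,0}^{L,\nothing}(\beta)=\mu_{A,0}^{L',\nothing}(\beta)$. Since the hypothesis on $\Phi_A^{L'}$ is precisely what Corollary~\ref{cor:hypergeometric-converse} requires, the latter identifies this common value with the holonomic rank of the Gr\"obner deformation $\gr^{(-v',v')}(M_A(\beta))$. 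Combining these two equalities with the standard inequality \eqref{eqn:wrank-leq-rank} from \cite{SST}, which bounds the holonomic rank of a Gr\"obner deformation by the rank of the original holonomic module, yields the first inequality.

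For the second inequality $\rank(M_A(\beta))\leq 4^{d+1}\vol(A)$, I would invoke the explicit bound on the holonomic rank of an $A$-hypergeometric system established in \cite{berkesch}, which holds uniformly in $\beta\in\CC^d$ (and in particular at rank-jumping parameters, which is the only case in which the bound $\vol_{\ZZ^d}(A)$ can fail).

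No real obstacle is expected, since the result is assembled directly from the preceding corollaries and the cited rank bound; the only care required is to note that Corollaries~\ref{cor:non-F-homogeneous-L} and~\ref{cor:hypergeometric-converse} may be chained on the same perturbation $L'$, so that the value $\mu_{A,0}^{L,\nothing}(\beta)$ is transferred intact to a rank of a Gr\"obner deformation before \eqref{eqn:wrank-leq-rank} is applied.
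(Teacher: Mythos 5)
Your argument is essentially identical to the paper's: the first inequality is obtained by chaining Corollary~\ref{cor:non-F-homogeneous-L} with Corollary~\ref{cor:hypergeometric-converse} on the same perturbation $L'$ and then applying \eqref{eqn:wrank-leq-rank}, and the second is a direct citation. The one slip is the attribution of the bound $4^{d+1}\vol(A)$: the paper takes it from \cite[Corollary 6.2]{BFM-parametric}, not from \cite{berkesch}.
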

\begin{proof}
The first inequality is a consequence of~\eqref{eqn:wrank-leq-rank} and Corollaries~\ref{cor:non-F-homogeneous-L} and~\ref{cor:hypergeometric-converse}. 
The second is~\cite[Corollary 6.2]{BFM-parametric}. 
\end{proof}

To prove Theorem~\ref{thm:computeMults}, we will follow the approach used to compute the rank of an $A$-hypergeometric system from~\cite{berkesch} (see also~\cite{BFM-parametric}). 
We will use the set
\[ \phantom{xxx}
  \cC_A(\beta) \defeq
\ZZ^d\cap (\Re\,\beta +\RR_{\geq 0}A). 
\]
Given a subset 
\begin{align}\label{eqn:J}
J\subseteq \mathcal{J}(\beta)\defeq\{(G,b)\mid G\preceq A,\, b\in B_G^\beta,\, \EE_G^\beta\neq\varnothing\}, 
\end{align}
define  
\begin{align*}
\EE_J^\beta 
  \defeq \bigcup_{(G,b)\in J} (b+\ZZ G)
  \qquad\text{and}\qquad 
\PP_J^{\beta} 
  \defeq \cC_A(\beta) \cap \EE_J^{\beta}. 
\end{align*}
Now define the respective sets and $S_A$-modules 
\begin{align*}
\TT^{\beta} 
  \defeq \NN A\cup\left[
  \cC_A(\beta)\cap 
  \bigcup_{\substack{b\in\PP^\beta}}(b+\widetilde{\NN A})\right]&,
  \quad 
  &T^{\beta} &\defeq \CC\{\TT^{\beta}\},\\
  \SS_J^{\beta}  
    \defeq \TT^{\beta}\setminus\PP_J^{\beta}&,\quad
  &S_J^{\beta} 
	&\defeq \CC\{\SS_J^{\beta}\},
	\qquad \text{and}\qquad
  P_J^{\beta} \defeq  \dfrac{T^{\beta}}{S_J^{\beta}}.
\end{align*}
The degree set of $P_J^\beta$ is $\deg(P_J^{\beta})=\PP_J^{\beta}$. 
If a toric module $N$ is isomorphic to $P_J^{\beta}$ for some 
$J\subseteq \mathcal{J}(\beta)$ and $\beta$,
then we say that $N$ is a \emph{ranking toric module} determined by $J$.
A \emph{simple ranking toric module} is a module isomorphic to $P_{G,J}^\beta \defeq P_{J(G)}^\beta$, where $G\preceq A$ is a fixed face of $A$ such that $\EE_G^\beta\neq\varnothing$ and 
\[
J(G) \defeq \{(G,b)\in J\mid b\in B_G^\beta\}.
\] 
When $J=\mathcal{J}(\beta)$, we suppress it from the notation and write 
$P^\beta$ (and $P_G^\beta$) in place of $P_J^\beta$ (and $P_{G,J}^\beta$, respectively). If $(G,b)\in J$ and there is not any other pair $(G',b')\in J$ such that $b+\ZZ G \subsetneq b' +\ZZ G '$ we say that $(G,b)$ is a maximal pair in $J$. 
We denote by $\max(J)$ the set of all maximal pairs in $J$.

\begin{lemma}\label{lem:simpleRT}
If $G\preceq A$ and $\tau\in\Phi^L_A$, then the multiplicity $\mu^{L,\tau}_{A,q}(P^\beta_G,\beta)$ of the simple ranking toric module $P^\beta_G$ is
\[
\mu^{L,\tau}_{A,q}(P^\beta_G,\beta) = |B^\beta_{G}|\cdot
\mu^{L,\tau}_{A,q}(\widetilde{S}_G (b),\beta) =
\begin{cases}
|B^\beta_{G}|\cdot \binom{\codim(G)}{q} \cdot \mu^{L,\tau}_G &\text{if $\tau \subseteq G$},\\
0 & \text{otherwise}.
\end{cases}
\] for any $b\in B^\beta_{G}$.
\end{lemma}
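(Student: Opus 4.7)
The plan is to split the $A$-Euler--Koszul complex of a suitable $G$-module along the face $G$, reducing to a $G$-Euler--Koszul computation where the Cohen--Macaulay property of $\widetilde{S}_G$ forces higher homology to vanish, and then multiply back by a binomial factor from a trivial Koszul piece. After an invertible $\QQ$-linear change of basis on $\ZZ^d$ (which only relabels Euler operators and $\beta$), I may assume that the first $\dim(\CC G)$ rows of $A$ form a full-rank submatrix on the columns of $G$, while the last $\codim(G)$ rows vanish on $G$.

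The central step is a Künneth-type splitting. For any $\ZZ^d$-graded $\CC[\del]$-module $N$ satisfying $\del_j N = 0$ for $j \in G^c$ and $\deg(N) \subseteq \beta + \CC G$, a direct computation with the twisted action shows that each of the last $\codim(G)$ Euler operators $(E_k - \beta_k) \circ$ acts on a homogeneous element of degree $\gamma$ as the scalar $\gamma_k - \beta_k$, which vanishes because $\gamma - \beta \in \CC G$. The $A$-Euler--Koszul complex $\KK^A_\bullet(N,\beta)$ therefore factors as the tensor product of the Koszul complex on the first $\dim(\CC G)$ twisted Euler operators and a trivial Koszul complex on $\codim(G)$ zero endomorphisms, the latter contributing the binomial factor $\binom{\codim(G)}{r}$ in degree $r$. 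The identification $D \otimes_{\CC[\del]} N \cong \CC[x_{G^c}] \otimes_\CC (D_G \otimes_{\CC[\del_G]} N)$ further matches the first factor with $\CC[x_{G^c}] \otimes_\CC \KK^G_\bullet(N, \beta|_G)$, where $\CC[x_{G^c}]$ is the trivial $D_{G^c}$-module.

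Both $N = \widetilde{S}_G(b)$ for $b \in B_G^\beta$ and $N = P_G^\beta$ satisfy these hypotheses (the former because $b - \beta \in \CC G$, the latter because its $\ZZ^d$-graded support is the disjoint union of the translates $\cC_A(\beta) \cap (b + \ZZ G)$ over $b \in B_G^\beta$). Since $\widetilde{S}_G$ is Cohen--Macaulay over $S_G$ \cite{Hochster}, Theorem~\ref{thm:higherHHvanishing} yields $\HH^G_{>0}(\widetilde{S}_G(b), \beta|_G) = 0$, and this vanishing propagates along the natural filtration of $P_G^\beta$ by submodules isomorphic to $\widetilde{S}_G(b_i)$ (via the long exact sequence) to yield $\HH^G_{>0}(P_G^\beta, \beta|_G) = 0$. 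Hence the splitting collapses to
\[
\HH^A_q(N, \beta) \cong \HH^G_0(N, \beta|_G) \otimes_\CC \CC[x_{G^c}]^{\oplus \binom{\codim(G)}{q}}
\]
as $D$-modules for both choices of $N$, and additivity of characteristic cycles on short exact sequences, combined with the shift formula~\eqref{eqn:shiftHH} and the $\beta$-independence of the generic multiplicity from~\eqref{eqn:genericmuDef}, gives
\[
\charC^L(\HH^G_0(P_G^\beta, \beta|_G)) = |B_G^\beta| \cdot \charC^L(\HH^G_0(\widetilde{S}_G(b), \beta|_G)) = |B_G^\beta| \sum_{\tau \in \Phi_G^L} \mu_G^{L, \tau}\, \overline{C_G^\tau}.
\]

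Finally, inspection of the defining equations, using the vanishing $a_{kj} = 0$ for $k > \dim(\CC G)$ and $j \in G$, yields $\overline{C_A^\tau} = \overline{C_G^\tau} \times T^*_{\CC^{G^c}} \CC^{G^c}$ for each $\tau \subseteq G$, while components $\overline{C_A^\tau}$ with $\tau \not\subseteq G$ would require $\xi_j \neq 0$ for some $j \in G^c$, contradicting that the modules above are annihilated by $\del_{G^c}$. Tensoring the $D_G$-characteristic cycle by the trivial $D_{G^c}$-module $\CC[x_{G^c}]$ therefore gives the $D$-characteristic cycle with multiplicity $\mu_G^{L, \tau}$ on $\overline{C_A^\tau}$ for $\tau \subseteq G$ (and zero otherwise), and multiplying by the Koszul factor $\binom{\codim(G)}{q}$ yields both stated equalities. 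The main obstacle I anticipate is the careful verification of the Künneth-type splitting---in particular, matching the internal Koszul complex with the $G$-Euler--Koszul complex tensored with $\CC[x_{G^c}]$ while respecting both the $D$-action and the $\ZZ^d$-grading, and tracking the corresponding characteristic cycle identification across the product decomposition $T^*\CC^n = T^*\CC^G \times T^*\CC^{G^c}$.
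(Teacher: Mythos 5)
Your proposal is correct and follows essentially the same route as the paper: the paper disposes of the case $\tau\not\subseteq G$ by noting that $\xi_j=\ini_L(\del_j)$ annihilates $\gr^L(\HH_0)$ for $j\notin G$ while $\xi_j\notin P_\tau$ for $j\in\tau$, and for the rest it cites the proof of \cite[Theorem~6.1]{berkesch} with $\mu^{L,\overline{C_A^\tau}}$ in place of rank --- which is precisely the splitting of the Euler--Koszul complex along $G$ into a trivial Koszul factor of size $\codim(G)$ (yielding $\binom{\codim(G)}{q}$) and the $G$-Euler--Koszul complex, whose higher homology dies by Cohen--Macaulayness of $\widetilde S_G$; you have simply unpacked that citation. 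One imprecision worth correcting: the twisted operator $(E_k-\beta_k)\circ$ does not act on a homogeneous $y\in D\otimes_{\CC[\del]}N$ of degree $\gamma$ as the scalar $\gamma_k-\beta_k$ (indeed $\deg(y)-\beta$ need not lie in $\CC G$, since $y$ carries $x$- and $\del$-factors from $G^c$); the correct computation is that $E_k$ itself acts on such $y$ as the scalar $\beta_k-\gamma_k$, so that the twist by $\deg_k(y)$ cancels it and the operator acts as zero --- which is the conclusion you need and is the content of the lemma you are implicitly reproving from \cite{berkesch}.
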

\begin{proof}
For all $j\notin G$ we have that $\del_j\cdot \widetilde{S}_G=0$, hence that $\xi_j \cdot \gr^L(\HH_0 ( \widetilde{S}_G (b),\beta))=0$ where $\xi_j=\ini_L(\del_j)\in \CC [x,\xi]\cong \gr^L (D)$. 
On the other hand, by the definition of $P_{\tau}$, it is clear that $\xi_j \in P_{\tau}$ if and only if $j\notin \tau$.
Thus, we have that $(\gr^L(\HH_0 ( \widetilde{S}_G (b),\beta)))_{P_{\tau}}=0$ if $\tau\nsubseteq G$. Now, with $\mu^{L,\overline{C_A^\tau}}$ in place of rank, 
the arguments in the proof of \cite[Theorem~6.1]{berkesch} yield this result.
\end{proof}

\begin{proof}[Proof of Theorem~\ref{thm:computeMults}]
The argument proving \cite[Theorem~6.6]{berkesch} can be used to obtain this result, when $J$ is chosen to be the right hand side of~\eqref{eqn:J} and $\mu^{L,\overline{C_A^\tau}}$ 
in place of rank. 
We make note of the necessary modifications below. 

To begin, it follows from Theorem~\ref{thm:higherHHvanishing} and \eqref{eqn:genericmuDef} that
\begin{equation}
\label{mu-Q}
\mu_{A,i}^{L,\tau}(\beta) = 
\begin{cases}\mu_A^{L,\tau} + \mu_{A,1}^{L,\tau}(Q_A,\beta) - \mu_{A,0}^{L,\tau}(Q_A,\beta) 
&\text{if $i=0$,}\\
\mu_{A,i+1}^{L,\tau}(Q_A,\beta)
&\text{if $i>0$,}
\end{cases}
\end{equation}
where $Q_A$ sits in the short exact sequence $0\to S_A\to S_A[\del^{-1}]\to Q_A\to 0$. 
Then \cite[Proposition 5.10]{berkesch} implies that 
\begin{equation}
\label{mu-P-Q}
\mu_{A,i}^{L,\tau}(Q_A,\beta) = \mu_{A,i}^{L,\tau}(P_J^\beta,\beta),
\end{equation}
where $J$ is equal to the right hand side of~\eqref{eqn:J}. 
Now \cite[Lemmas 6.9, 6.10, 6.11, and 6.14]{berkesch} can be applied verbatim, 
while Lemma~\ref{lem:simpleRT} replaces the need for \cite[Lemma~6.13]{berkesch}. 
Finally, as \cite[Lemmas 6.12 and 6.15]{berkesch} hold when rank is replaced with $\mu_A^{L,\overline{C_A^\tau}}$, which is possible since localization at $P_\tau$ and $\gr^L(-)$ are exact functors and length is additive, the arguments of the proof of \cite[Theorem~6.6]{berkesch} yield the desired result. 
In particular, the spectral sequence involved begins with the cellular resolution of $P_J^\beta$ as constructed in \cite[(6.3)]{berkesch}:
\begin{equation}\label{eq:cellularComplex}
0 \rightarrow P_J \rightarrow I_J^0 \rightarrow I_J^1 \rightarrow \cdots \rightarrow I_J^{r}\rightarrow 0,
\end{equation}
where $I_J^\bullet$ is constructed as follows. 
Set 
\begin{eqnarray*}
\Delta^0_J &=& \{ F\preceq A \mid \exists (F,b)\in \max(J) \}, \\ 
\Delta^p_J &=& \{ s \subseteq \Delta^0_J \mid |s| = p+1\}, \text{ and} \\
F_s &=& \bigcap_{G\in s} G \ \ \text{for $s\in\Delta^p_J$}. 
\end{eqnarray*} 
With $r +1 = |\Delta^0_J|$, 
let $\Delta = \Delta_J^{\beta}$ be the standard $r$-simplex  
with vertices corresponding to the elements of $\Delta^0_J$. 
To the $p$-face of $\Delta$ spanned by the vertices 
corresponding to the elements in $s\in\Delta^p_J$, 
assign the ranking toric module $P_{F_s,J}^{\beta}$. 
Choosing the natural maps 
$P_{F_s,J}^{\beta} \rightarrow P_{F_t,J}^{\beta}$ 
for $s\subseteq t$ 
induces a cellular complex supported on $\Delta$, 
\begin{eqnarray}\label{9}
    I_J^\bullet\colon\quad 
    I_J^0 \rightarrow
    I_J^1 \rightarrow \cdots \rightarrow I_J^r \rightarrow 0
    \quad\text{with}\quad 
I_J^p = \bigoplus_{s\in\Delta^p_J} P_{F_s,J}^{\beta}.    
\end{eqnarray}
Applying Euler--Koszul homology to~\eqref{9} yields a double complex. The desired spectral sequence arises from this double complex after localizing at $P_\tau$ and applying $\gr^L(-)$. 
\end{proof}

\begin{remark}\label{ex:2cmpts}
If $\beta\in\CC^d$ is such that 
$\max(\mathcal{J}(\beta))$ involves two faces, $F_1, F_2$, 
then the proof of Theorem~\ref{thm:computeMults} 
shows that 
\begin{align}\label{eq:2cmpts}
\mu_{A,0}^{L,\tau}(\beta) - \mu_A^{L,\tau} = 
    \sum_{i=1}^2 \left(
    |B_{F_i}^{\beta}| \cdot [\codim(F_i)-1] \cdot \mu_{F_i}^{L,\tau} 
    \right) + |B_G^{\beta}| \cdot C^{\beta} \cdot\mu_{G}^{L,\tau},
\end{align}
where 
$G = F_1\cap F_2$ and 
the constant $C^{\beta}$ is given by
\begin{eqnarray*}
C^{\beta} = \binom{\codim(G)}{2} - \codim(G) + 1 
    - \binom{\codim(F_1)}{2} - \binom{\codim(F_2)}{2} +
    \binom{\codim(\CC F_1+\CC F_2)}{2}. 
\end{eqnarray*}
\end{remark}

\begin{example}\label{notsimplicial} 
The values of the $\mu_{A,0}^{L,\tau}(\beta)$ for a fixed $\beta$ are dependent upon the choice of face $\tau\in\Phi_A^L$. 
For example, consider the matrix 
\[
A = \begin{bmatrix}
     2 & 3 & 1 & 0 & 0 & 0 & 1\\
     0 & 0 & 0 & 2 & 3 & 1 & 1\\
     0 & 0 & 1 & 0 & 0 & 1 & 0\\
    \end{bmatrix},
    \]
    and the parameter $\beta=(0,0,-1)^t$, which lies outside the cone $\RR_{\geq 0} A$. 
    It turns out that 
\[
\widetilde{\NN A}\setminus\NN A=(\beta + \NN G_1)\cap \RR_{\geq 0} A=(\beta + \NN G_1)\setminus\{\beta\},
\] 
where $G_1=\{a_3,a_6\}$ and $G_2=\{a_1,a_2,a_5,a_7\}$ are facets of $A$. 
    In particular, $\cE_A=\{\beta\}$ and the ranking lattices at $\beta$ are  
\[
\EE^\beta =(\beta + \ZZ G_1)\cup (\beta + \ZZ G_2).
\] 
By Remark \ref{ex:2cmpts}, $\mu^{L,\varnothing}_{A,0}(\beta)-\mu^{L,\varnothing}_A  =1$ for any projective weight vector $L$. On the other hand, $\mu^{L,\tau}_{A,0}(\beta)=\mu^{L,\tau}_A$ if $\tau\neq\nothing$.
\end{example}

\begin{example}\label{ex:Lmatters-d=3}
The choice of projective weight vector impacts the resulting stratification via multiplicities of $\cE_A$. 
For example, consider the matrix 
\[
A = \begin{bmatrix}
2 & 3 & 0 & 0 & 0 & 0 & 1\\
0 & 0 & 1 & 3 & 0 & 0 & 1\\
0 & 0 & 0 & 0 & 1 & 2 & 1
\end{bmatrix},
\]
which has 
\[
\widetilde{\NN A}\setminus \NN A=(\beta+\NN G_1)\cup (\beta+\NN G_2),
\] 
where $G_1=\{a_3,a_4\}, G_2=\{a_5,a_6\}\preceq A$ and $\beta = (1,0,0)^t$. Moreover, we also have 
\[
\EE^\beta = (\beta+\ZZ G_1)\cup (\beta+\ZZ G_2)
\quad\text{and}\quad 
\cE_A=(\beta+\CC G_1)\cup(\beta+\CC G_2).
\]
If $L_\del = (1,4,1,4,1,3,1)$ and $L_x = 5\cdot\boldone_7-L_\del$, then $\mu^{L,\varnothing}_{A,0}(\beta')-\mu^{L,\varnothing}_A  =1$ for any $\beta' \in \cE_A$. On the other hand, the stratification of $\cE_A$ by the rank jump is different: 
\begin{align*}
\mu^{F,\varnothing}_{A,0}(\beta')-\mu^{F,\varnothing}_A= \begin{cases}
2 &\text{if $\beta' \in(\beta+\CC G_2)\setminus\{\beta\}$}\\
3 &\text{if $\beta' \in(\beta+\CC G_1)\setminus\{\beta\}$}\\
4 &\text{if $\beta' =\beta$}.
\end{cases}
\end{align*}
\end{example}

%%%%%%%%%%%%%%%%%%%%%%%%%%%%%%%%%%%%%%
\section{More consequences of the multiplicity computation}
%%%%%%%%%%%%%%%%%%%%%%%%%%%%%%%%%%%%%%
\label{sec:consequences}

For $\tau \in \Phi_A^L$, let 
\[
j_A^{L,\tau}(\beta) 
\defeq \mu^{L,\tau}_{A,0}(\beta)-\mu^{L,\tau}_A
\] 
be the \emph{$(L,\tau)$-multiplicity jump} at $\beta$, and let 
\[
\cE_A^{L,\tau}
  \defeq \{ \beta \in \CC^d \mid 
  j_A^{L,\tau}(\beta)>0\}
\] 
be the \emph{$(L,\tau)$-exceptional set} of $A$.
In this section, we record consequences of Theorem~\ref{thm:computeMults} and its implications for $\cE_A^{L,\tau}$. 
We also propose a description of $\cE_A^{L,\tau}$ and prove it holds in a special case. 

\begin{cor}\label{cor:not-in-any-codim2}
If $\tau\in \Phi_A^L$ is a face of the $(A,L)$-umbrella such that $\tau$ is not contained in any face of $A$ of codimension $2$, then $\cE_A^{L,\tau} = \nothing$. 
\end{cor}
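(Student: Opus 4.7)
The plan is to show that the $(L,\tau)$-multiplicity jump at any $\beta$ vanishes by tracking which faces of $A$ can contribute to $\mu^{L,\tau}_{A,i}(\beta)$ through the spectral sequence from the proof of Theorem~\ref{thm:computeMults}. First, using~\eqref{mu-Q} and~\eqref{mu-P-Q}, together with the nonnegativity statement in Theorem~\ref{thm:sw char}, I would reduce the claim to showing
\[
\mu^{L,\tau}_{A,0}(P_J^\beta,\beta) = \mu^{L,\tau}_{A,1}(P_J^\beta,\beta)
\]
for $J = \mathcal{J}(\beta)$.

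The key combinatorial input is that the face lattice of the pointed cone $\RR_{\geq 0}A$ is graded by dimension, so any face of codimension at least $2$ is contained in a face of codimension exactly $2$. Under the hypothesis, every face $G\preceq A$ with $\tau\subseteq G$ therefore has codimension $0$ or $1$. Moreover, two distinct facets of $A$ intersect in a face of codimension at least $2$, so at most one facet $F_0$ of $A$ can contain $\tau$.

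I would then examine the cellular resolution~\eqref{eq:cellularComplex} of $P_J^\beta$. Since $\ZZ A=\ZZ^d$, we have $\EE_A^\beta=\nothing$, so $A\notin \Delta_J^0$ and every element of $\Delta_J^0$ has codimension at least $1$. For $s\in\Delta_J^p$ with $p\geq 1$, the face $F_s$ is an intersection of at least two distinct faces from $\Delta_J^0$; if $\tau\subseteq F_s$, then each of those distinct faces would contain $\tau$, contradicting the previous paragraph. Lemma~\ref{lem:simpleRT} applied to each summand $P_{F_s,J}^\beta$ therefore forces the $\mu^{L,\tau}$-contributions from the columns $p\geq 1$ to vanish, and the spectral sequence degenerates onto the $p=0$ column. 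The only surviving contribution comes from $F_0$ when it exists, for which Lemma~\ref{lem:simpleRT} gives $\mu^{L,\tau}_{A,q}(P_{F_0,J}^\beta,\beta) = |B_{F_0}^\beta|\binom{1}{q}\mu^{L,\tau}_{F_0}$, which agrees for $q=0$ and $q=1$. This yields the required equality and hence $\cE_A^{L,\tau}=\nothing$.

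The step I expect to be the main subtlety is confirming that the vanishing part of Lemma~\ref{lem:simpleRT} applies to the modules $P_{F_s,J}^\beta$ in the cellular complex for the specific $J$ used in the proof of Theorem~\ref{thm:computeMults}, rather than only to the $P_G^\beta$ that appear in the lemma's statement. The vanishing argument there, however, uses only that $\del_j\cdot\widetilde{S}_G=0$ for $j\notin G$, which extends to $P_{F_s,J}^\beta$ without modification.
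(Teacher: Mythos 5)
Your proposal is correct and follows essentially the same route as the paper's proof: reduce via \eqref{mu-Q} and \eqref{mu-P-Q} to computing the jump from $P_J^\beta$, note that the hypothesis forces $\tau$ to lie in at most one facet (and in no deeper face), and then use Lemma~\ref{lem:simpleRT} to kill every term of the cellular resolution except possibly the $p=0$ summand indexed by that facet $F_0$, for which $\binom{1}{0}=\binom{1}{1}$. One small remark: the appeal to the nonnegativity $\mu_{A,0}^{L,\tau}(\beta)\geq\mu_A^{L,\tau}$ from Theorem~\ref{thm:sw char} is superfluous, since \eqref{mu-Q} and \eqref{mu-P-Q} already express the jump $j_A^{L,\tau}(\beta)$ as $\mu_{A,1}^{L,\tau}(P_J^\beta,\beta)-\mu_{A,0}^{L,\tau}(P_J^\beta,\beta)$, so one only needs that difference to vanish, not any a priori bound.
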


\begin{proof}
Fix $\beta\in\CC^d$. 
By hypothesis, $\tau$ is contained in at most one facet of $A$. 
Recall that the cellular resolution of $P_J^{\beta}$ is made of ranking toric modules $P_G^{\beta}$ for faces $G\preceq A$ such that $\bbE_G^{\beta}\neq \nothing$.
 
If $\tau$ is not contained in any proper face of $A$
or it is contained in a unique facet $F\preceq A$ with $\bbE_F^{\beta}=\nothing$, then Lemma~\ref{lem:simpleRT} guarantees that $\mu^{L,\tau}_{A,q}(P^\beta_G,\beta)=0$ for all $q\geq 0$ for any proper face $G\preceq A$ with $\EE_G^{\beta}\neq\nothing$. 
Thus, the formula from Theorem~\ref{thm:computeMults} computes that 
$\mu_{A,i}^{L,\tau}(P^{\beta},\beta)=0$ for all $i\geq 0$. 

For the remaining case when $\tau$ is contained in a unique facet $F\preceq A$ and $\bbE_F^{\beta} \neq \nothing$,  
\[
\mu_{A,i}^{L,\tau}(P^{\beta},\beta)
= \mu_{A,i}^{L,\tau}(P_F^{\beta} , \beta ) 
= |B^\beta_{F}|\cdot
  \mu^{L,\tau}_{A,i}(\widetilde{S}_F,\beta) 
=|B^\beta_{F}|\cdot \binom{1}{i} \cdot 
  \mu^{L,\tau}_F
\] 
for all $i\geq 0$. 
Therefore, as in the proof of Theorem~\ref{thm:computeMults},
\[
j_A^{L,\tau}(\beta) 
= \mu_{A,1}^{L,\tau}(P^\beta ,\beta ) - 
  \mu_{A,0}^{L,\tau}(P^\beta ,\beta )
= 0.
\qedhere
\] 
\end{proof}

\begin{remark}
\label{cor:codim1mu}
As an immediate consequence of Corollary \ref{cor:not-in-any-codim2}, if $\dim(\CC\tau)\geq d-1$, then $\mu^{L,\tau}_{A,i}(\beta)$ is independent of $\beta$. 
Notice that this fact was known when $\dim(\CC\tau)=d$ (see \cite[Theorem 3.10]{slopes}).
\qed
\end{remark}

\begin{cor}
\label{cor:uniqueCodim2}
If $\tau\in \Phi_A^L$ is a face of the $(A,L)$-umbrella such that $\tau$ is contained in a unique face $G\preceq A$ of codimension $2$, then 
\[
j_A^{L,\tau}(\beta)=
\begin{cases}
|B_G^{\beta}|\cdot \mu_G^{L,\tau} & \text{ if $(G,b) \in \max(\mathcal{J}(\beta))$ for $b\in B_G^{\beta}$, (see \eqref{eqn:J}),} \\
0 & \text{ otherwise.}
\end{cases}
\]
\end{cor}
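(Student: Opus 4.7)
The plan is to apply the formula $j_A^{L,\tau}(\beta) = \mu_{A,1}^{L,\tau}(P^\beta,\beta) - \mu_{A,0}^{L,\tau}(P^\beta,\beta)$ extracted inside the proof of Theorem~\ref{thm:computeMults} (via equations~\eqref{mu-Q} and~\eqref{mu-P-Q}), where $P^\beta = P_{\mathcal{J}(\beta)}^\beta$. Combining this with the cellular resolution~\eqref{eq:cellularComplex} and Lemma~\ref{lem:simpleRT}, only faces $F \preceq A$ containing $\tau$ contribute, and for each such face the contribution to $\mu_{A,i}^{L,\tau}$ equals $|B_F^\beta|\binom{\codim F}{i}\mu_F^{L,\tau}$. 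Under the hypothesis, the faces containing $\tau$ are $A$ (not in $\mathcal{J}(\beta)$), facets of $A$, $G$ itself, and subfaces of $G$.

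A single facet vertex $F \supseteq \tau$ contributes identically to $\mu_0$ and $\mu_1$ (both equal $|B_F^\beta|\mu_F^{L,\tau}$) since $\binom{1}{0}=\binom{1}{1}$, so every facet vertex contributes zero to the jump. By contrast, $G$ contributes $|B_G^\beta|\bigl(\binom{2}{1}-\binom{2}{0}\bigr)\mu_G^{L,\tau} = |B_G^\beta|\mu_G^{L,\tau}$ since $\codim G = 2$. Hence in the ``if'' case, where $G$ is a vertex of $\Delta^0_{\mathcal{J}(\beta)}$, the vertex contribution of $G$ in the cellular resolution produces precisely $|B_G^\beta|\mu_G^{L,\tau}$ while every other vertex contribution cancels. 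In the ``otherwise'' case, $G$ is absent from $\Delta^0_{\mathcal{J}(\beta)}$ and can reappear in the cellular resolution only as an intersection $F_i \cap F_j$ with $F_i,F_j \supseteq G$; a long-exact-sequence computation in Euler--Koszul homology using $0 \to P^\beta \to I^0 \to I^1 \to \cdots \to 0$, together with the vanishing of $\mu_{A,i}^{L,\tau}(I^0,\beta)$ for $i \geq 2$ (all vertex summands being codim~$1$ facet modules), forces $\mu_{A,i}^{L,\tau}(P^\beta,\beta) = 0$ for $i \geq 2$, and combined with the Euler characteristic identity $\sum_i (-1)^i \mu_{A,i}^{L,\tau}(P^\beta,\beta) = 0$ this yields $\mu_1 - \mu_0 = 0$.

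The main technical obstacle will be the bookkeeping of higher cellular cells in the spectral sequence: verifying that the contributions of subfaces of $G$ to $I^p$ for $p \geq 1$, and the contributions of facets $F \supseteq \tau$ with $G \not\subseteq F$ (possible when $\tau \subsetneq G$), together with their cellular differentials, combine to leave only the clean jump term in the ``if'' case and to vanish in the ``otherwise'' case. The uniqueness of $G$ as the codim~$2$ face containing $\tau$ is used crucially to collapse every higher intersection of facets $\supseteq G$ back to $G$, which is what enables these cancellations to go through.
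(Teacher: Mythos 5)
Your strategy matches the paper's at the level of the key formulas: both start from $j_A^{L,\tau}(\beta) = \mu_{A,1}^{L,\tau}(P^\beta,\beta) - \mu_{A,0}^{L,\tau}(P^\beta,\beta)$, observe via Lemma~\ref{lem:simpleRT} that only faces containing $\tau$ contribute, and note that a codimension-$1$ face contributes nothing to the difference $\mu_1 - \mu_0$ while $G$ contributes $|B_G^\beta|\mu_G^{L,\tau}$. But you explicitly flag the bookkeeping of higher cells as an unresolved ``technical obstacle,'' and this is where the gap lies: your proposal never closes the argument, because you haven't pinned down which faces can appear.

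The missing ingredient is a face-lattice observation that drastically constrains the combinatorics. The hypothesis that $G$ is the \emph{unique} codimension-$2$ face of $A$ containing $\tau$ is much stronger than you use. In the face lattice of a pointed polyhedral cone, every codimension-$2$ face lies in exactly two facets (the diamond property), and every face of codimension $\geq 3$ lies in at least three codimension-$2$ faces. Consequently, under the hypothesis: (i) no proper face of codimension $\geq 3$ can contain $\tau$ (it would lie in several codimension-$2$ faces, each containing $\tau$); and (ii) any facet $F$ containing $\tau$ must contain $G$ (otherwise $F \cap G$ would be a codimension-$\geq 3$ face containing $\tau$, impossible by (i)). Thus the full list of proper faces of $A$ containing $\tau$ is exactly $\{G, F_1, F_2\}$, where $F_1, F_2$ are the two facets containing $G$. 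This is what makes the paper's terse case analysis close: the complex $J' = \{(F,b) \in \mathcal{J}(\beta) \mid \tau \subseteq F\}$ can involve only these three faces, so $P_{J'}^\beta$ is either simple or has a length-one cellular resolution with vertices among $\{F_1, F_2\}$ and unique intersection cell $G$, and the Euler-characteristic calculation you sketch then genuinely terminates. In particular, your worry about ``facets $F \supseteq \tau$ with $G \not\subseteq F$'' and ``subfaces of $G$'' as vertices is moot --- neither can occur. Without deriving this constraint from the uniqueness hypothesis, the proposal does not actually establish the vanishing $\mu_{A,i}^{L,\tau}(P^\beta,\beta) = 0$ for $i \geq 2$ that the Euler-characteristic step relies on, nor does it rule out spurious contributions from unanticipated cells; the claim ``vanishing of $\mu_{A,i}^{L,\tau}(I^0,\beta)$ for $i\geq 2$ forces $\mu_{A,i}^{L,\tau}(P^\beta,\beta) = 0$ for $i\geq 2$'' is not valid for an arbitrary-length resolution and only becomes so once the above structure is in place.
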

\begin{proof}
By the proof of Theorem~\ref{thm:computeMults} and Lemma~\ref{lem:simpleRT}, 
\[
j_A^{L,\tau}(\beta) 
= \mu_{A,1}^{L,\tau}(P^\beta ,\beta ) - 
  \mu_{A,0}^{L,\tau}(P^\beta ,\beta )
=  \mu_{A,1}^{L,\tau}(P_{J'}^\beta ,\beta ) - 
  \mu_{A,0}^{L,\tau}(P_{J'}^\beta ,\beta ),
\]
where $J'=\{(F,b)\in J \mid \tau \subseteq F\}$ for $J=\mathcal{J}(\beta)$. 
If $(G,b)\in \max(J)$ and $b\in B_G^{\beta}$, then $J' =J(G)$ and $P_{J'}^{\beta}=P_{G}^{\beta}$.
Thus, it is enough to consider the case when $(G,b)\notin \max(J)$ for any $b$ but there exists at least one facet $F$ such that $\tau \subseteq G\preceq F$ and $(F,b)\in \max(J)$. 
In this case, either $\max(J')=J(F)$ or $J'= J(F)\cup J(F')$ for some other facet $F'$ such that $F\cap F' = G$. 
Either way, it follows that  
$\mu_{A,1}^{L,\tau}(P_{J'}^\beta ,\beta ) - 
  \mu_{A,0}^{L,\tau}(P_{J'}^\beta ,\beta ) = 0$.
\end{proof}

By Corollaries \ref{cor:not-in-any-codim2} and \ref{cor:uniqueCodim2}, if $\dim(\CC\tau) = d-2$, then $j_A^{L,\tau}(\beta)>0$ only when there is a (unique) codimension $2$ face $G$ of $A$ containing $\tau$ and 
$(G,b)\in \max(\mathcal{J}(\beta))$ for some $b\in B_G^{\beta}$. 

\begin{notation}
For any $\tau \in \Phi_A^L$, let us denote $S_A^{\tau}\defeq \CC [(\NN A + \ZZ \tau )\cap \RR_{\geq 0} A]$.
\end{notation}

\begin{conjecture}
\label{conj:exceptional-Ltau}
There is an equality 
\[
\cE_A^{L,\tau}=-\,  \operatorname{qdeg}
  \left(
  \bigoplus_{q=0}^{d-1} \Ext^{n-q}_{\CC[\del]} (
  S_A^{\tau},\CC[\del])
  (-\varepsilon_A)\right),
\] 
where $\varepsilon_A \defeq \sum_{i=1}^n a_i$. In particular, $\cE_A^{L,\tau} =\nothing$ if and only if $S_A^{\tau}$ is Cohen--Macaulay.
\end{conjecture}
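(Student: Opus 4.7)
My plan is to adapt the approach of \cite{MMW} that establishes the analogous description of the rank-jumping set $\cE_A$, now replacing $S_A$ by $S_A^{\tau}$ throughout. Note that $S_A \subseteq S_A^{\tau} \subseteq \widetilde{S}_A$ is a chain of $\ZZ^d$-graded subrings of $\widetilde{S}_A$ (the sum/intersection property guarantees that $S_A^{\tau}$ is actually a ring, and it is finitely generated as a $\CC[\del]$-module because it is contained in the finitely generated semigroup ring $\widetilde{S}_A$). In particular, $\widetilde{S}_A$ is a maximal Cohen--Macaulay $S_A^{\tau}$-module by \cite{Hochster}, so $\HH_i(\widetilde{S}_A,\beta)=0$ for all $i>0$ and all $\beta$ by Theorem~\ref{thm:higherHHvanishing}.

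The first step is to translate the multiplicity jump $j_A^{L,\tau}(\beta)$ into a statement about Euler--Koszul homology of $S_A^{\tau}$. Applying the long exact sequence of Euler--Koszul homology to
\[
0 \to S_A^{\tau} \to \widetilde{S}_A \to \widetilde{S}_A/S_A^{\tau} \to 0
\]
and using the vanishing above gives $\HH_{i+1}(\widetilde{S}_A/S_A^{\tau},\beta) \cong \HH_i(S_A^{\tau},\beta)$ for $i\geq 1$. Using the cellular complex~\eqref{eq:cellularComplex} adapted to resolve the quotient $\widetilde{S}_A/S_A^{\tau}$, together with Lemma~\ref{lem:simpleRT}, I would show that $\mu^{L,\tau}_{A,i}$ of these modules only receives contributions from simple ranking toric summands $P_G^{\beta}$ with $\tau \subseteq G$. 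The identity $j_A^{L,\tau}(\beta) = \mu^{L,\tau}_{A,1}(Q_A,\beta) - \mu^{L,\tau}_{A,0}(Q_A,\beta)$ from equation~\eqref{mu-Q}, combined with an analogous reduction for $Q_A = \widetilde{S}_A/S_A$, should identify the $\tau$-relevant part of $Q_A$ with $\widetilde{S}_A/S_A^{\tau}$ up to homological pieces that do not affect $\mu^{L,\tau}$. The conclusion of this step is that $\beta \in \cE_A^{L,\tau}$ if and only if $\HH_i(S_A^{\tau},\beta) \neq 0$ for some $i \geq 1$.

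The second step is the local duality argument of \cite{MMW} applied to $S_A^{\tau}$. By graded local duality over $\CC[\del]$, one has
\[
\Ext^{n-q}_{\CC[\del]}(S_A^{\tau}, \CC[\del])(-\varepsilon_A) \cong H^q_{\mm}(S_A^{\tau})^{\vee}
\]
as $\ZZ^d$-graded modules (graded Matlis dual). Thus $-\qdeg$ of the direct sum of Ext modules equals $\qdeg$ of $\bigoplus_{q<d} H^q_{\mm}(S_A^{\tau})$, which by the toric argument of \cite{MMW} coincides with the locus where higher Euler--Koszul homology of $S_A^{\tau}$ is supported. Combined with step one this gives the desired equality, and the final ``in particular'' is immediate since $S_A^{\tau}$ is Cohen--Macaulay if and only if all these $\Ext$ modules vanish.

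The main obstacle is step one: matching the alternating sum computing $j_A^{L,\tau}(\beta)$ with the nonvanishing of $\HH_i(S_A^{\tau},\beta)$ for $i\geq 1$, ruling out pathological cancellations in the cellular spectral sequence from the proof of Theorem~\ref{thm:computeMults}. Corollaries~\ref{cor:not-in-any-codim2} and~\ref{cor:uniqueCodim2} verify the expected behavior when the minimal face of $A$ containing $\tau$ has codimension at most two. Extending to all codimensions, and keeping careful track of which faces are both maximal in $\mathcal{J}(\beta)$ and contain $\tau$, is the principal technical burden; this is exactly the analogue of the non-cancellation arguments that occupied the bulk of \cite{MMW} in the case $\tau=\nothing$.
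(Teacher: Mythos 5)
You should note first that the statement you are trying to prove is stated in the paper as a \emph{conjecture}, not a theorem, and the paper does not prove it in full generality. The paper establishes only: the containment $\cE_A^{L,\tau}\subseteq -\,\qdeg(\cdots)$ (Proposition~\ref{prop:inclusion}); the final ``in particular'' equivalence with Cohen--Macaulayness of $S_A^\tau$ when $\RR_{\geq 0}A$ is a simplicial cone (Theorem~\ref{thm:invertCM}); and the full equality when $L$ is convex with respect to $A$ and $\tau=\nothing$ (Corollary~\ref{cor:EAconvex}). Your proposal is a plan for the full statement and should therefore be read as an attempt to settle the open conjecture.

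Your step one is where the real difficulty lies, and you correctly identify it, but the issue is more serious than ``keeping careful track of faces.'' The clean part of your step one is already in the paper's Proposition~\ref{prop:inclusion}: since $S_A[\del_\tau^{-1}]=S_A^\tau[\del_\tau^{-1}]$ and $\xi_j\notin P_\tau$ exactly when $j\in\tau$, one has $\mu_{A,0}^{L,\tau}(S_A,\beta)=\mu_{A,0}^{L,\tau}(S_A^\tau,\beta)$, and then $j_A^{L,\tau}(\beta)=\sum_{j\geq 1}(-1)^{j+1}\mu_{A,j}^{L,\tau}(S_A^\tau,\beta)$. If $\beta\notin-\qdeg(\cdots)$, all higher $\HH_j(S_A^\tau,\beta)$ vanish and the jump is $0$; this gives the easy containment. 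The converse --- that nonvanishing of some $\HH_j(S_A^\tau,\beta)$ with $j\geq 1$ forces the alternating sum to be strictly positive --- is \emph{not} a bookkeeping lemma about the cellular resolution. The multiplicities $\mu_{A,j}^{L,\tau}(S_A^\tau,\beta)$ are nonnegative integers entering with alternating signs, and your spectral sequence framework by itself does not preclude exact cancellation with all individual terms positive. In \cite{MMW} (and in the paper's Corollary~\ref{cor:EAconvex} and Corollary~\ref{cor: upper-semicont-tau-convex-case}) this cancellation is ruled out not by combinatorics of the complex but by a holonomic-family / upper-semicontinuity argument for the relevant $\HH_0$, and that argument is available only under the additional hypotheses (convexity, or $\tau$-convexity plus a pyramid condition) that the paper imposes. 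Without such a semicontinuity statement in general, your step one remains unproved, and with it the whole proposal.

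Your step two is fine modulo step one: graded local duality over $\CC[\del]$ identifies $-\qdeg$ of the $\Ext$ modules with $\qdeg$ of the lower local cohomology of $S_A^\tau$, and by Theorem~\ref{thm:HHvanishing}/Theorem~\ref{thm:higherHHvanishing} this is exactly the locus of $\beta$ for which some higher Euler--Koszul homology of $S_A^\tau$ is nonzero. So the proposal correctly reduces the conjecture to the equivalence in step one, which is the same reduction the authors use, and which remains open in general.
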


As evidence of the truth of Conjecture~\ref{conj:exceptional-Ltau}, we exhibit a containment between the two sets involved. We then prove the second part of conjecture in the case that $\RR_{\geq 0}A$ is a simplicial cone.  

\begin{prop}\label{prop:inclusion}
There is a containment 
\[
\cE_A^{L,\tau}
\, \subseteq \ \, 
-\,  \operatorname{qdeg}
  \left(
  \bigoplus_{q=0}^{d-1} \Ext^{n-q}_{\CC[\del]} (
  S_A^{\tau},\CC[\del])
  (-\varepsilon_A)\right).
\]
\end{prop}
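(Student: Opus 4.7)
The plan is to proceed in three steps: first, rephrase the right-hand side via graded local duality as quasidegrees of the local cohomology of $S_A^\tau$; second, use the Cohen--Macaulayness of the saturation $\widetilde{S}_A$ to reduce to local cohomology of $C^\tau \defeq \widetilde{S}_A / S_A^\tau$; and third, use Theorem~\ref{thm:computeMults} to show that any multiplicity jump at $\tau$ forces a nonzero local cohomology class of $C^\tau$ at a degree in $\beta + \CC G$ for a suitable face $G \supseteq \tau$.

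For step one, with $R = \CC[\del]$ carrying the $\ZZ^d$-grading $\deg(\del_i) = a_i$, the graded canonical module is $\omega_R \cong R(-\varepsilon_A)$. Graded local duality yields $\Ext^{n-q}_R(N, R)(-\varepsilon_A) \cong {}^{\vee}\!H^q_\mm(N)$, and Matlis duality negates the $\ZZ^d$-grading. Passing to the weakly toric setting by filtering $S_A^\tau$ by its toric submodules, we will obtain
\[
-\qdeg\!\left(\bigoplus_{q=0}^{d-1} \Ext^{n-q}_R(S_A^\tau, R)(-\varepsilon_A)\right) = \bigcup_{q=0}^{d-1} \qdeg\!\bigl(H^q_\mm(S_A^\tau)\bigr).
\]

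For step two, since $\widetilde{S}_A$ is Cohen--Macaulay (Hochster) of dimension $d$, we have $H^q_\mm(\widetilde{S}_A) = 0$ for $q < d$; also $H^0_\mm(S_A^\tau) = 0$, because $\deg(S_A^\tau)$ is closed under addition by $\NN A$, so no homogeneous element of $S_A^\tau$ is $\mm$-torsion. Applying $H^\bullet_\mm(-)$ to the short exact sequence $0 \to S_A^\tau \to \widetilde{S}_A \to C^\tau \to 0$ will then produce isomorphisms $H^q_\mm(S_A^\tau) \cong H^{q-1}_\mm(C^\tau)$ for $1 \leq q \leq d-1$. It therefore suffices to show that $\beta \in \qdeg\bigl(H^p_\mm(C^\tau)\bigr)$ for some $0 \leq p \leq d-2$.

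For step three, assume $j_A^{L,\tau}(\beta) > 0$. Theorem~\ref{thm:computeMults} (specifically the surviving contribution in the cellular spectral sequence of~\eqref{eq:cellularComplex}), combined with Lemma~\ref{lem:simpleRT}, forces the existence of a face $G \preceq A$ with $\tau \subseteq G$ and $\EE_G^\beta \neq \varnothing$; moreover, $G \neq A$, since $\ZZ A = \ZZ^d$ implies $\EE_A^\beta = \varnothing$. I will pick $b \in B_G^\beta$; translating by $\ZZ G$, I may assume $b \in \widetilde{\NN A}$, and since $\ZZ\tau \subseteq \ZZ G$, the condition $b \notin \NN A + \ZZ G$ yields $b \notin \NN A + \ZZ\tau$, so $b \in \deg(C^\tau) \cap (\beta + \CC G)$. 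An Ishida-type resolution of $C^\tau$ built from $\ZZ G$-localizations of saturated semigroup rings $\widetilde{S}_F$ for $F \preceq A$ will then witness $b + \ZZ G$ as a nonzero class in $H^p_\mm(C^\tau)$ at some degree in $b + \CC G \subseteq \beta + \CC G$, for some $0 \leq p \leq d-2$. The main obstacle will be this last assertion: identifying the correct cohomological degree $p$ and confirming that the class $b + \ZZ G$ survives the boundary maps of the Ishida complex without being cancelled by contributions from deeper faces. This will be the $\tau$-twisted analog of the noncancellation argument underlying the Matusevich--Miller--Walther description of $\cE_A$, and the stratification of the lattice translates $\{b + \ZZ G : G \supseteq \tau,\ \EE_G^\beta \neq \varnothing\}$ by the poset of faces of $\Phi_A^L$ should supply the required bookkeeping.
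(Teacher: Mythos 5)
Your proposal takes a genuinely different and substantially harder route than the paper, and it contains a gap that you yourself flag at the end of step three. The paper's argument is the contrapositive and hinges on one observation you never make: since $S_A^\tau = \CC[(\NN A + \ZZ\tau)\cap\RR_{\geq 0}A]$, one has $S_A^\tau[\del_\tau^{-1}] = S_A[\del_\tau^{-1}]$, and because localizing $\gr^L(D)$ at $P_\tau$ inverts exactly the $\xi_j$ with $j\in\tau$, this gives
\[
\mu^{L,\tau}_{A,j}(S_A,\beta) = \mu^{L,\tau}_{A,j}(S_A[\del_\tau^{-1}],\beta) = \mu^{L,\tau}_{A,j}(S_A^\tau,\beta)
\]
for all $j$. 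With that identity in hand, the proof is immediate: if $\beta$ lies outside $-\qdeg(\bigoplus_q \Ext^{n-q}_{\CC[\del]}(S_A^\tau,\CC[\del])(-\varepsilon_A))$, then $\HH_i(S_A^\tau,\beta)=0$ for all $i>0$ by \cite[Theorem~6.6]{MMW}, so $\mu^{L,\tau}_{A,0}(S_A^\tau,\beta)$ equals the alternating sum $\sum_j(-1)^j\mu^{L,\tau}_{A,j}(S_A^\tau,\beta)$, which is constant in $\beta$ (by \cite[Theorem~4.11]{slopes} / \eqref{eqn:genericmuDef}) and hence equals $\mu_A^{L,\tau}$. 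Thus $\beta\notin\cE_A^{L,\tau}$. No local duality, no Ishida complex, no survival argument is needed.

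Your steps one and two are fine as far as they go: the local-duality reformulation is standard, and the reduction to $C^\tau = \widetilde{S}_A/S_A^\tau$ via the short exact sequence is correct (though you should verify that $S_A^\tau$ is indeed a toric $S_A$-module before invoking these dualities). The actual obstruction is precisely what you name in step three: it is not enough to produce a face $G\supseteq\tau$ with $\EE_G^\beta\neq\varnothing$; a jump $j_A^{L,\tau}(\beta)>0$ is a more delicate condition (Corollaries~\ref{cor:not-in-any-codim2} and~\ref{cor:uniqueCodim2} show $\EE_G^\beta\neq\varnothing$ alone does not suffice), and you must convert it into a class that survives to $H^p_\mm(C^\tau)$ in a degree lying on $\beta+\CC G$. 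That noncancellation is the hard content and you have not supplied it. Moreover, proving the direct implication this way would essentially re-derive the MMW identification of exceptional quasidegrees in a $\tau$-twisted setting, which is precisely what the paper avoids by quoting \cite[Theorem~6.6]{MMW} after reducing to $S_A^\tau$ via the localization identity. In short: the plan is not wrong in spirit, but without the $S_A^\tau[\del_\tau^{-1}]=S_A[\del_\tau^{-1}]$ reduction you are attacking the problem from a much more exposed position, and the step you mark as ``the main obstacle'' really is a genuine gap, not mere bookkeeping.
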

\begin{proof}

By the definition of $S_A^{\tau}$, it is clear that $S_A^{\tau} [\del_\tau^{-1}]= S_A [\del_\tau^{-1}]$ and thus,
\[
\mu^{L,\tau}_{A,0}(S_A,\beta) 
= \mu^{L,\tau}_{A,0}(S_A [\del_\tau^{-1}],\beta)
= \mu^{L,\tau}_{A,0}(S_A^{\tau}[\del_\tau^{-1}], \beta) 
= \mu^{L,\tau}_{A,0}(S_A^{\tau}, \beta),
\] 
where the first and third equalities follows from the definition of $\mu^{L,\tau}_{A,0}$ (see \eqref{eqn:muDef}) and the fact that $\xi_j=\operatorname{in}_L(\del_j) \notin P_{\tau}$ if and only if $j\in\tau$.

If $\beta \notin -\operatorname{qdeg}
  (\Ext^{n-q}_{\CC[\del]} (S_A^{\tau},\CC[\del])(-\varepsilon_A)) $
for any $q=0, \ldots , d-1$, then 
$\HH_i(S_A^{\tau},\beta)=0$ for all $i>0$ by \cite[Theorem~6.6]{MMW}. 
Thus, $\mu_{A,0}^{L,\tau}(S_A^{\tau},\beta)=\sum_{j=0}^d (-1)^j \mu_{A,j}^{L,\tau}(S_A^{\tau},\beta)$, which is independent of $\beta$ by \cite[Theorem~4.11]{slopes} and hence equal to the generic value $\mu_A^{L,\tau}$.  
In particular, $\beta \notin \cE_A^{L,\tau}$.
\end{proof}

\begin{prop}
\label{prop:PJhigherVanishing-specialCase}
Fix $\beta\in\CC^d$ and let $J$ be as in~\eqref{eqn:J}.
If $J$ involves only facets of $A$ satisfying that the intersection of $r$ of them is a face of codimension at most $r$, then $\HH_q (P_J^\beta,\beta) = 0$ for all $q\geq 2$. 
\end{prop}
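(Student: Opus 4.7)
The plan is to feed the cellular resolution \eqref{eq:cellularComplex} of $P_J^\beta$ into the Euler-Koszul functor and read the desired vanishing off the hyperhomology spectral sequence of the resulting double complex. Because $D$ is flat over $\CC[\del]$, tensoring the exact sequence $0\to P_J^\beta\to I_J^0\to\cdots\to I_J^r\to 0$ with $D$ preserves exactness, so one of the two spectral sequences of $\KK^A_\bullet(I_J^\bullet,\beta)$ collapses onto $\HH_\bullet(P_J^\beta,\beta)$, while the other takes the form
\[
E_1^{p,q}\;=\;\HH_q(I_J^p,\beta)\ \Longrightarrow \ \HH_{q-p}(P_J^\beta,\beta).
\]
Since $I_J^p=\bigoplus_{s\in\Delta^p_J}P_{F_s,J}^\beta$ with $|s|=p+1$, the codimension hypothesis forces $\codim(F_s)\leq p+1$, and it will suffice to establish the uniform vanishing
\[
\HH_q^A(P_{G,J}^\beta,\beta)\;=\;0\quad\text{for every face }G\preceq A\text{ and every }q>\codim(G);
\]
granted this, $E_1^{p,q}=0$ whenever $q-p\geq 2$, hence $\HH_n(P_J^\beta,\beta)=0$ for all $n\geq 2$.

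For the uniform claim I would use a Künneth factorization of the Euler-Koszul complex in the directions normal to $G$. Setting $c=\codim(G)$, choose a $\ZZ$-basis of $\ZZ A=\ZZ^d$ whose first $d-c$ members span $\ZZ G$. Because $\del_i$ annihilates $P_{G,J}^\beta$ for $i\notin G$, because the new $i$th coordinate of $a_j$ vanishes whenever $j\in G$ and $i>d-c$, and because every $\ZZ^d$-degree of $P_{G,J}^\beta$ lies in $\beta+\CC G$, a direct computation on monomial generators of $D\otimes_{\CC[\del]}P_{G,J}^\beta$ shows that the $\circ$-action of $E_i-\beta_i$ is identically zero for each $i>d-c$. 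The Euler-Koszul complex therefore decomposes as a length-$c$ Koszul complex on the zero sequence tensored with the partial Koszul complex built from $E_1-\beta_1,\ldots,E_{d-c}-\beta_{d-c}$; by the Koszul Künneth formula, the first factor contributes only $\binom{c}{q}$ copies in degrees $0\leq q\leq c$, so the problem reduces to showing that the partial Koszul complex has vanishing homology in positive degree.

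For the latter, a further direct check identifies the partial Koszul complex, under the $\CC$-vector space isomorphism $D\otimes_{\CC[\del]}P_{G,J}^\beta\cong \CC[x_{G^c}]\otimes(D_G\otimes_{\CC[\del_G]}P_{G,J}^\beta)$, with $\CC[x_{G^c}]$ tensored with the $A_G$-Euler-Koszul complex of $P_{G,J}^\beta$ at the parameter $\beta|_{\CC G}$. Its higher homology therefore vanishes provided $P_{G,J}^\beta$ is maximal Cohen-Macaulay over $S_G$, and I would check this by decomposing $P_{G,J}^\beta$ as a $\ZZ^d$-graded $S_G$-module into the direct sum $\bigoplus_{b\in B_G^\beta}\CC\{(b+\ZZ G)\cap\cC_A(\beta)\}$ and using the face identity $\RR_{\geq 0}A\cap\CC G=\RR_{\geq 0}G$ to identify each summand with a $\CC G$-graded shift of $\widetilde{S}_G$, whose maximal Cohen-Macaulay property over $S_G$ is Hochster's theorem; Theorem~\ref{thm:higherHHvanishing} applied to the face $G$ then kills the $A_G$-Euler-Koszul homology in positive degree. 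The main technical point I anticipate is precisely this maximal Cohen-Macaulay identification of the summands of $P_{G,J}^\beta$, which uses both the face property and the constraint $\Im\beta\in\RR G$ automatic from $b\in\ZZ^d\cap(\beta+\CC G)$; the rest of the argument is a Künneth computation plus the spectral sequence collapse.
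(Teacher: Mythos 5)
Your overall architecture coincides with the paper's: both arguments feed the cellular resolution \eqref{eq:cellularComplex} into Euler--Koszul homology and conclude from the vanishing $\HH_q(P^\beta_{F_s,J},\beta)=0$ for $q>\codim(F_s)$ together with the hypothesis $\codim(F_s)\le p+1$ for $s\in\Delta^p_J$. The paper splices the resolution into short exact sequences and dimension-shifts, citing \cite[Proposition~3.2]{berkesch} for the key vanishing on the simple pieces; your hyperhomology spectral sequence is the same homological algebra in different clothing, and your K\"unneth factorization (the $\codim(G)$ Euler operators acting as zero after a lattice basis change, the identification of the residual partial Koszul complex with $\CC[x_{G^c}]$ tensored with the $G$-Euler--Koszul complex) is essentially the proof of the cited proposition. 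So you have taken the same route and unpacked the black box; the unpacking is correct except at one point.

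The step that fails as written is the identification of each summand $\CC\{(b+\ZZ G)\cap\cC_A(\beta)\}$ of $P^\beta_{G,J}$ with a $\ZZ^d$-graded shift of $\widetilde S_G$. The face identity gives degree set $b+\bigl[\ZZ G\cap\bigl((\Re\beta-b)+\RR_{\ge0}G\bigr)\bigr]$: the lattice points of the cone $\RR_{\ge0}G$ translated by a vector of $\RR G$ that need not lie in $\ZZ G$. Such a conic set is in general \emph{not} a single lattice translate of $\widetilde{\NN G}$; already for $\RR_{\ge 0}G$ the cone on $(1,0)$ and $(1,2)$ in $\ZZ^2$ and apex $(1/2,0)$, the set $\{(m,n)\in\ZZ^2\mid n\ge 0,\ 2m\ge n+1\}$ is not $\widetilde{\NN G}+u$ for any $u\in\ZZ^2$. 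Two standard repairs: (i) conic modules over normal affine semigroup rings are maximal Cohen--Macaulay, which is all that Theorem~\ref{thm:higherHHvanishing} requires; or (ii) embed $P^\beta_{(G,b)}\subseteq S_G[\del_G^{-1}](b)$ and observe that the cokernel has degree set $(b+\ZZ G)\setminus\cC_A(\beta)$, whose quasidegrees avoid $\beta$, so by Theorem~\ref{thm:HHvanishing} the two modules have the same Euler--Koszul homology at $\beta$, and $S_G[\del_G^{-1}]$ is a maximal Cohen--Macaulay weakly toric $S_G$-module. Repair (ii) is exactly the device used in the proof of Proposition~\ref{prop:mu-versus-rank} of this paper. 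With either fix your argument is complete and equivalent to the paper's.
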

\begin{proof}
Consider the cellular resolution of $P_J^\beta$ as constructed in \cite[(6.3)]{berkesch}:
\[
0 \rightarrow P_J \rightarrow I_J^0 \rightarrow I_J^1 \rightarrow \cdots \rightarrow I_J^{r}\rightarrow 0,
\] 
where $r+1$ is the cardinality of $J$. 
On the other hand, if 
$K_p \defeq \ker ( I_J^{p}\rightarrow I_J^{p+1})$ for $0\leq p \leq r-1$ and $K_r = I_J^r$, then there are short exact sequences: 
\[
0 \rightarrow P_J \rightarrow I_J^0 \rightarrow K_1 \rightarrow 0
\qquad\text{and}\qquad
0 \rightarrow K_p \rightarrow I_J^p \rightarrow K_{p+1} \rightarrow 0 
\ \ \text{for} \ 1\leq p \leq r-1.
\]
By the assumption on $J$, $I_J^{p}$ is a direct sum of simple ranking toric modules $P_G$ for faces $G$ of codimension at most $p+1$, so by \cite[Proposition 3.2]{berkesch},  
$\HH_q (I_J^p,\beta )=0$ 
for all $q\geq p+2$ and $p=0,\dots,r$. 
Therefore 
\[
\HH_q (P_J , \beta ) 
\cong \HH_{q+1}(K_1 , \beta )
\cong \cdots 
\cong \HH_{q+r-1}(K_{r-1}, \beta)
\cong \HH_{q+r}(I_J^{r} ,\beta )=0
\] 
for all $q\geq2$, as desired. 
\end{proof}

Note that if $\RR_{\geq 0} A$ is simplicial, then any set of facets of $A$ satisfies the property required in Proposition~\ref{prop:PJhigherVanishing-specialCase}. To the contrary, Example \ref{notsimplicial} does not satisfy this property.

\begin{theorem}
\label{thm:invertCM}
Let $\tau\in\Phi^L_A$ and assume that $\RR_{\geq 0}A$ is a simplicial cone. Then $\cE_A^{L,\tau} =\nothing$ if and only if $S_A^{\tau}$ is Cohen--Macaulay. 
\end{theorem}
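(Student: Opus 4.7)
The plan is to prove each direction of the equivalence separately. For the ``if'' direction, suppose $S_A^\tau$ is Cohen--Macaulay as a $\CC[\del]$-module. Since $S_A^\tau$ lies between $S_A$ and $\widetilde{S}_A$ (both of Krull dimension $d$), it also has dimension $d$, so its Cohen--Macaulayness forces $\Ext^{n-q}_{\CC[\del]}(S_A^\tau, \CC[\del]) = 0$ for every $q \in \{0, \ldots, d-1\}$. The right-hand side of the containment in Proposition~\ref{prop:inclusion} then collapses to the empty set, yielding $\cE_A^{L,\tau} = \nothing$. This argument does not use the simplicial hypothesis.

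For the converse, we first observe that under the simplicial hypothesis the set $\cE_A^{L,\tau}$ is automatically empty: Proposition~\ref{prop:PJhigherVanishing-specialCase} gives $\HH_q(P_J^\beta, \beta) = 0$ for $q \geq 2$, so the identities \eqref{mu-Q} and \eqref{mu-P-Q} force $\mu_{A,i}^{L,\tau}(\beta) = 0$ for every $i \geq 1$ and every $\beta$, and then \eqref{eqn:genericmuDef} gives $\mu_{A,0}^{L,\tau}(\beta) = \mu_A^{L,\tau}$ identically. Consequently, the substantive content of the converse is the unconditional assertion that $S_A^\tau$ is Cohen--Macaulay whenever $\RR_{\geq 0}A$ is simplicial, and this is what we actually set out to prove.

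To establish this assertion we invoke Theorem~\ref{thm:higherHHvanishing}, reducing the problem to checking that $\HH_i(S_A^\tau, \beta) = 0$ for every $i > 0$ and every $\beta$. From the short exact sequence $0 \to S_A^\tau \to \widetilde{S}_A \to Q_A^\tau \to 0$ with $Q_A^\tau \defeq \widetilde{S}_A / S_A^\tau$, combined with the Cohen--Macaulayness of $\widetilde{S}_A$ due to \cite{Hochster}, the long exact sequence in Euler--Koszul homology produces the isomorphism $\HH_i(S_A^\tau, \beta) \cong \HH_{i+1}(Q_A^\tau, \beta)$ for every $i \geq 1$. Hence it suffices to show $\HH_j(Q_A^\tau, \beta) = 0$ for all $j \geq 2$.

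The main obstacle is producing a cellular resolution of $Q_A^\tau$ analogous to \eqref{eq:cellularComplex}, whose $p$-th term is a direct sum of ranking toric modules supported on intersections of $p+1$ facets of $A$. In the simplicial case every such intersection has codimension exactly $p+1$, which will permit an application of \cite[Proposition 3.2]{berkesch} to conclude that the $q$-th Euler--Koszul homology of the $p$-th term vanishes for $q \geq p+2$; the spectral sequence argument from the proof of Proposition~\ref{prop:PJhigherVanishing-specialCase} will then deliver $\HH_j(Q_A^\tau, \beta) = 0$ for $j \geq 2$. Verifying this cellular structure on $Q_A^\tau$ will require a careful combinatorial analysis of the set $\widetilde{\NN A} \setminus \bigl((\NN A + \ZZ\tau) \cap \RR_{\geq 0}A\bigr)$, stratifying it according to the faces of $A$ that arise, and this is the most delicate step of the argument.
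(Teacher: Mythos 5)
Your ``if'' direction is fine and coincides with the paper's: Cohen--Macaulayness of $S_A^\tau$ kills the relevant $\Ext$ modules, and Proposition~\ref{prop:inclusion} finishes. The converse, however, rests on a false claim. You assert that the simplicial hypothesis forces $\cE_A^{L,\tau}=\nothing$ automatically, arguing that $\HH_q(P_J^\beta,\beta)=0$ for $q\geq 2$ (Proposition~\ref{prop:PJhigherVanishing-specialCase}) together with \eqref{mu-Q} and \eqref{mu-P-Q} gives $\mu_{A,0}^{L,\tau}(\beta)=\mu_A^{L,\tau}$ identically. But \eqref{mu-Q} says the multiplicity jump equals $\mu_{A,1}^{L,\tau}(P_J^\beta,\beta)-\mu_{A,0}^{L,\tau}(P_J^\beta,\beta)$; the vanishing of the homology in degrees $\geq 2$ controls $\mu_{A,i}^{L,\tau}(\beta)$ for $i\geq 1$ but says nothing about this difference in degrees $0$ and $1$. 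A concrete counterexample: for $A=\left(\begin{smallmatrix}1&1&1&1\\0&1&3&4\end{smallmatrix}\right)$, $\beta=(1,2)^t$, $L=F$, $\tau=\nothing$, the cone $\RR_{\geq0}A$ is simplicial yet $\beta$ is rank--jumping, so $\cE_A^{F,\nothing}\neq\nothing$; and since $S_A^{\nothing}=S_A$ is not Cohen--Macaulay here, the statement you reduce to --- that $S_A^\tau$ is \emph{always} Cohen--Macaulay when the cone is simplicial --- is also false. Everything downstream of this reduction (the proposed cellular resolution of $Q_A^\tau$, which you in any case leave unverified as ``the most delicate step'') is therefore aimed at proving something untrue.

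The paper's converse is a contrapositive of a different shape. If $S_A^\tau$ is not Cohen--Macaulay, Theorem~\ref{thm:higherHHvanishing} produces a $\beta$ with $\rank(\HH_0(S_A^\tau,\beta))>\vol(A)$. Proposition~\ref{prop:PJhigherVanishing-specialCase} is then used not to kill the jump but to force the jump, if nonzero, to come from a face $G\supseteq\tau$ of codimension at least $2$ with $(G,b)\in\max(J')$. One then moves to a \emph{generic} parameter $\beta'\in b+\CC G$, where $\max(\mathcal J(\beta'))$ involves only $G$, so Lemma~\ref{lem:simpleRT} computes the jump exactly as $|B_G^{\beta'}|\cdot(\codim(G)-1)\cdot\mu_G^{L,\tau}>0$, exhibiting $\beta'\in\cE_A^{L,\tau}$. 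If you want to salvage your write-up, replace your ``automatic emptiness'' observation with this genericity-and-simple-module argument.
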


\begin{proof}
The if direction is proven in Proposition~\ref{prop:inclusion}. 
By the definition of $S_A^{\tau}$ we have that 
\[
\operatorname{rank}(\HH_0 (S_A^{\tau},\beta ))=\vol (A) + \mu_{A,1}^{F,\nothing}(P_{J'})-\mu_{A,0}^{F,\nothing}(P_{J'}),
\]
where $J'\defeq\{(G,b)\in \mathcal{J}(\beta) \mid \tau \subseteq G\}$.
If $S_A^{\tau}$ is not Cohen--Macaulay, then by Theorem~\ref{thm:higherHHvanishing}, there exists $\beta \in \CC^d$ such that $\operatorname{rank}(\HH_0 (S_A^{\tau},\beta ))>\vol(A)$. 
Since $\RR_{\geq 0}A$ is simplicial, by Proposition~\ref{prop:PJhigherVanishing-specialCase} there must be a face 
$G$ of codimension at least $2$ such that $(G,b)\in \max(J')$. 
Thus, for generic $\beta' \in b + \CC G$, we have that $\operatorname{max}(\mathcal{J}(\beta '))=\{(G,b_1), \ldots, (G, b_r)\}$ with $r=|B_G^{\beta '}|$. Now, using Lemma~\ref{lem:simpleRT}, 
we have that 
\[
\mu_A^{L,\tau}(S_A ,\beta ')
  = \mu_A^{L,\tau}(S_A^{\tau} ,\beta ')
  = \mu_A^{L,\tau} + r 
    (\operatorname{codim}(G)-1)
    \cdot\mu_G^{L,\tau}>\mu_A^{L,\tau}
\] 
and thus 
$\beta'\in\cE_A^{L,\tau}\neq \nothing$.
\end{proof}

%%%%%%%%%%%%%%%%%%%%%%%%%%%%%%%%%%%%%%
\section{Upper-semicontinuity and convex filtrations}
%%%%%%%%%%%%%%%%%%%%%%%%%%%%%%%%%%%%%%
\label{sec:upperSC}

It was conjectured in \cite{slopes} that the multiplicities 
$\mu_{A,0}^{L,\tau}(\beta)$ 
are upper semicontinuous in $\beta\in\CC^d$ for any projective $L$ and $\tau\in\Phi_A^L$. 
We prove this conjecture when $L$ and $\tau$ satisfy certain conditions with respect to $A$ (see Theorem \ref{thm:upper-semicont-convex-case} and Corollary \ref{cor: upper-semicont-tau-convex-case}). We also prove Conjecture \ref{conj:exceptional-Ltau} in this setting when $\tau=\nothing$ (see Corollary \ref{cor:EAconvex}).

Given a submatrix $\sigma\subseteq A$ with rank $d$, denote by $E_{i}^\sigma$ the Euler operator associated with the $i$-th row of the matrix $\sigma$.  
Let $D_\sigma$ denote the Weyl algebra associated to the variables $x_\sigma = \{x_i\mid a_i\in \sigma\}$. 
We have that $\ZZ A=\ZZ^d =\bigoplus_{j=1}^{r} \Lambda_j$, where $r=[\ZZ^d:\ZZ\sigma]$ and $\Lambda_j=b_j +\ZZ \sigma$ for some $b_j\in\ZZ^d$ with  $j=1,\ldots,r$.

If $N$ is a $\ZZ^d$-graded $S_A$-module, then $N_{j}\defeq\bigoplus_{\alpha\in \Lambda_j} N_{\alpha}$ is a $S_\sigma$-module. Let $\KK^\sigma_\bullet(N,\beta)$ denote the direct sum over $j$ of the Euler--Koszul complexes on  
$D_\sigma \otimes_{\CC[\del_\sigma]} N_j (-b_j)$ given by the operators $\{E_i^\sigma - \beta_i + (b_j)_i\}_{i=1}^d$, where each such Euler--Koszul complex is placed in degree $b_j$. That is, 
\[
\KK^{\sigma}(N,\beta)\defeq\bigoplus_{j=1}^r \KK^{\sigma}(N_j(-b_j),\beta-b_j)(b_j),\] 
where the right-hand side Euler--Koszul complexes where defined before since $N_j(-b_j)$ is a $\ZZ\sigma$-graded $S_\sigma$-module. This definition is independent of the chosen elements $b_1,\ldots,b_r\in\ZZ^d$ by \eqref{eqn:shiftHH}.
With this setup, $D_{\sigma}\otimes N\cong \bigoplus_{j=1}^r (D_{\sigma}\otimes N_j)$, and  $\KK^\sigma_\bullet(N,\beta)$ is a $\ZZ^d$-graded complex of left $D_{\sigma}$-modules.
Set 
\[
\HH^\sigma_i(N,\beta) \defeq H_i(\KK^\sigma_\bullet(N,\beta)),
\] 
and note that these definitions make \eqref{eqn:shiftHH} and Theorem \ref{thm:HHvanishing} also valid for the homology modules $\HH^\sigma_i(N,\beta)$.

Let $L$ be a projective weight vector that induces a filtration on $D$ as considered in the introduction. 
We denote by $A^L$ the submatrix of $A$ whose columns belong to facets of $\Phi_A^L$. 
We say that $L$ is a \emph{convex} filtration with respect to $A$ if all facets of $\Phi_A^L$ are $F$-homogeneous and 
\begin{equation}
\bigcup_{\tau' \in \Phi_A^{L,d-1}}
\Delta_{\tau'} \label{convex-filtration}
\end{equation}
is a convex polytope, and thus equal to $\Delta_{A^L}$. 
Notice that, by the inclusion $S_{A^L}\subseteq S_A$, the ring $S_A$ is an $S_{A^L}$-module.

\begin{theorem}
\label{thm:upper-semicont-convex-case}
If $L$ is a convex filtration with respect to $A$, then
$\mu^{L,\nothing}_{A ,0} (\beta )
  =\rank (\HH^{A^L}_{0}(S_A , \beta))$. 
In particular, $\mu^{L,\nothing}_{A,0}(\beta)$ is upper-semicontinuous in $\beta$.
\end{theorem}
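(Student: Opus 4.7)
The plan is to identify $\mu^{L,\nothing}_{A,0}(\beta)$ with $\rank(\HH^{A^L}_0(S_A,\beta))$ by showing that both quantities are computed by the same cellular-resolution machinery, and then to deduce the upper-semicontinuity statement from the classical semicontinuity of holonomic rank of Euler--Koszul homology.

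The first step is to verify agreement of the generic values. By convexity of $L$, every facet of $\Phi_A^L$ is $F$-homogeneous and $\bigcup_{\tau'\in\Phi_A^{L,d-1}}\Delta_{\tau'}=\Delta_{A^L}$, so \eqref{eqn:F-homogeneous-mult} gives $\mu^{L,\nothing}_A=\vol_{\ZZ^d}(\Delta_{A^L})$. On the other side, $\KK^{A^L}_\bullet(S_A,\beta)$ is by construction the direct sum, over the $r=[\ZZ^d:\ZZ A^L]$ cosets $\Lambda_j=b_j+\ZZ A^L$ of $\ZZ^d$, of the Euler--Koszul complexes for the rank-$d$ matrix $A^L$ applied to the $\ZZ A^L$-graded $S_{A^L}$-module $(S_A)_j(-b_j)$. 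Since $A^L$ has rank $d$, each $(S_A)_j$ has rank one over $S_{A^L}$, and a coset-by-coset application of the generic-rank formula of~\cite{MMW} yields a total generic rank of $r\cdot\vol_{\ZZ A^L}(\Delta_{A^L})=\vol_{\ZZ^d}(\Delta_{A^L})=\mu^{L,\nothing}_A$.

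The second step compares the $\beta$-dependent corrections. Theorem~\ref{thm:computeMults} with $\tau=\nothing$ computes $\mu^{L,\nothing}_{A,0}(\beta)-\mu^{L,\nothing}_A$ via the spectral sequence of the cellular complex~\eqref{eq:cellularComplex} for $P^\beta_J$, whose simple ranking toric pieces contribute, by Lemma~\ref{lem:simpleRT}, the terms $|B^\beta_G|\binom{\codim(G)}{q}\mu^{L,\nothing}_G$ indexed by faces $G\preceq A$ with $\EE^\beta_G\neq\nothing$. I would run the analogous Berkesch construction coset-by-coset on each $(S_A)_j(-b_j)$ over $A^L$ to produce a parallel spectral sequence for $\rank(\HH^{A^L}_0(S_A,\beta))-\mu^{L,\nothing}_A$, whose pieces are of the same shape and are indexed by the same faces (after identifying $G\preceq A$ with $G\cap A^L\preceq A^L$). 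For each such $G$, the restriction of $L$ to the sub-umbrella $\Phi_G^L$ is again convex, so the generic identification from the first step applies to $G$ and matches the two contributions term by term, giving identical $E^1$-pages and differentials, hence the same abutment, which yields the desired multiplicity equality.

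The main obstacle will be verifying this parallelism at the level of ranking lattices and maximal-pair data: specifically, showing that $\EE^\beta_G$ and $\max(\mathcal{J}(\beta))$ for $A$ decompose compatibly across the $\ZZ A^L$-cosets into the ranking data controlling the Euler--Koszul homology of each $(S_A)_j(-b_j)$ over $A^L$, and checking that the convexity hypothesis is inherited by every face so the face-by-face matching is uniform. Once the multiplicity identification is in hand, upper-semicontinuity of $\beta\mapsto\mu^{L,\nothing}_{A,0}(\beta)$ follows from the upper-semicontinuity of the holonomic rank of $\HH^{A^L}_0$ of a finitely generated toric module, which is governed, as in Theorem~\ref{thm:HHvanishing} and~\cite[Theorem~6.6]{MMW}, by the quasidegrees of the appropriate Ext modules.
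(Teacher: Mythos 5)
Your strategy is essentially the one the paper uses, and you have correctly identified both the mechanism and the main technical obstacle, but as written the proof has an acknowledged gap precisely where the real work lies, plus a small but genuine misattribution in the final step.

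Your first step (generic value agreement) and the reduction to the cellular/spectral sequence machinery of Theorem~\ref{thm:computeMults} are right. However, the paper's route is cleaner than running two parallel coset-by-coset spectral sequences and matching $E^1$-pages: it applies $\HH^{A^L}_\bullet(-,\beta)$ to the single short exact sequence $0\to S_A\to S_A[\del^{-1}]\to Q\to 0$ (note $S_A[\del^{-1}]$ is maximal Cohen--Macaulay as a weakly toric $S_{A^L}$-module), reduces to $P^\beta$ via the identification $\HH_q^{A^L}(P^\beta,\beta)\cong\HH_q^{A^L}(Q,\beta)$, and then observes that the \emph{same} cellular resolution of $P^\beta$ serves for both rings since $P^\beta$ is simultaneously a direct sum of (weakly) toric modules over $S_A$ and over $S_{A^L}$. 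This collapses your two parallel spectral sequences into one object viewed two ways, so no ``identical $E^1$-page and differential'' matching is required. The obstacle you flag --- compatibility of ranking lattices and maximal-pair data across the $\ZZ A^L$-cosets, and inheritance of convexity by faces --- is precisely what the paper establishes in Remark~\ref{remark-ranking-lattices} (the decomposition $\EE_G^\beta=\bigsqcup_{b\in B_G^\beta}(b+\ZZ G)=\bigsqcup_{c\in B_{G^L}^\beta}(c+\ZZ G^L)$, so $|B_{G^L}^\beta|=|B_G^\beta|\cdot[\ZZ G:\ZZ G^L]$) and then in Lemma~\ref{lemma-mult-convex} (the term-by-term equality $\mu_{A,q}^{L,\nothing}(P_G^\beta,\beta)=\mu_{A^L,q}^{F,\nothing}(P_G^\beta,\beta)$, using Proposition~\ref{prop:mu-versus-rank}). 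You name this as ``the main obstacle'' but do not carry it out, so as a proof the proposal is incomplete at its crux.

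For the semicontinuity step, the justification you offer is off target: Theorem~\ref{thm:HHvanishing} and~\cite[Theorem~6.6]{MMW} are vanishing criteria for Euler--Koszul homology, not semicontinuity statements. What is needed is that $\HH_0^{A^L}(S_A,b)$ is a \emph{holonomic family} over $\CC^d$, which holds because $S_A$ is a finitely generated ($\RR_{\geq 0}A=\RR_{\geq 0}A^L$ and $\rank A^L=d$) toric $S_{A^L}$-module, by~\cite[Theorem~7.5]{MMW}; upper-semicontinuity of its fiberwise rank is then~\cite[Theorem~2.6]{MMW}. You should replace your citation accordingly.
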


Before proving Theorem~\ref{thm:upper-semicont-convex-case}, we first consider the simple case. 

\begin{prop}\label{prop:mu-versus-rank}
Let $L$ be a convex filtration of $D$ with respect to $A$ and $G\preceq A$.
Then for all $(G,b)\in \mathcal{J}(\beta)$,  
\[
\mu_G^{L,\nothing} 
  = \vol_{\ZZ G} (G^L)
  = \rank \left(
    \HH_{0}^{G^L}(P^{\beta}_{(G,b)},\beta)\right),
\] 
where $G^L$ denotes the submatrix of $A$ whose columns belong to facets of $\Phi_G^{L}$.
\end{prop}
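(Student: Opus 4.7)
For the first equality, apply~\eqref{eqn:F-homogeneous-mult} with $A$ replaced by the face $G$. Convexity of $L$ with respect to $A$ is inherited by $G$: the facets of $\Phi_G^L$ remain $F$-homogeneous (as sub-facets of the $F$-homogeneous facets of $\Phi_A^L$), and the union $\bigcup_{\tau'\in\Phi_G^{L,d'-1}}\Delta_{\tau'}$ (with $d'=\dim\CC G$) coincides with the convex polytope $\Delta_{G^L}$ inherited from $\Delta_{A^L}$. Formula~\eqref{eqn:F-homogeneous-mult} then gives $\mu_G^{L,\nothing}=\vol_{\ZZ G}(\Delta_{G^L})=\vol_{\ZZ G}(G^L)$.

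For the second equality, first note that $P^{\beta}_{(G,b)}$ is naturally a $\ZZ G$-graded $\CC[\del_G]$-module: operators $\del_i$ with $i\notin G$ shift $\ZZ G$-degrees off the support $b+\ZZ G\subseteq\beta+\CC G$ and hence act as zero. Decompose $\ZZ G=\bigsqcup_{j=1}^{r}(b_j+\ZZ G^L)$ with $r=[\ZZ G:\ZZ G^L]$, which is finite because the columns of $G^L$ span $\RR G$. Applied to this situation, the definition of $\HH^{G^L}$ from \S\ref{sec:upperSC} yields
\[
\HH^{G^L}_0(P^{\beta}_{(G,b)},\beta)=\bigoplus_{j=1}^{r}\HH^{G^L}_0\bigl((P^{\beta}_{(G,b)})_j(-b_j),\beta-b_j\bigr)(b_j).
\]
Each summand, viewed as a $\ZZ G^L$-graded $\CC[\del_{G^L}]$-module, will agree with a shifted copy of the saturation $\widetilde{S}_{G^L}$ up to a module of finite $\CC$-length. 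Since $\widetilde{S}_{G^L}$ is Cohen--Macaulay by Hochster's theorem, Theorem~\ref{thm:higherHHvanishing} forces all higher Euler--Koszul homology to vanish and guarantees $\rank\bigl(\HH^{G^L}_0(\widetilde{S}_{G^L},\gamma)\bigr)=\vol_{\ZZ G^L}(G^L)$ for every parameter $\gamma$. Finite-dimensional $\CC$-modules contribute trivially to the rank of Euler--Koszul homology, so each summand contributes exactly $\vol_{\ZZ G^L}(G^L)$, and summing the $r$ cosets gives $r\cdot\vol_{\ZZ G^L}(G^L)=\vol_{\ZZ G}(G^L)$.

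The main obstacle will be verifying the finite-length comparison claim rigorously. Concretely, one must show that, for each $j$, the intersection of the coset $b+b_j+\ZZ G^L$ with the translated cone $\Re\beta+\RR_{\geq0}A$ differs from a shifted copy of $\widetilde{\NN G^L}$ only by finitely many boundary lattice points. The key geometric input is that $\RR_{\geq0}G^L=\RR_{\geq0}G$ (convexity of $L$ forces the columns of $G^L$ to contain all extremal rays of $\RR_{\geq0}G$), so the relevant cones coincide inside the affine subspace $b+\RR G$, and the lattice discrepancy near the boundary is bounded by a compact region. The additivity of rank on short exact sequences, together with the fact that $\CC$-finite-dimensional modules have zero rank after localization at the generic point of $\Spec\CC[\del_{G^L}]$, will then yield the desired equality.
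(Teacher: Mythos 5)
Your first equality is fine and is exactly the paper's argument (apply \eqref{eqn:F-homogeneous-mult} to the face $G$, which is legitimate since convexity passes to $\Phi_G^L$). The gap is in the second equality: the ``finite-length comparison'' you defer to the end is not a routine verification --- as stated it is false whenever $\dim \CC G\geq 2$. The set $\PP^\beta_{(G,b)}$ consists of the lattice points of $b+\ZZ G$ lying in the polyhedron $(\Re\beta+\RR_{\geq0}A)\cap(b+\RR G)$, whose recession cone is $\RR_{\geq0}G$ but which is not a translate of that cone; its symmetric difference with any single translate $c+\RR_{\geq0}G$ contains an unbounded neighborhood of the boundary facets (already for $\RR_{\geq0}^2$ versus $(1,1)+\RR_{\geq0}^2$ the difference contains the infinite strips $\{x<1\}$ and $\{y<1\}$). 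So a coset summand of $P^\beta_{(G,b)}$ does not agree with a shifted copy of $\widetilde{S}_{G^L}$ up to a module of finite $\CC$-length, and the step ``finite-dimensional modules contribute zero rank'' has nothing to apply to.

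The correct repair --- and the route the paper takes --- is to compare with the localization rather than with the saturation: one has $P^\beta_{(G,b)}\subseteq S_G[\del_G^{-1}](b)\cong\bigoplus_{\alpha\in\Lambda}S_{G^L}[\del_{G^L}^{-1}](\alpha)$ with $|\Lambda|=[\ZZ G:\ZZ G^L]$, and the quotient $S_G[\del_G^{-1}](b)/P^{\beta}_{(G,b)}$ has degree set $(b+\ZZ G)\setminus\PP^\beta_{(G,b)}$, which by the definition of $\cC_A(\beta)$ has $\beta$ outside its quasidegree set. Theorem \ref{thm:HHvanishing} then kills all Euler--Koszul homology of that quotient at $\beta$, so $\HH^{G^L}_{\bullet}(P^\beta_{(G,b)},\beta)\cong\HH^{G^L}_{\bullet}(S_G[\del_G^{-1}](b),\beta)$, and the Cohen--Macaulayness of $S_{G^L}[\del_{G^L}^{-1}]$ together with Theorem \ref{thm:higherHHvanishing} gives the count $[\ZZ G:\ZZ G^L]\cdot\vol_{\ZZ G^L}(G^L)=\vol_{\ZZ G}(G^L)$. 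Your coset decomposition and the final volume bookkeeping coincide with the paper's; what is missing is the quasidegree/vanishing argument that must replace your unprovable finite-length claim.
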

\begin{proof} 
The first equality follows from Theorem~\ref{thm:swFormula} and \eqref{eqn:F-homogeneous-mult} since $L$ is convex. 
For the second equality, by definition of the $(G,L)$-umbrella $\Phi_G^L$, the submatrix $G^L$ of $G$ is such that
$\RR_{\geq 0}G = \RR_{\geq 0}G^L$ 
and $\rank(G)=\rank(G^L)$. 
This implies that $S_G$ is a toric $S_{G^L}$-module. 
Further, 
\[
 P^{\beta}_{(G,b)}  \subseteq S_G [\del_G^{-1}](b)
   \cong \bigoplus_{\alpha \in \Lambda}
   S_{G^L}[\del_{G^L}^{-1}](\alpha),
 \]
where $\Lambda$ is a finite subset of $b + \ZZ G$ of cardinality $[\ZZ G:\ZZ G^L]$. Since 
\[
\deg(S_G [\del_G^{-1}](b)/P^{\beta}_{(G,b)})=(b+\ZZ G)\setminus \PP^\beta_{(G,b)},
\] 
it follows from  the definition of $C_A (\beta)$ that the parameter $\beta$ does not belong to the quasidegrees set of the weakly toric module $S_G [\del_G^{-1}](b)/P^{\beta}_{(G,b)}$. Thus, since $S_{G^L}[\del_{G^L}^{-1}]$ is a Cohen--Macaulay $S_{G^L}$-module, by Theorem~\ref{thm:HHvanishing} and Theorem~\ref{thm:higherHHvanishing}, 
$\HH_{i}^{G^L}(P^{\beta}_{(G,b)},\beta)=0$ 
for all $i\geq 1$ and 
\[
\rank\left( \HH_{0}^{G^L}(P^{\beta}_{(G,b)},\beta) \right)
  = [\ZZ G:\ZZ G^L]\cdot\vol_{\ZZ G^L}(G^L)
  = \vol_{\ZZ G}(G^L).
\qedhere
\]
\end{proof}

\begin{remark}\label{remark-ranking-lattices}
Notice that any weakly toric $S_A$-module $M\subseteq S_A [\partial_A^{-1}]$ can be viewed as a weakly toric $S_{A^L}$-module. Indeed, since $A^L$ and $A$ have the same rank, then $\ZZ A = \bigoplus_{j=1}^r (b_j + \ZZ A^L )$ for some $b_j \in \ZZ A$ with $j=1, \ldots , r$. Thus   
$S_A [\partial_A^{-1}] =  \bigoplus_{j=1}^r  S_{A^L}[\partial_{A^L}^{-1}](b_j)$ as $S_{A^L}$-modules. Setting  
$M_j \defeq M \cap S_{A^L}[\partial_{A^L}^{-1}](b_j)$, then $M$ is the direct sum of the weakly toric $S_{A^L}$-modules $M_j$. 
Moreover, for any face 
$G \preccurlyeq A$, 
\begin{align}\label{ranking-lattice-decomposition}
\EE_G^\beta =\bigsqcup_{b\in B_G^\beta} (b+\ZZ G)= \bigsqcup_{c \in B_{G^L}^{\beta}}  (c +\ZZ G^L),
\end{align} 
where $B_G^{\beta}$ and $B_{G^L}^{\beta}$ is a set of lattice representatives (see (\ref{ranking-face-lattice})). 
\end{remark}

\begin{lemma}\label{lemma-mult-convex}
The module $P^{\beta}$ is a direct sum of toric $S_{A^L}$-modules, and for any face $G\preceq A$ and $q\geq 0$, 
\[
\mu_{A,q}^{L,\nothing}(P_G^{\beta},\beta )=\mu_{A^L, q}^{F,\nothing}(P_G^{\beta},\beta).
\]
\end{lemma}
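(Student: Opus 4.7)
The plan is to handle the direct-sum decomposition first and then reduce the multiplicity identity to simple ranking toric summands, where it is computed via Lemma \ref{lem:simpleRT} and Proposition \ref{prop:mu-versus-rank} together with a lattice-index bookkeeping.

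For the decomposition, I would first invoke Remark \ref{remark-ranking-lattices} applied to the weakly toric $S_A$-module $P^\beta \subseteq S_A[\partial_A^{-1}]$: this yields $P^\beta = \bigoplus_{j=1}^r (P^\beta)_j$ as weakly toric $S_{A^L}$-modules, indexed by the cosets of $\ZZ A^L$ in $\ZZ A$. To upgrade from weakly toric to toric, I would use the finer identity (\ref{ranking-lattice-decomposition}): for each $G\preceq A$, $\EE_G^\beta$ splits as a finite disjoint union of $\ZZ G^L$-cosets, and since $\RR_{\geq 0}G=\RR_{\geq 0}G^L$ with $G^L\preceq A^L$, the intersection with $\cC_A(\beta)=\cC_{A^L}(\beta)$ inside each coset admits a finite filtration by $\ZZ^d$-graded translates of $S_{G^L}$. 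Summing these finite filtrations across the finitely many cosets gives a toric $S_{A^L}$-filtration of each $(P_G^\beta)_j$; the corresponding claim for $P^\beta$ then follows, since the cellular complex used in Theorem \ref{thm:computeMults} presents $P^\beta$ from finitely many $P_G^\beta$.

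For the multiplicity equality, the $L$-side is immediate from Lemma \ref{lem:simpleRT}:
\[
\mu_{A,q}^{L,\nothing}(P_G^\beta,\beta)=|B_G^\beta|\cdot \binom{\codim_A(G)}{q}\cdot \mu_G^{L,\nothing}.
\]
For the $F$-side, the decomposition from the previous paragraph exhibits $P_G^\beta$ as a direct sum of simple ranking toric $S_{A^L}$-modules for the face $G^L\preceq A^L$, one summand per $\ZZ G^L$-coset appearing in $\EE_G^\beta$. Using the additivity of Euler--Koszul homology over direct sums and a second application of Lemma \ref{lem:simpleRT} over $(A^L,F)$ gives
\[
\mu_{A^L,q}^{F,\nothing}(P_G^\beta,\beta)=|B_{G^L}^\beta|\cdot \binom{\codim_{A^L}(G^L)}{q}\cdot \mu_{G^L}^{F,\nothing}.
\]
The two expressions are then matched via three inputs: $\codim_A(G)=\codim_{A^L}(G^L)$ (since $\rank A=\rank A^L$ and $\CC G=\CC G^L$); $|B_{G^L}^\beta|=[\ZZ G:\ZZ G^L]\cdot|B_G^\beta|$ from (\ref{ranking-lattice-decomposition}); and $\mu_G^{L,\nothing}=[\ZZ G:\ZZ G^L]\cdot \mu_{G^L}^{F,\nothing}$, which is the equality $\vol_{\ZZ G}(G^L)=[\ZZ G:\ZZ G^L]\cdot \vol_{\ZZ G^L}(G^L)$ extracted from the proof of Proposition \ref{prop:mu-versus-rank}.

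The main obstacle is verifying that the $\ZZ A^L$-coset summands of $P_G^\beta$ are genuinely simple ranking toric $S_{A^L}$-modules for the face $G^L$ at an appropriately translated parameter, rather than ranking toric modules for some other face of $A^L$. This reduces to comparing the excluded regions $\NN A+\ZZ G$ and $\NN A^L+\ZZ G^L$ used to define the respective ranking lattices, and exploiting $\RR_{\geq 0}A=\RR_{\geq 0}A^L$ together with $\RR_{\geq 0}G=\RR_{\geq 0}G^L$ to identify the two ranking-lattice structures on the nose inside each relevant $\ZZ A^L$-coset. Once this identification is in place, the remainder is a routine lattice-index calculation.
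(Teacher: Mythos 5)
Your argument is correct and follows the same route as the paper: invoke Remark \ref{remark-ranking-lattices} and $\cC_A(\beta)=\cC_{A^L}(\beta)$ for the toric decomposition, then reduce to Lemma \ref{lem:simpleRT} on both sides and match the two formulas via \eqref{ranking-lattice-decomposition} and the volume identity $\vol_{\ZZ G}(G^L)=[\ZZ G:\ZZ G^L]\vol_{\ZZ G^L}(G^L)$ extracted from Proposition \ref{prop:mu-versus-rank}. The ``main obstacle'' you flag (identifying the coset summands as simple ranking toric $S_{A^L}$-modules for $G^L$) is exactly the content already packaged into \eqref{ranking-lattice-decomposition}, so the paper is simply more terse where you are more explicit.
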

\begin{proof}
The decomposition of $M=S_A$ as a direct sum of weakly toric $S_A^L$-modules $M_j$ given in Remark \ref{remark-ranking-lattices} induces a decomposition of 
$S_A [\del_A^{-1} ]/M$ as a direct sum of the weakly toric $S_{A}^L$-modules $S_{A^L}[\partial_{A^L}^{-1}](b_j)/ M_j$. Then, by the two short exact sequences in the proof of \cite[Proposition 5.10]{berkesch}, $P^{\beta}$ is a direct 
sum of weakly toric $S_{A^L}$-modules. Moreover, since $\PP^{\beta}=\EE^\beta \cap \cC_A(\beta)$ and $\cC_A(\beta)=\cC_{A^L}(\beta)$, it follows 
that $P^{\beta}$ is a direct sum of toric $S_{A}^L$-modules.

On the other hand, if $G$ is a face of $A$, then by (\ref{ranking-lattice-decomposition}), $|B_G^{\beta}| [\ZZ G : \ZZ G^L ]=  |B_{G^L}^{\beta}|$. Thus, using Lemma \ref{lem:simpleRT} and Proposition \ref{prop:mu-versus-rank},
\begin{align*}
\mu_{A,q}^{L,\nothing}(P_G^{\beta},\beta )
&= |B^\beta_{G}|\cdot \binom{\codim(G)}{q} \cdot \vol_{\ZZ G} (G^L ) \\
&= |B_{G^L}^{\beta}|\cdot \binom{\codim(G)}{q} \cdot \vol_{\ZZ G^L} ( G^L )\\ &=\mu_{A^L, q}^{F,\nothing}(P_G^{\beta},\beta).\qedhere
\end{align*}
\end{proof}

The proof of Theorem~\ref{thm:upper-semicont-convex-case} makes use of the notion of a \emph{holonomic family} from \cite[Definition~2.1]{MMW}, which we now recall. 
While defined over any algebraic variety $B$ with structure sheaf $\cO_B$, we will need only the case when $B = \AA_\CC^d$, affine $d$-space over $\CC$. 

If $\beta\in B$, denote by $p_\beta$ the prime ideal (sheaf) of $\beta$ and set $\kappa_\beta = \cO_{B,\beta}/p_\beta\cO_{B,\beta}$,  the residue field of the stalk $\cO_{B,\beta}$. 
A \emph{coherent sheaf} of $(D\otimes_\CC\cO_B)$-modules is a quasi-coherent sheaf of $\cO_B$-modules on $B$ whose sections over each open affine subset $U\subset B$ are finitely generated over the ring of global sections $H^0(B,D\otimes_\CC\cO_U)$.
Let $\cO_B(x)$ denote the localization at $\<0\>\in\Spec(\CC[x])$ of $\cO_B[x] \defeq \CC[x]\otimes_\CC\cO_B \subset D\otimes_\CC\cO_B$. 
The sheaf-spectrum of $\cO_B(x)$ is the base-extended scheme $B(x) \defeq \Spec\,\CC(x)\times_{\Spec\,\CC}B$. 

A \emph{holonomic family} over $B$  is a coherent sheaf $\widetilde{\MM}$ of left $(D\otimes_\CC\cO_B)$-modules such that 
\begin{enumerate}
\vspace{-2.75mm}
\item the fibers $\MM_\beta = \widetilde{\MM}\otimes_{\cO_B}\kappa_\beta$ are holonomic $D$-modules for all $\beta\in B$, and 
\item $\cO_B(x)\otimes_{\cO_B}\widetilde{\MM}$ is coherent on $B(x)$. 
\end{enumerate}

\begin{proof}[Proof of Theorem~\ref{thm:upper-semicont-convex-case}]
Since $S_A [\del^{-1}]$ is a maximal Cohen--Macaulay weakly toric $S_{A^L}$-module, 
$\HH_{i}^{A^L}(S_A [\del^{-1}] ,\beta)=0$ for all $i>0$ by Theorem~\ref{thm:higherHHvanishing}. 
Thus, applying Euler--Koszul homology with respect to $A^L$ to the short exact sequence
\[
0\to S_A\to S_A [\del^{-1}] \to Q\to 0
\] 
and using that $\HH_{q}^{A^L}(P^{\beta},\beta)\simeq \HH_{q}^{A^L}(Q,\beta)$ (see the proof of \cite[Proposition 5.10]{berkesch}, which can be adapted to this case), it follows that
\[
\rank\left( \HH_{0}^{A^L}(S_A,\beta)\right)
  = \rank\left(\HH_{0}^{A^L}(S_A [\del^{-1}],\beta)\right)
	+ \mu_{A^L,1}^{F,\nothing}(P^\beta,\beta) 
	- \mu_{A^L,0}^{F,\nothing}(P^\beta,\beta).
\] 
The proofs of \cite[Theorem 6.6]{berkesch} and Theorem~\ref{thm:computeMults} and the induction argument in the proof of \cite[Proposition 6.18]{berkesch} reduces the computation of 
\[
\mu_{A^L,1}^{F,\nothing}(P^\beta,\beta) 
	- \mu_{A^L,0}^{F,\nothing}(P^\beta,\beta)
	\qquad 
\left(\text{and respectively } \mu_{A,1}^{L,\nothing}(P^\beta,\beta) - \mu_{A,0}^{L,\nothing}(P^\beta,\beta)\right)
\] 
to that of $\mu_{A^L,q}^{F,\nothing}(N,\beta)$ (and respectively $\mu_{A,q}^{L,\nothing}(N,\beta)$) for $q\geq 0$ and simple toric modules 
	$N=P_G^\beta$ with $\EE_G^{\beta}\neq \nothing$. 
Thus, by Lemma \ref{lemma-mult-convex},
\[
\mu_{A^L,1}^{F,\nothing}(P^\beta,\beta) 
	- \mu_{A^L,0}^{F,\nothing}(P^\beta,\beta)
  = \mu_{A,1}^{L,\nothing}(P^\beta,\beta) 
	- \mu_{A,0}^{L,\nothing}(P^\beta,\beta)
  = \mu_{A,0}^{L,\nothing}(\beta) 
    - \mu_{A}^{L,\nothing}
\]
which yields the desired equality.

Finally, since $S_A$ is a toric $S_{A^L}$-module,  $\HH_{0}^{A^L}(S_A,b)$ is a holonomic family by ~\cite[Theorem 7.5]{MMW}. 
Hence ~\cite[Theorem 2.6]{MMW} guarantees that
\[
\beta\mapsto \rank\left( \HH_0^{A^L}(S_A,\beta)\right)
\] 
is an upper semicontinuous function. 
\end{proof}

Theorem~\ref{thm:upper-semicont-convex-case} provides a way to prove Conjecture~\ref{conj:exceptional-Ltau} 
when $L$ is a convex filtration of $D$ with respect to $A$ and $\tau=\emptyset$. 

\begin{cor}
\label{cor:EAconvex}
If $L$ is a convex filtration of $D$ with respect to $A$, then 
\[
\cE_A^{L,\nothing} = 
-\,\qdeg\left(\bigoplus_{q=0}^{d-1}\Ext_{\CC[\del]}^{n-q}(S_A, \CC[\del])(-\varepsilon_A)\right).
\]
\end{cor}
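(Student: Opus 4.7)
The plan is to identify $\cE_A^{L,\nothing}$ with the rank-jump locus of the Euler--Koszul homology of $S_A$ taken with respect to the submatrix $A^L$, apply the Matusevich--Miller--Walther rank-jump theorem to obtain a description in terms of $\Ext$-quasidegrees over $\CC[\del_{A^L}]$, and then transport this description to $\CC[\del]$ via graded local duality.

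First, Theorem~\ref{thm:upper-semicont-convex-case} gives $\mu_{A,0}^{L,\nothing}(\beta)=\rank\HH_0^{A^L}(S_A,\beta)$ for every $\beta$, and so the generic value $\mu_A^{L,\nothing}$ is the generic rank of $\HH_0^{A^L}(S_A,-)$. Thus $\cE_A^{L,\nothing}$ is precisely the locus where this rank exceeds its generic value, which in turn (by the Euler characteristic of the Euler--Koszul complex being independent of $\beta$) equals the locus where $\HH_i^{A^L}(S_A,\beta)\neq 0$ for some $i>0$. The convexity of $L$ ensures $\RR_{\geq 0}A=\RR_{\geq 0}A^L$, so $S_A$ is a finitely generated toric $S_{A^L}$-module (cf.\ Lemma~\ref{lemma-mult-convex}), and the MMW rank-jump theorem~\cite[Theorem~6.6]{MMW} applied to $S_A$ as an $S_{A^L}$-module produces
\[
\cE_A^{L,\nothing} \;=\; -\qdeg\!\left(\bigoplus_{q=0}^{d-1}\Ext^{n'-q}_{\CC[\del_{A^L}]}(S_A,\CC[\del_{A^L}])(-\varepsilon_{A^L})\right),
\]
where $n'=|A^L|$ and $\varepsilon_{A^L}=\sum_{a_i\in A^L}a_i$.

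The remaining step is to reconcile this description with the one in the statement, where the $\Ext$ is computed over $R=\CC[\del]$ with shift $\varepsilon_A$. Graded local duality, applied over $R$ (with canonical module $R(-\varepsilon_A)$) and over $R'=\CC[\del_{A^L}]$ (with canonical module $R'(-\varepsilon_{A^L})$), identifies each shifted $\Ext$ module with the graded Matlis dual of the corresponding local cohomology module of $S_A$. The key algebraic observation is that the two irrelevant ideals have the same radical on $S_A$: since $\RR_{\geq 0}A=\RR_{\geq 0}A^L$, every column $a_j$ with $j\notin A^L$ lies in $\QQ_{\geq 0}A^L$, and thus some power $\del_j^k$ acts on $S_A$ as an element of $\mm_{R'}\cdot S_A$. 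Local cohomology depends only on the radical, so $H^q_{\mm_R}(S_A)=H^q_{\mm_{R'}}(S_A)$ for all $q$, and passing to quasidegrees finishes the proof.

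The main obstacle is the bookkeeping of $\ZZ^d$-graded shifts in the last step: one must verify that the shifts by $\varepsilon_A$ and $\varepsilon_{A^L}$, combined with the sign reversal from graded Matlis duality, collapse to the common set of local cohomology quasidegrees on both sides. A secondary technical point is to ensure that the form of the MMW rank-jump theorem employed applies to $S_A$ regarded as a toric $S_{A^L}$-module rather than only to $S_A$ over $\CC[\del]$ in the homogeneous setup.
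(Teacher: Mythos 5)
Your argument follows essentially the same route as the paper's proof: invoke Theorem~\ref{thm:upper-semicont-convex-case} to replace $\mu_{A,0}^{L,\nothing}(\beta)$ by $\rank\HH_0^{A^L}(S_A,\beta)$, apply the Matusevich--Miller--Walther characterization of the rank-jump locus of this holonomic family, observe that $\sqrt{\mathfrak m_{A^L}\,S_A}=\mathfrak m_{S_A}$ (since $\RR_{\geq 0}A=\RR_{\geq 0}A^L$) so that the relevant local cohomology is insensitive to whether one works over $\CC[\del_{A^L}]$ or $\CC[\del]$, and transfer to $\Ext$ via graded local duality; the paper does exactly this, merely invoking the local-cohomology form of the MMW theorem directly rather than round-tripping Ext $\to$ local cohomology $\to$ Ext as you do. One small correction: the rank-jump description you want is \cite[Theorem~9.1]{MMW}, not \cite[Theorem~6.6]{MMW} (the latter is the Cohen--Macaulay vanishing criterion, quoted as Theorem~\ref{thm:higherHHvanishing} here); and the aside identifying the rank-jump locus with $\{\beta : \HH_i^{A^L}(S_A,\beta)\neq 0\text{ for some }i>0\}$ is not needed for the argument and would itself require justification beyond Euler-characteristic constancy.
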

\begin{proof}
By the proof of Theorem~\ref{thm:upper-semicont-convex-case}, 
$\HH_{0}^{A^L}(S_A,b)$ is a holonomic family and $\cE_A^{L,\nothing}=\cE_{A^L}^{F,\nothing}$, and thus by \cite[Theorem 9.1]{MMW},
\[
\cE_A^{L,\nothing} = 
\overline{\deg\left(\bigoplus_{i=0}^{d-1}H_{\mathfrak{m}_L}^i(S_A)\right)}^\text{Zariski},
%\operatorname{qdeg}(H_{\mathfrak{m}}^{<d}(S_A)),
%
%-\bigcup_{i=0}^{d-1}\qdeg(\Ext_{\CC[\del]}^{n-d}(S_A, \CC[\del])(-\varepsilon_A)),
\]
where $\mathfrak{m}_L$ denotes the maximal homogeneous ideal in $S_{A^L}$. 
However, since 
$\RR_{\geq 0} A= \RR_{\geq 0} A^L$, 
the radical of the extended ideal $\mathfrak{m}_L S_A$ in $S_A$ equals $\mathfrak{m}$.
Therefore, by applying graded Matlis duality, we obtain the desired result. 
\end{proof}

Let $L$ be a filtration on $D$ induced by a projective weight vector. For $\tau \in \Phi_A^{L}$, we denote by 
$A^{L,\tau}$ the submatrix of $A$ whose columns belong to facets 
$\tau' \in \Phi_A^{L,d-1}$ such that $\tau \subseteq \tau'$. We say that $L$ is $\tau$-\emph{convex} if 
all facets of $\Phi_A^L$ containing $\tau$ are $F$-homogeneous and the polytope
\begin{equation} 
\bigcup_{\tau \subseteq \tau' \in \Phi_A^{L,d-1}}
\Delta_{\tau '}  \label{convex-filtration}
\end{equation}
is convex, and thus equal to $\Delta_{A^{L,\tau}}$.

We recall that a subset $\eta '\subseteq A$ is said to be a \emph{pyramid} over $\eta\subseteq\eta'$ if  
\[
\rank_{\ZZ} (\ZZ \eta )+|\eta'\setminus \eta|=d,
\] 
where we denote by $|\lambda|$ the cardinality of a set $\lambda$.

Theorem \ref{thm:upper-semicont-convex-case} can now be generalized as follows.

\begin{cor}\label{cor: upper-semicont-tau-convex-case}
If $L$ induces a $\tau$-convex filtration for some $\tau \in \Phi_A^{L}$ and any $\tau ' \in \Phi_A^{L,d-1}$ such that $\tau\subseteq \tau'$ is a pyramid over $\tau '\setminus \tau$, then 
\[
\mu^{L,\tau}_{A ,0} (\beta )
  =\rank\left( \HH^{A^{L,\tau}}_{0}(S_A , \beta)\right). 
\]
In particular, $\mu^{L,\tau}_{A ,0} (\beta )$ is upper-semicontinuous in $\beta$. 
\end{cor}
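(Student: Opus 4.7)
The plan is to adapt the proof of Theorem~\ref{thm:upper-semicont-convex-case} to the $\tau$-localized setting, replacing the submatrix $A^L$ by $A^{L,\tau}$ throughout and the component $T_X^*X$ by $\overline{C_A^\tau}$. First I would establish the $\tau$-analog of Proposition~\ref{prop:mu-versus-rank}: for any face $G\preceq A$ containing $\tau$ and any $(G,b)\in\mathcal{J}(\beta)$,
\[
\mu_G^{L,\tau} \;=\; \rank\!\bigl(\HH_0^{G^{L,\tau}}(P^\beta_{(G,b)},\beta)\bigr).
\]
The pyramid hypothesis on each facet $\tau'\supseteq\tau$ of $\Phi_G^L$ is crucial here: it ensures that $\tau$ is linearly independent from $\tau'\setminus\tau$ inside $\tau'$, so that the formula of Theorem~\ref{thm:swFormula} collapses (after the projection $\pi\colon\ZZ\tau'\twoheadrightarrow\ZZ\tau'/(\ZZ\tau'\cap\QQ\tau)$) to the normalized volume of $\pi(\tau'\setminus\tau)$. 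The $\tau$-convexity of $L$ then glues these local volumes into $\vol(G^{L,\tau})$, while the rank side follows from Cohen--Macaulayness of the localization $S_{G^{L,\tau}}[\del_{G^{L,\tau}}^{-1}]$, precisely as in Proposition~\ref{prop:mu-versus-rank}. For faces $G$ not containing $\tau$, Lemma~\ref{lem:simpleRT} already gives $\mu^{L,\tau}_{A,q}(P^\beta_G,\beta)=0$ for all $q$, so such $G$ drop out after localizing at $P_\tau$ and applying $\gr^L$.

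Next I would prove the $\tau$-analog of Lemma~\ref{lemma-mult-convex}. Using the lattice decomposition of Remark~\ref{remark-ranking-lattices} with $A^{L,\tau}$ in place of $A^L$, the weakly toric module $P^\beta$ decomposes as a direct sum of toric $S_{A^{L,\tau}}$-modules, and for each $G\supseteq \tau$ the equality
\[
\mu^{L,\tau}_{A,q}(P^\beta_G,\beta) \;=\; \mu^{F,\tau}_{A^{L,\tau},q}(P^\beta_G,\beta)
\]
holds for all $q\geq 0$ by an index computation parallel to the proof of Lemma~\ref{lemma-mult-convex}, combined with the rank identity of the previous step. I would then apply $\HH^{A^{L,\tau}}_\bullet(-,\beta)$ to the short exact sequence $0\to S_A\to S_A[\del^{-1}]\to Q\to 0$, use the vanishing $\HH^{A^{L,\tau}}_i(S_A[\del^{-1}],\beta)=0$ for $i>0$ coming from maximal Cohen--Macaulayness of $S_A[\del^{-1}]$ over $S_{A^{L,\tau}}$, the isomorphism $\HH^{A^{L,\tau}}_q(P^\beta,\beta)\cong \HH^{A^{L,\tau}}_q(Q,\beta)$ from the proof of \cite[Proposition~5.10]{berkesch}, and the reduction to simple ranking toric modules via the cellular resolution~\eqref{eq:cellularComplex} from \cite[Theorem~6.6, Proposition~6.18]{berkesch} and Theorem~\ref{thm:computeMults}. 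Together with \eqref{mu-Q}, this yields
\[
\rank\!\bigl(\HH_0^{A^{L,\tau}}(S_A,\beta)\bigr) \;=\; \mu^{L,\tau}_A + \mu^{L,\tau}_{A,1}(P^\beta,\beta) - \mu^{L,\tau}_{A,0}(P^\beta,\beta) \;=\; \mu^{L,\tau}_{A,0}(\beta),
\]
as desired. Upper-semicontinuity then follows verbatim from the holonomic family theorems \cite[Theorems~2.6 and~7.5]{MMW} applied to the toric $S_{A^{L,\tau}}$-module $S_A$.

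The main obstacle I anticipate is verifying the maximal Cohen--Macaulayness (equivalently, the higher Euler--Koszul vanishing) of $S_A[\del^{-1}]$ viewed as an $S_{A^{L,\tau}}$-module. This is genuinely subtler than in the convex case of Theorem~\ref{thm:upper-semicont-convex-case}, because $A^{L,\tau}$ need not satisfy $\RR_{\geq 0}A^{L,\tau}=\RR_{\geq 0}A$; the pyramid hypothesis is exactly what replaces that equality with a transversality statement strong enough to produce the required depth. A secondary technical point is to ensure that the cellular resolution~\eqref{eq:cellularComplex}, after being localized at $P_\tau$ and having $\gr^L$ applied, behaves compatibly with the reindexing by the sublattice $\ZZ A^{L,\tau}\subseteq \ZZ A$, but this is routine once the first step is in hand.
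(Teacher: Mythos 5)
Your overall strategy matches the paper's: establish a $\tau$-analog of Proposition~\ref{prop:mu-versus-rank} (using the pyramid hypothesis to collapse Theorem~\ref{thm:swFormula} and $\tau$-convexity to glue the local volumes into $\vol_{\ZZ G}(\Delta_{G^{L,\tau}})$), then a $\tau$-analog of Lemma~\ref{lemma-mult-convex}, then run the short-exact-sequence and cellular-resolution argument of Theorem~\ref{thm:upper-semicont-convex-case} with $P^\beta_J$ for $J=\{(G,b)\mid\tau\subseteq G\}$, and finish with a holonomic-family theorem. Two of your steps, however, are off.

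First, the final upper-semicontinuity step contains a genuine gap. You write that \cite[Theorems~2.6 and~7.5]{MMW} apply because $S_A$ is a ``toric $S_{A^{L,\tau}}$-module.'' It is not: since $A^{L,\tau}$ only collects the columns in facets \emph{containing} $\tau$, in general $\RR_{\geq 0}A^{L,\tau}\subsetneq\RR_{\geq 0}A$, so $\NN A$ is infinite over $\NN A^{L,\tau}$ and $S_A$ is merely \emph{weakly} toric over $S_{A^{L,\tau}}$ (the direct-sum decomposition of Remark~\ref{remark-ranking-lattices} lands in weakly toric pieces). Consequently \cite[Theorem~7.5]{MMW} is not directly applicable. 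The paper instead invokes \cite[Remark~5.5.(5)]{ekdi}, which gives that $\HH_0^{A^{L,\tau}}(S_A,\beta)$ is \emph{locally} a holonomic family in the analytic topology, and combines this with \cite[Theorem~2.6]{MMW} and Theorem~\ref{thm:computeMults} to obtain upper-semicontinuity. Without this substitution the conclusion does not follow.

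Second, the ``main obstacle'' you flag is not actually where the difficulty lies, and its proposed explanation via the pyramid hypothesis is misattributed. The vanishing $\HH_i^{A^{L,\tau}}(S_A[\del^{-1}],\beta)=0$ for $i>0$ follows readily from the decomposition $\CC[\ZZ^d]=\bigoplus_j S_{A^{L,\tau}}[\del_{A^{L,\tau}}^{-1}](b_j)$ over coset representatives of $\ZZ A^{L,\tau}$ in $\ZZ^d$, each summand being maximal Cohen--Macaulay, and Theorem~\ref{thm:higherHHvanishing}; no transversality or depth argument from the pyramid hypothesis is needed there. The pyramid hypothesis is used only in the first step, to reduce the formula of Theorem~\ref{thm:swFormula} to a volume of $\Delta_{\tau'\setminus\tau}$ (and to force $\vol(Q_{\tau,\tau'})=0$). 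Separately, a small notational point: in your $\tau$-analog of Lemma~\ref{lemma-mult-convex} the multiplicity on the $A^{L,\tau}$-side should be $\mu^{F,\nothing}_{A^{L,\tau},q}$, not $\mu^{F,\tau}_{A^{L,\tau},q}$, since you are matching against the rank, which is the multiplicity along $T^*_XX$.
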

\begin{proof}
Recall the formula in Theorem \ref{thm:swFormula}. For any $\tau '\in \Phi_A^L$ containing $\tau$, since $\tau '$ is a pyramid over $\tau '\setminus \tau$, 
it follows that 
\[\ZZ \tau'\cap \QQ \tau =\ZZ \tau,
\quad 
\pi_{\tau, \tau '}(\ZZ \tau ' )=\ZZ (\tau ' \setminus \tau), 
\quad 
P_{\tau,\tau'}=\Delta_{\tau '\setminus \tau}, 
\]
$Q_{\tau,\tau'}$ is the convex hull of $\tau ' \setminus \tau$ (whose volume is zero because $\tau'$ is $F$-homogeneous), 
and $\vol_{\ZZ \tau '}(\tau ')=\vol_{\ZZ (\tau ' \setminus \tau )}(\tau ' \setminus \tau )$. Thus, for any face $G\preceq A$ that contains $\tau$, 
\[
\mu^{L,\tau}_G = \sum_{\tau\subseteq\tau'\in\Phi^{L,d-1}_G}
    [\ZZ G :\ZZ\tau'] \cdot \vol_{\ZZ \tau '}\left(\Delta_{\tau '}\right) 
    =  \vol_{\ZZ G}\Bigg(\bigcup_{\tau\subseteq\tau'\in\Phi^{L,d-1}_G}\Delta_{\tau '}\Bigg)
    =\vol_{\ZZ G}\left(\Delta_{G^{L,\tau}}\right).
\]    
When $(G,b)\in\mathcal{J}(\beta)$, to obtain the equality 
\begin{equation}\label{eqn:rank-volume-tau}
  \rank\left( \HH_{0}^{G^{L,\tau}}(P^\beta_{(G,b)},\beta)\right)=\vol_{\ZZ G}\left(\Delta_{G^{L,\tau}}\right) 
\end{equation} 
we can proceed as in the proof of Proposition \ref{prop:mu-versus-rank}, but now $\RR_{\geq 0} G$ is not equal to $\RR_{\geq 0} G^{L,\tau}$, 
so $S_A$ is only a direct sum of weakly toric $S_{A^{L, \tau}}$-modules (by Remark \ref{remark-ranking-lattices}) instead of a toric $S_{A^{L,\tau}}$-module.
On the other hand, in the proof of Theorem \ref{thm:upper-semicont-convex-case} we can use $P_{J}^{\beta}$ with $J=\{(G,b)\in \cJ (\beta )|\, \tau \subseteq G \}$ instead of $P^{\beta}$ and consider each 
$P_G^{\beta}$ as a direct sum of weakly toric Cohen--Macaulay $S_{G^{L,\tau}}$-modules.

Finally, by \cite[Remark 5.5.(5)]{ekdi}, in the analytic topology, $\HH_{0}^{A^{L,\tau}}(S_A,\beta)$ is locally a holonomic family on $\AA^d$. This fact along with 
    ~\cite[Theorem 2.6]{MMW} and Theorem \ref{thm:computeMults} imply that the function $\beta \mapsto \rank\left( 
    \HH_{0}^{A^{L,\tau}}(S_A ,\beta)\right)$ is upper-semicontinuous. 
\end{proof}

%%%%%%%%%%%%%%%%%%%%%%%%%%%%%%%%%%%%%%
\section{Gevrey series solutions associated to slopes}
%%%%%%%%%%%%%%%%%%%%%%%%%%%%%%%%%%%%%%
\label{sec:gevrey}

Let $\DD$ be the sheaf of linear partial differential operators with coefficients in the sheaf $\cO_X^\text{an}$ of holomorphic functions on $X = \CC^n$. 
The irregularity sheaf of order $s>1$ of a 
holonomic $\DD$-module $\MM$ along a hypersurface $Y$ was introduced and proved to be a perverse sheaf on $Y$ by Mebkhout~\cite{Mebkhout}. In particular, higher cohomology of the irregularity sheaf vanishes at generic points of $Y$.

In this section, for a coordinate hyperplane $Y\subset X$, 
we compute the dimension of the stalk at a generic point $p\in Y$ of the irregularity sheaf of order $s$ of $\MM_A (\beta )\defeq \DD \otimes_D M_A (\beta )$ along $Y$ for any parameter $\beta \in \CC^d$, 
generalizing results from~\cite{maria-irregular}. 
As a consequence, we provide some formulas for the dimension of the Gevrey solution spaces of $\MM_A (\beta)$ in particular cases, and we show that the dimension of the generic stalk of the irregularity sheaf of $\MM_A (\beta)$ along $Y$ is upper-semicontinuous 
in $\beta$. 

We assume for simplicity that $Y=\Var(x_n)$ and write $s$ instead of $L(s)$ for the filtration given by 
$L(s)\defeq F+(s-1)V_n$ 
with $s\geq 1$, where 
$F=(\boldzero_n,\boldone_n)$ is the filtration by the order of the differential operators and $V_n$ is the Kashiwara--Malgrange filtration along $Y$. Recall that this filtration is induced by the projective weight vector $V_n \defeq (0,\ldots, 0, -1,0,\ldots ,0,1)$, where $-1$ is the weight for the variable $x_n$. More precisely, the filtration $L(s)$ is determined by
\[ 
\deg_s \partial_i 
  = \left\{\begin{array}{ll}
    1 & \mbox{if $1\leq i\leq n-1$,}\\
    s & \mbox{if $i=n$,}
    \end{array}\right.
\qquad \text{and}\qquad 
\deg_s (x_i) = 1 - \deg_s (\partial_i). 
\]

In this section, we call the $(A,L(s))$-umbrella instead the $(A,s)$-umbrella, and we denote $\Phi_A^{s}\defeq \Phi_A^{L(s)}$ for $s\geq 1$.

A global version of Laurent's slope theory~\cite{Laurent} proceeds as follows. 
Let $M$ be a holonomic $D$-module. A number $s>1$ is said to be a
\emph{slope} of $M$ along $Y=\Var(x_n)$ if and only if the $s$-characteristic variety $\charVar^{s}(M)$ of $M$ along $Y$ is not
homogeneous with respect to the weight vector $F=(\boldzero_n,\boldone_n)$.

\begin{remark}\label{remark-slopes-umbrella}
Denote by $A'$ the submatrix of $A$ defined by the first $n-1$ columns and by $\Delta'$ the convex hull of the columns of $A'$ and the origin. 
Note that 
$a_n/s$ belongs to a hyperplane off the origin that contains a facet of $\Delta_{A'}$
if and only if
there exists a facet of the $(A,s)$-umbrella, in other words an element of $\Phi_A^{s,d-1}$, that is not $F$-homogeneous. Moreover, by \cite[Corollary 4.18]{slopes}, this condition holds if and only if 
$s>1$ is a slope of $M_A (\beta)$ along $\Var(x_n)$.
\end{remark}

Let $\cO_{\widehat{X|Y}}$ denote the formal completion of $\cO_X$ along $Y$. 
A \emph{germ} $f \in\cO_{\widehat{X|Y},p}$ with $p\in Y$ is a formal series
\[
f = \sum_{m=0}^{\infty} 
  f_{m}(x_1,\dots,x_{n-1})x_{n}^{m}
\] 
such that there exists some open subset $U\subseteq \CC^{n-1}$ so that $f_m$ is a holomorphic function in $U$ for all $m\geq 0$. 
The formal series $f\in\cO_{\widehat{X|Y},p}$ is said to be a \emph{Gevrey series} of order $s\in \RR$ along $Y$ at $p\in Y$ if the series 
\[
\rho_{s}^{\tau}(f)
  \defeq \sum_{m=0}^{\infty} 
  \frac{f_{m}(x_1,\dots,x_{n-1})}
  {(m!)^{s-1}} x_{n}^{m}
\] 
is convergent at $p$.
Moreover, if $\rho_{s'}^{\tau} (f)$ is not convergent at $p$ for any $s'<s$, then $s$ is said to be the \emph{Gevrey index} of $f$ along $Y$ at $p$. 
Denote by $\cO_{X|Y}(s)$ the subsheaf of $\cO_{\widehat{X|Y}}$ whose germs are Gevrey series of order $s$ along $Y$.

The \emph{irregularity sheaf} of a $\DD$-module
$\MM$ along $Y$ of order $s>1$ is
\[
\Irr_Y^{(s)}(\MM)
  \defeq \RR\hspace{-.35ex}\Hom_{\DD}
  (\MM,\cO_{\widehat{X|Y}}(s)/\cO_{X|Y}).
\]
For $s=\infty$, the sheaf $\Irr_Y^{\infty}(\MM)$ is simply called the \emph{irregularity sheaf} of $\MM$ along $Y$. If $M$ is a $D$-module, we define $\Irr_Y^{(s)}(M)\defeq\Irr_Y^{(s)}(\MM)$, where $\MM\defeq \mathcal{D}\otimes_D M$. 

Set $\ds(A,\beta) \defeq \dim H^{0}(\Irr_{Y}^{(s)}(M_A(\beta))_p)$ for a generic point $p\in Y=\Var(x_n)$. 
Applying Th\'eor\`eme 2.3.1 and (2.3.1) in \cite{LM} to this setting yields the equality 
\begin{equation}
\ds(A,\beta) 
 = \mu_{A,0}^{s+\epsilon,\nothing}(\beta) 
  -\mu_{A,0}^{1+\epsilon,\nothing}(\beta) 
  +\mu_{A,0}^{1+\epsilon,\{n\}}(\beta) 
  -\mu_{A ,0}^{s+\epsilon,\{n\}}(\beta)
\label{eqn:dimIrreg}
\end{equation} 
for $\epsilon >0$ small enough. In particular, if $\beta$ is not rank--jumping for $A$, then by Theorem~\ref{thm:swFormula} and  \cite[Theorem~7.5]{maria-irregular}, $\ds(A,\beta)$ is equal to 
\begin{equation}
\label{eqn:genericDimIrreg}
\ds (A)\defeq\mu_{A}^{s+\epsilon,\nothing}
  -\mu_{A}^{1+\epsilon,\nothing}
  +\mu_{A}^{1+\epsilon,\{n\}} 
  -\mu_{A }^{s+\epsilon,\{n\}}=
   \sum_{n\notin\tau \in
    \Phi_A^{s+\epsilon,d-1}\setminus\Phi_A^{1+\epsilon,d-1}} 
    \vol_{\ZZ^d}(A_{\tau}).
\end{equation}

\begin{remark}\label{remark-ds-faces}
Notice that \eqref{eqn:genericDimIrreg} also holds for any face $G\preceq A$ in place of $A$ when $a_n\in G$. Moreover,  
$\ds (G)=\dim H^{0}(\Irr_{Y}^{(s)}(M_G (\beta ' ))_p $ for a generic point $p\in Y'=\Var(x_n)\subseteq \CC^G$ 
and $\beta ' \in \CC G$ that is not rank--jumping for $G$. 
The genericity condition on $p$ requires that it avoids any other irreducible component of the singular locus of $M_G (\beta')$ (which is independent of $\beta'$ as a consequence of Theorem \ref{thm:sw char}). 
On the other hand, if $a_n\notin G$, then the coordinates indexed by $G$ of the projective weight vectors $L(s)$ and $F$ are the same. Hence the two induced filtrations over (any cyclic module over) the Weyl algebra in the variables indexed by $G$ are also the same. Thus, 
$\mu_{G}^{1+\epsilon,\tau}=\mu_{G}^{s+\epsilon ,\tau}$ for $\tau = \{n \}$ and $\tau =\nothing$ in this case, so $\ds (G)=0$. 
\end{remark}

\begin{prop}\label{lower-bound-ds}
For any $\beta\in \CC^d$, there is a lower bound $\ds (A,\beta)\geq \ds (A)$.
\end{prop}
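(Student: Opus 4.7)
My approach is to rewrite $\ds(A,\beta) - \ds(A)$ as a difference of dimensions of irregularity sheaves of Euler--Koszul homology modules, and then verify its non-negativity via the cellular resolution of $P^\beta$ from the proof of Theorem~\ref{thm:computeMults}.

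First, I would apply Euler--Koszul homology to the short exact sequence $0 \to S_A \to S_A[\del^{-1}] \to Q_A \to 0$. Since $S_A[\del^{-1}]$ is maximal Cohen--Macaulay, Theorem~\ref{thm:higherHHvanishing} gives $\HH_i(S_A[\del^{-1}],\beta) = 0$ for all $i \geq 1$, so the long exact sequence collapses to
\[
0 \to \HH_1(Q_A,\beta) \to M_A(\beta) \to \HH_0(S_A[\del^{-1}],\beta) \to \HH_0(Q_A,\beta) \to 0.
\]
At a generic point $p\in Y$, the irregularity functor $\Irr^{(s)}_Y(-)_p$ is concentrated in degree~$0$ by Mebkhout's perversity theorem, so taking $H^0$ of it is exact on short exact sequences of holonomic $D$-modules. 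Splitting the above four-term sequence and applying this yields $\ds(A,\beta) + \ds(\HH_0(Q_A,\beta)) = \ds(\HH_1(Q_A,\beta)) + \ds(\HH_0(S_A[\del^{-1}],\beta))$. By~\eqref{eqn:genericmuDef}, $\mu^{L,\tau}_{A,0}(S_A[\del^{-1}],\beta) = \mu^{L,\tau}_A$, and hence the Laumon--Mebkhout formula~\cite{LM} combined with~\eqref{eqn:genericDimIrreg} identifies $\ds(\HH_0(S_A[\del^{-1}],\beta)) = \ds(A)$. This reduces the inequality to
\[
\ds(\HH_1(Q_A,\beta)) \geq \ds(\HH_0(Q_A,\beta)).
\]

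Next, I would analyze this difference via the cellular resolution of $P^\beta$, using~\eqref{mu-P-Q} to pass from $Q_A$ to $P^\beta$ at the multiplicity level. For a simple ranking toric module $P^\beta_G$, Lemma~\ref{lem:simpleRT} shows that the alternating combination $\sum_{L,\tau}\pm\mu^{L,\tau}_{A,i}(P^\beta_G,\beta)$ equals $|B^\beta_G|\binom{\codim G}{i}D_G$, where
\[
D_G \defeq \mu^{s+\epsilon,\nothing}_G - \mu^{1+\epsilon,\nothing}_G + \mu^{1+\epsilon,\{n\}}_G - \mu^{s+\epsilon,\{n\}}_G.
\]
By Remark~\ref{remark-ds-faces}, $D_G = 0$ when $a_n\notin G$ and $D_G = \ds(G)\geq 0$ when $a_n\in G$. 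In the base case where $\max(\mathcal{J}(\beta)) = \{(G,b)\}$, so $P^\beta = P^\beta_G$, one obtains immediately
\[
\ds(A,\beta) - \ds(A) = |B^\beta_G|(\codim G - 1)\,\ds(G) \geq 0.
\]

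I would complete the proof by induction on $|\max(\mathcal{J}(\beta))|$, extracting iterated short exact sequences from the cellular complex~\eqref{eq:cellularComplex} and applying the long exact sequence of Euler--Koszul homology. The main obstacle is that $\ds(\HH_1) - \ds(\HH_0)$ is not obviously additive under short exact sequences of weakly toric modules, since only the full Euler characteristic $\sum_i(-1)^i\ds(\HH_i)$ is additive. I expect to resolve this by tracking contributions term-by-term in the spectral sequence of the cellular resolution, verifying, as illustrated by the two-maximal-face formula in Remark~\ref{ex:2cmpts}, that all signed combinations reassemble into non-negative multiples of the face-level quantities $D_G \geq 0$ identified above.
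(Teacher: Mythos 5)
Your opening reductions are sound: the short exact sequence $0\to S_A\to S_A[\del^{-1}]\to Q_A\to 0$, the four-term sequence from higher Euler--Koszul vanishing for $S_A[\del^{-1}]$, the exactness of $H^0(\Irr_Y^{(s)}(-))_p$ at generic $p\in Y$ (Mebkhout's perversity), and the identification $\ds(\HH_0(S_A[\del^{-1}],\beta))=\ds(A)$ via \eqref{eqn:genericmuDef} and \eqref{eqn:genericDimIrreg} are all correct, and they reduce the claim to $\ds(\HH_1(Q_A,\beta))\geq\ds(\HH_0(Q_A,\beta))$. Your base case (single maximal face) also checks out using Lemma~\ref{lem:simpleRT} and Remark~\ref{remark-ds-faces}.

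However, there is a genuine gap that you yourself flag. You propose to finish by induction on $|\max(\mathcal{J}(\beta))|$ via the cellular resolution \eqref{eq:cellularComplex}, but then concede that $\ds(\HH_1)-\ds(\HH_0)$ is not additive in short exact sequences of toric modules, and offer only the expectation that the signed contributions "reassemble into non-negative multiples." That is exactly the non-trivial content of the proposition, and it is not established; a priori the spectral sequence of the cellular complex can cancel contributions with either sign, so the non-negativity does not follow term-by-term. Without a concrete mechanism, the induction does not close.

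The paper avoids this difficulty by a different route. Instead of the cellular resolution of $P^\beta$ (which is the toolkit of~\cite{berkesch}), it transplants the argument of Schulze--Walther: it first establishes the single inequality $\operatorname{d}_s^{(0)}(N,\beta)\leq\operatorname{d}_s^{(1)}(N,\beta)$ for any toric module $N$ of dimension $<d$, by literally repeating the proof of~\cite[Lemma~4.29]{slopes} with $\dim H^0(\Irr_Y^{(s)}(-)_p)$ in place of $\charC^L(-)$-multiplicities (this substitution is permitted by the degree-$0$ concentration of the irregularity sheaf at generic points). That one inequality, fed into the proof of~\cite[Theorem~4.28]{slopes} --- which uses the toric filtration of $\widetilde{S}_A/S_A$ rather than a cellular resolution --- delivers the bound directly. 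Concretely: the SW argument never needs additivity of $\ds(\HH_1)-\ds(\HH_0)$; it only needs exactness of $H^0(\Irr_Y^{(s)}(-))_p$ (which you already have) together with the $\operatorname{d}_s^{(0)}\leq\operatorname{d}_s^{(1)}$ comparison on lower-dimensional toric subquotients. You should either prove that comparison (mimicking \cite[Lemma~4.29]{slopes}) and then follow \cite[Theorem~4.28]{slopes}, or else supply a genuine argument for the reassembly claim in your final paragraph; as written, the proposal stops short of a proof.
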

\begin{proof}
For a $\ZZ^d$-graded $\CC[\del]$-module $N$, define $\operatorname{d}_s^{(j)}(N,\beta) \defeq \dim H^{0}(\Irr_{Y}^{(s)}(\HH_j (N, \beta))_p) $ for a generic point $p\in Y=\Var(x_n)$.  Then by the same argument as in \eqref{eqn:dimIrreg}, 
\begin{equation}
\operatorname{d}_s^{(j)}(N,\beta) 
 = \mu_{A,j}^{s+\epsilon,\nothing}(N,\beta) 
  -\mu_{A,j}^{1+\epsilon,\nothing}(N,\beta) 
  +\mu_{A,j}^{1+\epsilon,\{n\}}(N,\beta) 
  -\mu_{A ,j}^{s+\epsilon,\{n\}}(N,\beta)
\label{eqn:dimIrregN}
\end{equation} 
for $\epsilon >0$ small enough. Notice that $\ds (A,\beta)=\operatorname{d}_s^{(0)}(S_A,\beta)$. By \cite[Corollary 4.13]{slopes} and \eqref{eqn:dimIrregN}, $\operatorname{d}_s^{(0)}(\widetilde{S}_A ,\beta)=\ds(A)$. Moreover, if $j\geq 1$, then $\operatorname{d}_s^{(j)}(\widetilde{S}_A ,\beta)=0$ because 
$\HH_j (\widetilde{S}_A, \beta)=0$ by Theorem \ref{thm:higherHHvanishing}. 

On the other hand, if $N$ is a toric module with dimension lower that $d$, it follows that 
\[
\operatorname{d}_s^{(0)}(N,\beta)\leq \operatorname{d}_s^{(1)}(N,\beta)
\] 
by the same argument as in the proof of \cite[Lemma 4.29]{slopes}, with the replacement, for each $D$-module $M$ that appears in that proof, of the role of $\charC^L(M)$ by $\dim H^{0}(\Irr_{Y}^{(s)}(M)_p) $ for a generic point $p\in Y=\Var(x_n)$. This is allowable because $H^{1}(\Irr_{Y}^{(s)}(M)_p)=0$ for generic points $p\in Y$ when $M$ is holonomic (see \cite{Mebkhout}). Thus, with the previous ingredients, the proof of \cite[Theorem 4.28]{slopes} gives the result with $\operatorname{d}_s^{(0)}$ in place of $\mu_{A,0}^{L,\tau}$.\end{proof}

\begin{cor}
\label{cor:dimStalkIrr}
For $s>1$, the dimension $\ds (A,\beta)$ of the stalk of 
$\Irr_Y^{(s)}(M_A (\beta ))$ at a generic
point $p$ of $Y$ can be computed from the combinatorics of
$\Phi_A^{s+\epsilon}\setminus \Phi_A^{1+\epsilon}$ for $\epsilon
>0$ small enough and the ranking lattices $\bbE_G^{\beta}$ at
$\beta$ such that $a_n \in G\preceq A$.
\end{cor}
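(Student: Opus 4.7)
The plan is to substitute the formula \eqref{eqn:dimIrreg},
\[
\ds(A,\beta) = \mu_{A,0}^{s+\epsilon,\nothing}(\beta) - \mu_{A,0}^{1+\epsilon,\nothing}(\beta) + \mu_{A,0}^{1+\epsilon,\{n\}}(\beta) - \mu_{A,0}^{s+\epsilon,\{n\}}(\beta),
\]
decompose each term as $\mu_{A,0}^{L,\tau}(\beta) = \mu_A^{L,\tau} + j_A^{L,\tau}(\beta)$, and then invoke Theorems~\ref{thm:swFormula} and~\ref{thm:computeMults} to isolate which combinatorial data survives the alternating sum.

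The generic part $\mu_A^{s+\epsilon,\nothing} - \mu_A^{1+\epsilon,\nothing} + \mu_A^{1+\epsilon,\{n\}} - \mu_A^{s+\epsilon,\{n\}}$ is exactly $\ds(A)$, which by \eqref{eqn:genericDimIrreg} already equals $\sum_{n\notin\tau\in\Phi_A^{s+\epsilon,d-1}\setminus\Phi_A^{1+\epsilon,d-1}} \vol_{\ZZ^d}(A_\tau)$; this depends only on the combinatorics of $\Phi_A^{s+\epsilon}\setminus\Phi_A^{1+\epsilon}$. It remains to analyze the jump part $J(\beta) \defeq j_A^{s+\epsilon,\nothing}(\beta) - j_A^{1+\epsilon,\nothing}(\beta) + j_A^{1+\epsilon,\{n\}}(\beta) - j_A^{s+\epsilon,\{n\}}(\beta)$. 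By Theorem~\ref{thm:computeMults}, each $j_A^{L,\tau}(\beta)$ is computed from the spectral sequence applied to the cellular complex \eqref{eq:cellularComplex} for $P_J^\beta$ with $J = \cJ(\beta)$; Lemma~\ref{lem:simpleRT} identifies each $E_1$ entry as an integer multiple of $|B_G^\beta|\cdot\mu_G^{L,\tau}$ for faces $G \supseteq \tau$ with $\bbE_G^\beta \neq \nothing$, and the combinatorial coefficients there depend only on $\cJ(\beta)$, $\tau$, and the face poset of $A$, not on $L$. Thus the $L$-dependence of $j_A^{L,\tau}(\beta)$ is carried entirely by the generic face-multiplicities $\mu_G^{L,\tau}$, and the cellular structure separates the contributions additively by the face $G$.

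For $\tau = \{n\}$ only faces $G$ with $a_n \in G$ participate, as required. For $\tau = \nothing$ and any face $G$ with $a_n \notin G$, the $\del$-weights of $L(s+\epsilon)$ and $L(1+\epsilon)$ restricted to indices in $G$ both equal the corresponding $F$-weights, so the $(G,\cdot)$-umbrellas coincide and Theorem~\ref{thm:swFormula} gives $\mu_G^{s+\epsilon,\nothing}=\mu_G^{1+\epsilon,\nothing}$; such faces therefore contribute zero to $J(\beta)$. What survives in $J(\beta)$ is then a function of $|B_G^\beta|$ (encoded in $\bbE_G^\beta$) and of $\mu_G^{L,\tau}$ (encoded, via Theorem~\ref{thm:swFormula}, in $\Phi_G^{s+\epsilon}\setminus\Phi_G^{1+\epsilon}$) for faces $G$ with $a_n \in G$, yielding the claim.

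The main obstacle will be making the linearity step of the previous paragraph rigorous: $j_A^{L,\tau}(\beta)$ is the length of a localized graded module rather than an Euler characteristic, so one must argue that its dependence on the collection $\{\mu_G^{L,\tau}\}_G$ is genuinely linear with $L$-independent coefficients. I would handle this by exploiting the exactness of $\gr^L(-)$ and of localization at $P_\tau$, combined with the vanishing of higher Euler--Koszul homology for the Cohen--Macaulay summands $\widetilde{S}_G$ appearing in the cellular resolution (Theorem~\ref{thm:higherHHvanishing}); this causes the spectral sequence to degenerate sufficiently that the length decomposes additively over the $G$-indexed pieces of \eqref{eq:cellularComplex}, as is already exploited in the special cases treated by Corollaries~\ref{cor:not-in-any-codim2} and~\ref{cor:uniqueCodim2} and Remark~\ref{ex:2cmpts}.
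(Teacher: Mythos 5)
Your proposal is correct and follows essentially the same route as the paper: combine \eqref{eqn:dimIrreg} with Theorem~\ref{thm:computeMults}, and then observe that faces $G$ with $a_n\notin G$ drop out because the filtrations $L(s+\epsilon)$ and $L(1+\epsilon)$ restrict identically to the $G$-variables — this is precisely the content of Remark~\ref{remark-ds-faces}, which the paper cites for that step. The ``linearity'' obstacle you flag at the end is real but is exactly what the paper delegates to the additive separation in the cellular/spectral-sequence machinery of Theorem~\ref{thm:computeMults}, so your treatment is, if anything, slightly more explicit than the paper's.
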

\begin{proof}
It follows from \eqref{eqn:dimIrreg} and Theorem~\ref{thm:computeMults} that $\ds (A,\beta)$ can be computed from the combinatorics of the $(A,s')$-umbrellas for $s'\in\{1+\epsilon, s + \epsilon\}$ and the ranking lattices $\EE^{\beta}$. Thus, by Remark \ref{remark-ds-faces}, it is enough to consider the ranking lattices $\EE_G^{\beta}$ at $\beta$ corresponding to the faces $G\preceq A$ containing $a_n$.
\end{proof}

We now state further consequences for $\ds (A,\beta)$. 

\begin{cor}
\label{cor:simpleComputeGevrey}
If $P^{\beta}=P^{\beta}_G$ for some $G\preceq A$, then
\[
\ds (A, \beta ) =\ds (A) + |B^\beta_{G}|\cdot (\codim(G)-1) \cdot \ds (G).
\] In particular, if $a_n\notin G$ or $\codim(G)=1$, then $\ds (A,\beta)=\ds(A)$.
\end{cor}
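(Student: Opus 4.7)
The plan is to reduce this to direct bookkeeping with the formulas already established. Starting from \eqref{eqn:dimIrreg}, I would write
\[
\ds(A,\beta) - \ds(A) = \sum_{\tau \in \{\nothing,\{n\}\}} \pm \bigl(\mu_{A,0}^{L,\tau}(\beta) - \mu_A^{L,\tau}\bigr)
\]
with the signs dictated by \eqref{eqn:dimIrreg} and \eqref{eqn:genericDimIrreg}, where $L$ ranges over $s+\epsilon$ and $1+\epsilon$. Thus the task is to evaluate each jump $\mu_{A,0}^{L,\tau}(\beta) - \mu_A^{L,\tau}$ under the hypothesis $P^\beta = P_G^\beta$.

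For each such jump, I would invoke \eqref{mu-Q} and \eqref{mu-P-Q} from the proof of Theorem~\ref{thm:computeMults} to rewrite
\[
\mu_{A,0}^{L,\tau}(\beta) - \mu_A^{L,\tau} = \mu_{A,1}^{L,\tau}(P^\beta,\beta) - \mu_{A,0}^{L,\tau}(P^\beta,\beta),
\]
and then use the hypothesis $P^\beta = P_G^\beta$ together with Lemma~\ref{lem:simpleRT} to obtain
\[
\mu_{A,1}^{L,\tau}(P_G^\beta,\beta) - \mu_{A,0}^{L,\tau}(P_G^\beta,\beta) = |B_G^\beta|\cdot(\codim(G)-1)\cdot \mu_G^{L,\tau}
\]
when $\tau \subseteq G$, and $0$ otherwise; here I am using $\binom{\codim(G)}{1} - \binom{\codim(G)}{0} = \codim(G) - 1$. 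The condition $\tau \subseteq G$ is automatic when $\tau = \nothing$, while for $\tau = \{n\}$ it amounts to $a_n \in G$.

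If $a_n \in G$, all four terms survive and factor out $|B_G^\beta|(\codim(G)-1)$, leaving the alternating sum
\[
\mu_G^{s+\epsilon,\nothing} - \mu_G^{1+\epsilon,\nothing} + \mu_G^{1+\epsilon,\{n\}} - \mu_G^{s+\epsilon,\{n\}} = \ds(G),
\]
by \eqref{eqn:genericDimIrreg} applied to $G$ in place of $A$ (valid by Remark~\ref{remark-ds-faces}). If instead $a_n \notin G$, only the $\tau = \nothing$ terms contribute, and Remark~\ref{remark-ds-faces} gives $\mu_G^{s+\epsilon,\nothing} = \mu_G^{1+\epsilon,\nothing}$ (since the projections of $L(s)$ and $F$ to the $G$-coordinates agree), so the surviving pair cancels and both the main formula and the ``in particular'' clause hold with $\ds(G) = 0$. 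The $\codim(G) = 1$ case is immediate from the factor $(\codim(G)-1)$.

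There is no serious obstacle here: all the machinery is already in place, and the only thing to be careful about is bookkeeping the four $(L,\tau)$ pairs and correctly identifying when $\tau \subseteq G$ fails so that Lemma~\ref{lem:simpleRT} contributes zero, versus when the contribution assembles into $\ds(G)$ via \eqref{eqn:genericDimIrreg}.
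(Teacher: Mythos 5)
Your proposal is correct and follows exactly the paper's route: the paper's proof is literally a one-line citation of \eqref{mu-Q}, \eqref{mu-P-Q}, Lemma~\ref{lem:simpleRT}, \eqref{eqn:dimIrreg}, \eqref{eqn:genericDimIrreg}, and Remark~\ref{remark-ds-faces}, which are precisely the ingredients you assemble. You have simply written out the bookkeeping (the four $(L,\tau)$ pairs, the $\tau\subseteq G$ dichotomy, and the cancellation when $a_n\notin G$) that the paper leaves implicit.
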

\begin{proof}
It is a direct consequence of  \eqref{mu-Q}, \eqref{mu-P-Q}, Lemma \ref{lem:simpleRT}, \eqref{eqn:dimIrreg}, \eqref{eqn:genericDimIrreg}, and Remark \ref{remark-ds-faces}.
\end{proof}

\begin{cor}\label{cor:dim2Gevrey}
If $d=2$, then $\ds (A ,\beta)=\ds (A)$ for any $\beta\in \CC^d$.
\end{cor}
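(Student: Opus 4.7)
The plan is to use~\eqref{eqn:dimIrreg} and~\eqref{eqn:genericDimIrreg} to write
\[
\ds(A,\beta) - \ds(A) = \bigl[j_A^{s+\epsilon,\nothing}(\beta) - j_A^{1+\epsilon,\nothing}(\beta)\bigr] + \bigl[j_A^{1+\epsilon,\{n\}}(\beta) - j_A^{s+\epsilon,\{n\}}(\beta)\bigr],
\]
and then to show, when $d=2$, that every $j_A^{L,\{n\}}(\beta)$ vanishes and that $j_A^{L,\nothing}(\beta)$ does not depend on the projective weight $L$. Together, these two facts give $\ds(A,\beta) = \ds(A)$.

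The $\tau=\{n\}$ bracket is immediate: since $\dim(\CC\{n\}) = 1 = d-1$ when $d=2$, Remark~\ref{cor:codim1mu} gives that $\mu_{A,0}^{L,\{n\}}(\beta)$ is independent of $\beta$ for every projective $L$, so $j_A^{L,\{n\}}(\beta) = 0$.

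For the $\tau=\nothing$ bracket, observe that for $d=2$ the only proper faces of $A$ that can appear in $\JJ(\beta)$ are the two extremal rays (codimension~$1$) or $\nothing$ (codimension~$2$), since $\EE_A^\beta = \nothing$. Because the Euler--Koszul complex has length $d = 2$, one has $\mu_{A,q}^{L,\nothing}(P^\beta,\beta) = 0$ for $q > 2$, and combining this with~\eqref{mu-Q},~\eqref{mu-P-Q}, and~\eqref{eqn:genericmuDef} yields
\[
j_A^{L,\nothing}(\beta) = \mu_{A,2}^{L,\nothing}(P^\beta,\beta).
\]
By Lemma~\ref{lem:simpleRT}, each simple summand $P_G^\beta$ contributes $|B_G^\beta|\binom{\codim G}{2}\mu_G^{L,\nothing}$ to $\mu_{A,2}^{L,\nothing}(P^\beta,\beta)$: this is $0$ for any ray $G$ (since $\binom{1}{2} = 0$), while for $G = \nothing$ it equals the $L$-independent quantity $|B_\nothing^\beta|$, using that $\mu_\nothing^{L,\nothing} = 1$ (a convention consistent with Example~\ref{notsimplicial}). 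Hence every entry in row $q=2$ of the $E_1$-page of the spectral sequence of Theorem~\ref{thm:computeMults} is $L$-independent; since higher rows $q>2$ vanish entirely and the cellular differentials of~\eqref{eq:cellularComplex} depend only on the combinatorics of $\JJ(\beta)$, the abutment $\mu_{A,2}^{L,\nothing}(P^\beta,\beta)$ is $L$-independent.

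The main obstacle is making the spectral sequence argument fully rigorous, in particular verifying that no later-page differential reintroduces $L$-dependence from the $L$-dependent ray contributions in the lower rows into $\mu_{A,2}^{L,\nothing}(P^\beta,\beta)$. As a sanity check, the proposed cancellation can be verified explicitly using Corollary~\ref{cor:simpleComputeGevrey} for single-face $\max(\JJ(\beta))$ and Remark~\ref{ex:2cmpts} for two-face configurations (two rays, or a ray with $\nothing$), where a direct computation of $C^\beta$ exhibits the vanishing of the $L$-dependent ray multiplicities and reduces everything to the constant $\mu_\nothing^{L,\nothing} = 1$.
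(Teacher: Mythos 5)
Your proof takes a genuinely different route. The paper's argument is a two-step deduction from Corollaries~\ref{cor:dimStalkIrr} and~\ref{cor:simpleComputeGevrey}: by Corollary~\ref{cor:dimStalkIrr}, $\ds(A,\beta)$ depends only on the ranking lattices $\EE_G^\beta$ for faces $G$ containing $a_n$; for $d=2$ the only such proper face is a single ray $G_1$ of codimension~$1$ (since $a_n$ lies in at most one ray and certainly not in $\nothing$), so one is in the simple-face situation, and Corollary~\ref{cor:simpleComputeGevrey} gives $\ds(A,\beta) = \ds(A) + |B_{G_1}^\beta|(\codim G_1 - 1)\ds(G_1) = \ds(A)$, with the factor $\codim G_1 - 1 = 0$ doing all the work. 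You instead decouple the $\tau=\nothing$ and $\tau=\{n\}$ contributions and attempt to prove $L$-independence of each jump separately; the $\tau=\{n\}$ piece via Remark~\ref{cor:codim1mu} is clean and correct.

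The gap is on the $\tau=\nothing$ side, as you yourself flag. The Euler-characteristic reduction $j_A^{L,\nothing}(\beta) = \mu_{A,2}^{L,\nothing}(P^\beta,\beta)$ is fine, but $L$-independence of this multiplicity is not established. Knowing that the localized, $\gr^L$-graded entries in row $q=2$ of the $E_1$-page have $L$-independent lengths at $P_{\nothing}$ does not determine the length of $\HH_2(P_J,\beta)$, which (since $\HH_q=0$ for $q>2$) equals $\ker\bigl(d_1\colon E_1^{0,2}\to E_1^{1,2}\bigr)$: the rank of the localized map $d_1$ could a priori depend on $L$, so the kernel's length is not pinned down by the lengths of the terms alone. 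Your sanity checks via Corollary~\ref{cor:simpleComputeGevrey} and Remark~\ref{ex:2cmpts} cover only $\max(\JJ(\beta))$ involving one or two faces; for $d=2$ it can involve three simultaneously — both rays and $\nothing$ — when $\beta\in\ZZ^2\setminus\NN A$ lies on no translate $b+\ZZ G_i$ occurring in $\JJ(\beta)$, and you give no argument for that configuration. The paper's use of Corollary~\ref{cor:dimStalkIrr} is designed precisely to avoid this issue: since $\ds(G)=0$ whenever $a_n\notin G$ (Remark~\ref{remark-ds-faces}), the contribution of $\nothing$ and of the ray not containing $a_n$ never needs to be analyzed for $L$-independence at all.
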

\begin{proof}
Since $d=2$, the matrix $A$ has only two proper faces $G_1, G_2 \preceq A$, which both have codimension $1$. Moreover, $a_n$ belongs to at most one of these two facets. Thus, by Corollaries \ref{cor:dimStalkIrr} and \ref{cor:simpleComputeGevrey}, it is enough to consider the case when $a_n\in G_1$ and $\max(\mathcal{J}(\beta))$ involves $G_1$. In this case, $\ds (A,\beta)$ can be computed as in the simple case, so the formula in Corollary \ref{cor:simpleComputeGevrey} can be applied, giving $\ds (A ,\beta)=\ds (A)$ since $\codim (G_1 )=1$.
\end{proof}

Notice that Corollary \ref{cor:dim2Gevrey} also follows from \cite[Proposition 4.25]{slopes} and  \eqref{eqn:dimIrreg}.

\begin{cor}\label{cor:d=3-Gevrey}
If $d=3$, then $\ds (A, \beta ) >\ds (A)$ if and only if $\operatorname{max}(\mathcal{J}(\beta))$ involves a face $G$ with $a_n\in G$ and $\dim G=1$. If this is the case, $\ds(A,\beta)=\ds (A) + |B^\beta_{G}| \cdot \ds (G)$.
\end{cor}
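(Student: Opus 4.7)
The plan is to analyze $\ds(A,\beta) - \ds(A)$ through the multiplicity-jump decomposition arising from \eqref{eqn:dimIrreg} and \eqref{eqn:genericDimIrreg},
\[
\ds(A,\beta) - \ds(A) = j_A^{s+\epsilon,\nothing}(\beta) - j_A^{1+\epsilon,\nothing}(\beta) + j_A^{1+\epsilon,\{n\}}(\beta) - j_A^{s+\epsilon,\{n\}}(\beta),
\]
computing each term via Theorem~\ref{thm:computeMults}. By Corollary~\ref{cor:dimStalkIrr}, only ranking lattices $\bbE_G^\beta$ with $a_n \in G$ can contribute to $\ds(A,\beta) - \ds(A)$. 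Since $d=3$, the proper faces of $A$ containing $a_n$ have either dimension $2$ (facets containing $a_n$) or dimension $1$ (the unique ray $G$ through $a_n$, provided $a_n$ lies on a $1$-dimensional face).

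First, I would handle $\tau = \{n\}$: the only codimension-$2$ face of $A$ containing $\{n\}$ is $G$, so Corollary~\ref{cor:uniqueCodim2} gives
\[
j_A^{L,\{n\}}(\beta) = \begin{cases} |B_G^\beta|\,\mu_G^{L,\{n\}} & \text{if } (G,b) \in \max(\mathcal{J}(\beta)) \text{ for some } b, \\ 0 & \text{otherwise.} \end{cases}
\]
Next I would analyze $\tau = \nothing$ through the cellular resolution of $P^\beta$ supplied by Theorem~\ref{thm:computeMults}. By Lemma~\ref{lem:simpleRT}, a simple summand $P_H^\beta$ contributes $|B_H^\beta|\binom{\codim(H)}{q}\mu_H^{L,\nothing}$ to $\mu_{A,q}^{L,\nothing}(P^\beta,\beta)$; hence facet summands ($\codim = 1$) contribute identically in $q = 0,1$ and cancel in $j_A^{L,\nothing} = \mu_1 - \mu_0$, while the ray $G$ ($\codim = 2$) contributes exactly $|B_G^\beta|\,\mu_G^{L,\nothing}$. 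Faces $H$ with $a_n \notin H$ can contribute individually to each $j_A^{L,\nothing}(\beta)$, but by Remark~\ref{remark-ds-faces} the combination $\mu_H^{s+\epsilon,\nothing} - \mu_H^{1+\epsilon,\nothing} + \mu_H^{1+\epsilon,\{n\}} - \mu_H^{s+\epsilon,\{n\}}$ is equal to $\ds(H) = 0$, so such $H$ (including the empty face) contribute nothing to $\ds(A,\beta)-\ds(A)$.

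Combining the two analyses, when $(G,b) \in \max(\mathcal{J}(\beta))$ for some $b$,
\[
\ds(A,\beta) - \ds(A) = |B_G^\beta|\bigl[\mu_G^{s+\epsilon,\nothing} - \mu_G^{1+\epsilon,\nothing} + \mu_G^{1+\epsilon,\{n\}} - \mu_G^{s+\epsilon,\{n\}}\bigr] = |B_G^\beta|\,\ds(G),
\]
where the last equality is the face version of \eqref{eqn:genericDimIrreg} recorded in Remark~\ref{remark-ds-faces}. Otherwise, no maximal pair of $\mathcal{J}(\beta)$ involves a $1$-dimensional face through $a_n$, every surviving contribution vanishes, and $\ds(A,\beta) = \ds(A)$, yielding both directions of the equivalence.

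The principal technical obstacle is verifying the asserted cancellations when $\max(\mathcal{J}(\beta))$ contains several faces whose intersection lattice produces $G$ as a non-maximal intersection, most notably the case of two distinct facets sharing the ray $G$. There I would invoke the alternating-sum structure of the cellular complex from Theorem~\ref{thm:computeMults}, cross-checking the two-face situation directly against Remark~\ref{ex:2cmpts}---in which the constant $C^\beta$ evaluates to $0$ for a pair of facets meeting in a codimension-$2$ face, so such intersections produce no contribution beyond the simple-case tallies above---and extending the same binomial cancellations inductively to arbitrary collections of facets through $G$.
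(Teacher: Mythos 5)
Your strategy closely tracks the paper's: both start from Corollary~\ref{cor:dimStalkIrr} to restrict attention to faces containing $a_n$, both observe that in $d=3$ those faces are at most two facets and the ray $G$ through $a_n$, and both land on Lemma~\ref{lem:simpleRT} and Remark~\ref{ex:2cmpts} (with $C^\beta=0$) as the ingredients that make the ray contribute and the facets cancel. The real divergence is in how the multi-face bookkeeping is dispatched. The paper does not attempt a direct term-by-term analysis of the cellular complex; instead it invokes the reduction recorded in \cite[Section 5.3]{berkesch} to assert up front that it suffices to treat exactly two configurations: (i) $a_n$ belongs to a unique face of $\max(\mathcal{J}(\beta))$, handled by quoting Corollary~\ref{cor:simpleComputeGevrey}, and (ii) $a_n$ belongs to exactly two faces of $\max(\mathcal{J}(\beta))$, necessarily facets meeting in $G$, handled by Remark~\ref{ex:2cmpts} showing the jump vanishes for every $L$ and $\tau$. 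You instead try to push a $\tau$-by-$\tau$ analysis ($\tau=\{n\}$ via Corollary~\ref{cor:uniqueCodim2}, $\tau=\nothing$ via the cellular resolution and Lemma~\ref{lem:simpleRT}) and to argue directly that facet contributions cancel in the $q=0,1$ difference while the ray contributes; you then flag ``the principal technical obstacle'' of organizing the alternating-sum cancellations over intersections, planning to check the two-facet case against Remark~\ref{ex:2cmpts} and extend inductively. That obstacle is precisely what the citation to \cite[Section 5.3]{berkesch} is doing for the paper, so the two proofs end up in the same place; yours carries the reduction step as an explicit (and not yet completed) computation, while the paper outsources it. Your phrasing of the $\tau=\nothing$ facet-cancellation as if each simple ranking toric module in the cellular complex contributed additively to $\mu_{A,q}^{L,\nothing}(P^\beta,\beta)$ is also a bit loose---the spectral sequence only guarantees the alternating sums agree, not the individual homologies---so the appeal to Remark~\ref{ex:2cmpts} is doing more work there than the narrative suggests. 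With that caveat recorded and the reduction either cited or worked through, the proposal is sound and reaches the paper's conclusion.
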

\begin{proof}
Again by Corollary \ref{cor:dimStalkIrr}, we only need to consider the ranking lattices $\EE_G^{\beta}$ such that $a_n\in G$. Thus, by the reduction given in  \cite[Section 5.3]{berkesch}, it is enough to prove the result in the following two cases. 

The first case is that $a_n$ belongs to a unique face $G$ among those involved in $\max(\mathcal{J}(\beta))$. In this case, the computation follows as in the simple case, and we obtain the same formula as in Corollary~\ref{cor:simpleComputeGevrey}. 

In the second case, we may assume that there are exactly two faces $G_1$ and $G_2$ involved in $\max(\mathcal{J}(\beta))$ that contain $a_n$. Since the face $G_1\cap G_2$ contains $a_n$ and $d=3$, it follows that $G_1$ and $G_2$ are two facets intersecting in a face of codimension $2$. In this case, Remark~\ref{ex:2cmpts} shows that $\mu_{A,0}^{L,\tau}(\beta)=\mu_A^{L,\tau}$ for any filtration $L$ and any $\tau\in \Phi_A^L$, so $\ds (A,\beta)=\ds (A)$.
\end{proof}

\begin{lemma}\label{upper-semicont-Gevrey-lemma}
Let $s> 1$ be such that the $(A,s)$-umbrella $\Phi_A^s$ has a unique facet $\tau$ that is not $F$-homogeneous, $p$ is a generic point of $Y=\Var(x_n)$, and $\epsilon>0$ small enough.  
Then the function 
\[
\beta \mapsto d (A,\beta,s)\defeq\dim \mathcal{H}om_{\mathcal{D}}(\mathcal{M}_A (\beta),\cO_{\widehat{X|Y}}(s+\epsilon)/\cO_{\widehat{X|Y}}(s -\epsilon))_p
\] 
is upper-semicontinuous.
\end{lemma}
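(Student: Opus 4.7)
The goal is to identify $d(A,\beta,s)$ with the rank of a single $D$-module belonging to a holonomic family over $\AA^d$, whereupon upper-semicontinuity follows from \cite[Theorem 2.6]{MMW}. Fix $\epsilon,\delta>0$ small enough that $s$ is the unique slope of $M_A(\beta)$ in $(s-\epsilon,s+\epsilon)$, and set $L^\pm \defeq L(s\pm\epsilon+\delta)$, so that $\Phi_A^{L^+}$ and $\Phi_A^{L^-}$ realize the $(A,L)$-umbrellas just above and just below $s$, respectively.

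Applying $R\!\Hom_\DD(\MM_A(\beta),-)$ to the short exact sequence
\[
0 \to \cO_{X|Y}(s-\epsilon)/\cO_{X|Y} \to \cO_{X|Y}(s+\epsilon)/\cO_{X|Y} \to \cO_{X|Y}(s+\epsilon)/\cO_{X|Y}(s-\epsilon) \to 0
\]
and passing to stalks at a generic $p\in Y$, the vanishing $H^1(\Irr_Y^{(s-\epsilon)}(\MM_A(\beta)))_p=0$ from Mebkhout's perversity theorem \cite{Mebkhout} collapses the associated long exact sequence to $d(A,\beta,s)=d_{s+\epsilon}(A,\beta)-d_{s-\epsilon}(A,\beta)$. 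Expanding via \eqref{eqn:dimIrreg}, the common terms indexed by $L(1+\delta)$ cancel, leaving
\[
d(A,\beta,s)=\bigl[\mu^{L^+,\nothing}_{A,0}(\beta)-\mu^{L^-,\nothing}_{A,0}(\beta)\bigr]-\bigl[\mu^{L^+,\{n\}}_{A,0}(\beta)-\mu^{L^-,\{n\}}_{A,0}(\beta)\bigr].
\]

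Under the hypothesis that $\tau\in\Phi_A^s$ is the unique non-$F$-homogeneous facet, we have $a_n\in\tau$ by Remark \ref{remark-slopes-umbrella}, and the umbrellas $\Phi_A^{L^+}$ and $\Phi_A^{L^-}$ differ only through competing $F$-homogeneous subdivisions of the region under $\tau$. Applying the cellular resolution of $P^\beta_J$ from the proof of Theorem \ref{thm:computeMults} together with Lemma \ref{lem:simpleRT} to each of the four multiplicities, the contributions of simple ranking toric modules $P^\beta_G$ indexed by $G\not\preceq\tau$ occur identically in the $L^+$ and $L^-$ computations and cancel in the signed combination. Packaging the remaining contributions---supported on faces $G\preceq\tau$---by the strategy from the proofs of Theorem \ref{thm:upper-semicont-convex-case} and Corollary \ref{cor: upper-semicont-tau-convex-case} (now with $\tau$, viewed as a rank-$d$ submatrix of $A$, playing the role of $A^L$) yields
\[
d(A,\beta,s)=\rank\bigl(\HH^{\tau}_0(S_A,\beta)\bigr).
\]
Since $S_A$ is a direct sum of weakly toric $S_\tau$-modules (Remark \ref{remark-ranking-lattices}), \cite[Remark 5.5]{ekdi} ensures that $\HH^{\tau}_0(S_A,\beta)$ is locally a holonomic family on $\AA^d$, and \cite[Theorem 2.6]{MMW} then delivers upper-semicontinuity of its rank.

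The main obstacle is the middle identification: showing that the signed sum of four multiplicities at $L^\pm$ collapses to the single rank $\rank(\HH^\tau_0(S_A,\beta))$. The heuristic reason is that the non-$F$-homogeneous facet $\tau$ supports a pyramid-type cancellation akin to the one underlying Corollary \ref{cor: upper-semicont-tau-convex-case} (with $a_n\in\tau$ providing the apex), but rigorously verifying the cancellation term-by-term through the ranking-lattice and cellular-complex machinery of Section \ref{sec:mults}---in particular tracking contributions from faces $G\preceq\tau$ versus $G\not\preceq\tau$ to both the $\nothing$- and $\{n\}$-multiplicities at $L^+$ and $L^-$---is the principal technical challenge of the lemma.
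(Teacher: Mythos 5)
Your overall strategy coincides with the paper's: reduce $d(A,\beta,s)$ to the difference $d_{s+\epsilon}(A,\beta)-d_{s-\epsilon}(A,\beta)$, expand it as a signed sum of four multiplicities via \eqref{eqn:dimIrreg}, identify that sum with the rank of a single Euler--Koszul homology module taken with respect to a rank-$d$ submatrix of $A$, and conclude by holonomic-family upper-semicontinuity from \cite{MMW}. But the key identification is stated with the wrong submatrix, and it is also precisely the step you defer rather than prove. The module must be $\HH^{\tau'}_0(S_A,\beta)$ for $\tau'\defeq\tau\setminus\{n\}$, not $\HH^{\tau}_0(S_A,\beta)$. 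By \eqref{eqn:genericDimIrreg}, the generic value of $d(A,\beta,s)$ equals $\vol_{\ZZ A}(\Delta_{\tau'})$, since $\tau'$ is the unique facet of $\Phi_A^{s+\epsilon}$ that does not contain $a_n$ and is not already a facet of $\Phi_A^{s-\epsilon}$; on the other hand, the generic rank of $\HH^{\tau}_0(S_A,\beta)$ is $\vol_{\ZZ A}(\Delta_{\tau})$. Because $\tau$ fails to be $F$-homogeneous exactly at $s$, the point $a_n/s$ lies on the affine hyperplane off the origin spanned by $\tau'$, so $a_n$ lies strictly beyond that hyperplane and $\vol_{\ZZ A}(\Delta_{\tau})>\vol_{\ZZ A}(\Delta_{\tau'})$. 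Your identity therefore already fails for generic $\beta$: in the paper's closing example with $A_1=(1\;2)$ and $s=2$, it would give $2$ where the correct value is $1$.

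Beyond that slip, the passage from the four-term multiplicity formula to the rank of a single module is the substance of the lemma, and you explicitly leave it as ``the principal technical challenge.'' The paper carries it out by first computing the generic value $d(A,s)=\vol_{\ZZ A}(\Delta_{\tau'})=\rank(\HH^{\tau'}_0(\widetilde{S}_A,\beta))$ using \eqref{eqn:F-homogeneous-mult} and \cite[Lemma 7.4]{maria-irregular}, establishing the analogous identity $d(G,s)=\rank(\HH^{\tau'\cap G}_0(\widetilde{S}_G,\beta))$ for the faces $G\preceq A$ with $a_n\in G$ and $\tau'\cap G$ a facet of $\Phi_G^{s+\epsilon}$ (these, not the faces ``$G\preceq\tau$,'' are the ones that contribute, by Remark \ref{remark-ds-faces}), and then running the ranking-lattice argument of Corollary \ref{cor: upper-semicont-tau-convex-case}. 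As written, your argument asserts the conclusion of that computation---with the wrong matrix---rather than giving it, so the proof is incomplete even after the substitution $\tau\mapsto\tau'$ is made.
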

\begin{proof}
Notice first that by the assumption and Remark \ref{remark-slopes-umbrella}, $s$ is a slope of $M_A (\beta)$ along $Y$ and $a_n \in \tau$. Indeed, the assumption implies that $\tau'\defeq\tau \setminus \{n\}$ is the unique facet of $\Phi_A^{s+\epsilon}$ that does not contain $a_n$ and is also not a facet of $\Phi_A^{s-\epsilon}$. 
On the other hand, 
\begin{align*}
d(A,\beta,s)&=d_{s+\epsilon} (A,\beta)-d_{s-\epsilon}(A,\beta)\\
&=\mu_{A,0}^{s+\epsilon,\nothing}(\beta) 
  -\mu_{A,0}^{s-\epsilon,\nothing}(\beta) 
  +\mu_{A,0}^{s-\epsilon,\{n\}}(\beta) 
  -\mu_{A ,0}^{s+\epsilon,\{n\}}(\beta).
\end{align*}
Thus, setting $d(A,s)\defeq\mu_{A,0}^{s+\epsilon,\nothing} 
  -\mu_{A,0}^{s-\epsilon,\nothing} 
  +\mu_{A,0}^{s-\epsilon,\{n\}} 
  -\mu_{A ,0}^{s+\epsilon,\{n\}}$ yields
\begin{align*}
d(A,s)=\vol_{\ZZ A}(\Delta_{\tau'})=\operatorname{rank}(\HH^{\tau ' }_0 ( \widetilde{S}_A ,\beta)), 
\end{align*}
where the first equality follows by the assumption, \eqref{eqn:F-homogeneous-mult}, and \cite[Lemma 7.4]{maria-irregular}. The second equality follows as in the proof of \eqref{eqn:rank-volume-tau}, since $A_{\tau'}$ is a rank $d$ submatrix of $A$. Similarly, for faces $G$ of $A$ such that $a_n \in G$ and $\tau'':=\tau'\cap G$ is a facet of $\Phi_G^{s+\epsilon}$, we also have that $d(G,s)=\operatorname{rank}(\HH^{\tau''}_0 ( \widetilde{S}_G ,\beta))$. Thus, arguments similar to those in Corollary \ref{cor: upper-semicont-tau-convex-case} show that $d(A,\beta,s)=\operatorname{rank}(\HH^{\tau'}_0 (S_A ,\beta))$
and that the function $\beta \mapsto d (A,\beta,s)=\operatorname{rank}(\HH^{\tau'}_0 (S_A ,\beta))$ is upper-semi\-continuous in $\beta$.
\end{proof}

\begin{theorem}
Assume that for all $s>1$, $a_n/s$ is in at most one of the hyperplanes off the origin supported in a facet of $\Delta'$ (see Remark \ref{remark-slopes-umbrella}). 
Then the function $\beta \mapsto \ds (A,\beta)$ is upper-semicontinuous for all $s>1$.
\end{theorem}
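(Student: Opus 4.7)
The plan is to decompose $\ds(A,\beta)$ as a finite sum of per-slope jumps and then apply Lemma \ref{upper-semicont-Gevrey-lemma} summand-by-summand.

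First I would use Remark \ref{remark-slopes-umbrella} to identify the slopes of $M_A(\beta)$ along $Y$ with the values $s>1$ at which $\Phi_A^s$ contains a non-$F$-homogeneous facet. These slopes form a finite set $1<s_1<s_2<\cdots<s_r$ that depends only on $A$ and not on $\beta$. The theorem's hypothesis ensures that at every $s>1$, $\Phi_A^s$ has at most one non-$F$-homogeneous facet, so at each slope $s_i$ it has exactly one. In particular, the hypothesis of Lemma \ref{upper-semicont-Gevrey-lemma} is satisfied at each $s_i$.

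Next I would observe that, since $\mu_{A,0}^{s+\epsilon,\tau}(\beta)$ depends on $s$ only through the umbrella $\Phi_A^{s+\epsilon}$ and this umbrella is constant between consecutive slopes, \eqref{eqn:dimIrreg} shows that $s\mapsto \ds(A,\beta)$ is a step function with jumps supported on $\{s_1,\ldots,s_r\}$. Fixing $\epsilon>0$ small enough that $1+\epsilon<s_1$ and that each interval $(s_i-\epsilon,s_i+\epsilon)$ avoids all other slopes, the vanishing $\operatorname{d}_{1+\epsilon}(A,\beta)=0$ together with telescoping yields
\[
\ds(A,\beta) \;=\; \sum_{i:\, s_i\leq s} \bigl(\operatorname{d}_{s_i+\epsilon}(A,\beta)-\operatorname{d}_{s_i-\epsilon}(A,\beta)\bigr) \;=\; \sum_{i:\, s_i\leq s} d(A,\beta,s_i)
\]
for every $s>1$, where $d(A,\beta,s_i)$ is the jump function of Lemma \ref{upper-semicont-Gevrey-lemma}.

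By Lemma \ref{upper-semicont-Gevrey-lemma}, each map $\beta\mapsto d(A,\beta,s_i)$ is upper-semicontinuous, and as a finite sum of such functions, $\beta\mapsto \ds(A,\beta)$ is then upper-semicontinuous. The main (and only real) obstacle is the reduction to Lemma \ref{upper-semicont-Gevrey-lemma} at each slope: one must check that the theorem's uniform-in-$s$ hypothesis that $a_n/s$ lies in at most one facet-supporting hyperplane of $\Delta'$ indeed forces $\Phi_A^{s_i}$ to have a unique non-$F$-homogeneous facet for every $i$. Once this is in hand, the combinatorial bookkeeping on $\epsilon$ (choosing it small enough to simultaneously separate $1$ and all slopes) is routine because the slope set is finite.
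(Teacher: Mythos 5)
Your proposal is correct and follows essentially the same route as the paper: the paper's proof likewise writes $\ds(A,\beta)=\sum_{j} d(A,\beta,s_j)$ as a telescoping sum over the (finitely many, $\beta$-independent) slopes $s_j\leq s$ and invokes Lemma \ref{upper-semicont-Gevrey-lemma} on each summand. Your extra care in verifying that the uniform hypothesis on $a_n/s$ yields the uniqueness of the non-$F$-homogeneous facet required by the lemma, and in choosing $\epsilon$ to separate the slopes, only fills in details the paper leaves implicit.
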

\begin{proof}
Let $1<s_1 < \cdots < s_r$ be the set of slopes of $M_A (\beta)$ along $Y$ that are lower or equal to $s$. Then 
$d_s (A,\beta)=\sum_{j=1}^r d ( A, \beta , s_j )$, and the result follows by Lemma \ref{upper-semicont-Gevrey-lemma}.
\end{proof}

In view of the preceding results we state the following conjecture.

\begin{conjecture}
\label{conj:exceptional-ds}
The map $\beta \mapsto \ds (A,\beta)$ is upper-semicontinuous. Moreover, there is an equality 
\[
\cE_A^{n}(s)\defeq\{\beta\in\CC^d\mid  \ds (A,\beta)> \ds (A) \}=-\,  \operatorname{qdeg}
  \left(
  \bigoplus_{q=0}^{d-1} \Ext^{n-q}_{\CC[\del]} (
  S_A^{\{n\}},\CC[\del])
  (-\varepsilon_A)\right),
\] 
where $\varepsilon_A \defeq \sum_{i=1}^n a_i$. In particular, $\cE_A^{n}(s) =\nothing$ if and only if $S_A^{\{n\}}$ is Cohen--Macaulay.
\end{conjecture}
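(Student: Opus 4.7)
The plan is to extend the arguments behind Lemma \ref{upper-semicont-Gevrey-lemma} and Corollary \ref{cor:EAconvex} to the general setting of arbitrary $A$ and arbitrary slopes. The starting point is the decomposition
\[
\ds(A,\beta) - \ds(A) = \sum_{1 < s_j \leq s} \bigl[d(A,\beta,s_j) - d(A,s_j)\bigr],
\]
where $s_j$ runs over the slopes of $M_A(\beta)$ along $Y$ that lie in $(1,s]$; these all lie in the finite slope set of a generic parameter, so it suffices to analyze each summand separately, and in particular to prove that each summand is upper-semicontinuous and has exceptional locus inside the conjectured $\operatorname{qdeg}$ set.

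Fix a slope $s$ and let $K = K(A,s)$ index the facets $\tau_k \in \Phi_A^{s+\epsilon,d-1}$ that are not $F$-homogeneous. By Remark \ref{remark-slopes-umbrella}, every $\tau_k$ contains $n$, and $\tau'_k \defeq \tau_k \setminus \{n\}$ is a rank-$d$ submatrix of $A$ supported in a facet of $\Delta_{A'}$. The first main step is to establish the identity
\[
d(A,\beta,s) = \sum_{k \in K} \rank\bigl(\HH_0^{\tau'_k}(S_A,\beta)\bigr),
\]
generalizing the single-facet identity in the proof of Lemma \ref{upper-semicont-Gevrey-lemma}. Granting this, \cite[Remark~5.5.(5)]{ekdi} provides a local holonomic-family structure for each $\HH_0^{\tau'_k}(S_A,-)$, and \cite[Theorem~2.6]{MMW} then furnishes upper-semicontinuity of each summand, hence of $d(A,\beta,s)$. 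Summing over slopes yields the first claim of the conjecture.

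For the exceptional-set description, \cite[Theorem~9.1]{MMW} applied to each family $\HH_0^{\tau'_k}(S_A,-)$ identifies the individual exceptional locus with the Zariski closure of the degree set of a direct sum of local cohomology modules $H^i_{\mm_{\tau'_k}}(S_A)$ with $i < d$. The key combinatorial claim is that the union of these loci, taken over all slopes $s_j \leq s$ and over $k \in K(A,s_j)$, equals the single set $-\operatorname{qdeg}\bigl(\bigoplus_{q=0}^{d-1} \Ext^{n-q}_{\CC[\del]}(S_A^{\{n\}},\CC[\del])(-\varepsilon_A)\bigr)$. The guiding principle is that $S_A^{\{n\}}$ is precisely the part of $S_A[\del_n^{-1}]$ supported in $\RR_{\geq 0} A$, so its non-Cohen--Macaulay locus is concentrated on faces $G \preceq A$ with $a_n \in G$ whose higher local cohomology is nonzero; these are exactly the faces producing the non-$F$-homogeneous facets of $\Phi_A^{s+\epsilon}$ as $s$ varies, and graded local duality converts the local-cohomology description into the $\Ext$ description. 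Once this equality is proved, the final Cohen--Macaulay criterion is immediate: $\cE_A^n(s) = \nothing$ iff every $\Ext^{n-q}_{\CC[\del]}(S_A^{\{n\}},\CC[\del])$ with $q < d$ has empty quasidegree set iff each such module vanishes iff $S_A^{\{n\}}$ is Cohen--Macaulay.

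The main obstacle is the identity $d(A,\beta,s) = \sum_{k} \rank(\HH_0^{\tau'_k}(S_A,\beta))$ when $|K| > 1$. In the single-facet case the four multiplicities $\mu^{s \pm \epsilon, \emptyset}_{A,0}(\beta)$ and $\mu^{s \pm \epsilon,\{n\}}_{A,0}(\beta)$ cancel cleanly via Theorem \ref{thm:swFormula} and the arguments of Corollary \ref{cor: upper-semicont-tau-convex-case}. When several $\tau_k$ degenerate at the same slope, a natural strategy is to perturb $L(s)$ slightly so that the $\tau_k$ split into distinct perturbed slopes $s + \delta_k$, apply the single-facet identity to each, and then use Theorem \ref{thm:computeMults} to check that no spurious multiplicity jumps appear in the limit as the $\delta_k \to 0$. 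Carrying this limit out cleanly, and in particular verifying the combinatorial accumulation claim in the previous paragraph, will be the technically delicate part of the argument.
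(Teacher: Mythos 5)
This statement is presented in the paper as Conjecture~\ref{conj:exceptional-ds}, \emph{not} as a theorem: the authors prove the upper-semicontinuity of $\beta\mapsto\ds(A,\beta)$ only under the restrictive hypothesis of the unnamed theorem just before it (that for each $s$, at most one facet of $\Phi_A^{s+\epsilon}$ degenerates), and they do not establish the equality of $\cE_A^n(s)$ with the quasidegree set nor the Cohen--Macaulay criterion in general. There is therefore no proof in the paper for you to have matched, and your proposal should not be read as filling a gap the authors overlooked; rather, it is an attempt to resolve an open problem.

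Judged on its own merits, the proposal correctly reproduces the framework the paper uses in the special case --- the decomposition of $\ds(A,\beta)-\ds(A)$ as a sum over slopes, the identity $d(A,\beta,s)=\rank(\HH_0^{\tau'}(S_A,\beta))$ for a single non-$F$-homogeneous facet $\tau$, and the holonomic-family argument via \cite{ekdi} and \cite{MMW} --- but the two places where you leave details are exactly the two places where the argument fails to close, and you should not present them as technicalities. For the multi-facet identity $d(A,\beta,s)=\sum_{k\in K}\rank(\HH_0^{\tau'_k}(S_A,\beta))$, the perturbation strategy is not obviously sound: the Laurent--Mebkhout formula \eqref{eqn:dimIrreg} ties $d(A,\beta,s)$ to the specific one-parameter family $L(s)=F+(s-1)V_n$, and once you perturb $L(s)$ off that ray to split the degeneracies, the resulting multiplicities no longer compute stalks of an irregularity sheaf along $Y$, so the ``limit as $\delta_k\to0$'' has no a priori meaning at the level of $D$-module invariants. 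One would instead have to prove the identity purely at the level of $L$-characteristic cycle multiplicities via Theorem~\ref{thm:computeMults}, which requires controlling how the lower-dimensional faces shared by several $\tau_k$ contribute --- an inclusion--exclusion problem that is not addressed. For the exceptional-set equality, the reduction to local cohomology and graded Matlis duality is asserted as a ``guiding principle'' rather than carried out; note in particular that $S_A^{\{n\}}$ is a $\ZZ^d$-graded $\CC[\del]$-module but is not $S_A$, so the local duality step used in the proof of Corollary~\ref{cor:EAconvex} (where the relevant maximal ideals $\mathfrak m_L$ and $\mathfrak m$ have the same radical in $S_A$) does not transfer verbatim, and the claimed matching of the union of per-slope exceptional loci with $-\qdeg(\bigoplus_q\Ext^{n-q}_{\CC[\del]}(S_A^{\{n\}},\CC[\del])(-\varepsilon_A))$ is precisely the content of the conjecture, not a corollary of it. In short, the proposal is a reasonable roadmap but not a proof, and the gaps you flag are the reason the authors state this as a conjecture.
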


The values $\ds(A,\beta)$ and $\ds (A)$ defined in this section depend on the variety $Y$ along which we are considering the irregularity sheaf of $M_A(\beta)$. Although we assumed $Y=\Var(x_n)$ for simplicity, 
we can consider any $Y_j:=\Var(x_j)\subseteq\CC^n$ since reordering the variables is equivalent to reordering the columns of $A$. Let $\ds(A,\beta,j)$ and $\ds(A,j)$ denote the values of $\ds(A,\beta)$ and $\ds (A)$ respectively for $Y_j$ in place of $Y$.  In the following example, we compute the difference $\ds (A,\beta ,j)-\ds (A,j)$ for different $j$ by using Corollary \ref{cor:d=3-Gevrey}.

\begin{example}
Let us consider the matrix $A$ in Example \ref{ex:Lmatters-d=3}. The hyperplanes contained in the singular locus of $M_A(\beta)$ are exactly $Y_j$ for $j\in\{2,4,6,7\}$ and there is exactly one slope $s_j\geq 1$ of $M_A(\beta)$ along each $Y_j$. 
More precisely, by Remark \ref{remark-slopes-umbrella}, $s_2=3/2$, $s_4=3$, $s_6=2$, and $s_7=7/6$. It is clear that $\ds(A,\beta',j)=0$ if $1\leq s <s_j$ for any $\beta'\in \CC^d$, so let us assume that $s\geq s_j$ in each case.
We have that $\ds (A,\beta',j)=\ds (A,j)$ for all $\beta'$ and $j\in\{2,7\}$. On the other hand, $\ds(A,\beta',4)-\ds(A,4)$ is $1$ if $\beta'\in \beta+\CC G_1$ and $0$ otherwise.
Finally, $\ds(A,\beta',6)-\ds(A,6)$ is $1$ if $\beta'\in\beta+\CC G_2$ and zero otherwise.
\end{example}

One natural problem after the computation of $\ds(A,\beta)=m$ is to construct an explicit set of Gevrey series $\varphi_1,\ldots,\varphi_m$ along $Y$ at a nonsingular point $p\in Y$ so that their classes in the space $(\cO_{\widehat{X|Y}}(s)/\cO_{X|Y})_p$ form a basis of $H^{0}(\Irr_{Y}^{(s)}(M_A(\beta))_p)$. This was done in \cite{maria-irregular} when $\beta$ is generic enough. At any parameter $\beta$, this problem is much more involved in general. However, it is easy to compute some examples by using a slightly modified version of a method used in \cite{Fer-exp-growth}. 
In order to do so, recall that the direct sum of two matrices $A_1 \in \ZZ^{d_1 \times n_1}, A_2
\in \ZZ^{d_2 \times n_2}$ is the following $(d_1 + d_2 )\times (n_1
+ n_2)$ matrix:
\[
A_1 \oplus A_2 = \left(
\begin{array}{cc}
A_1 & \boldzero_{d_1 \times n_2} \\                                                        \boldzero_{d_2 \times n_1 } & A_2 \end{array}
\right),
\] 
where $\boldzero_{d\times n}$ denotes the $d\times n$ zero matrix. Let $\beta=(\beta^{(1)},\beta^{(2)})$ denote a complex vector in $\CC^{d_1+d_2}\cong\CC^{d_1}\times\CC^{d_2}$. It is easy to show using \cite[Lemma 2.2]{Fer-exp-growth} that 
\[
\ds(A,\beta,n_1)=\ds(A_1,\beta^{(1)},n_1)\cdot\rank(M_{A_2} (\beta^{(2)})). 
\]
Now, let us take $(A_1,\beta^{(1)})$ such that $M_{A_1}(\beta^{(1)})$ has slopes along $\{x_{n_1}=0\}$, and let consider the subset of Gevery series $\{g_1,\ldots,g_{r(1)}\}\subseteq \cO_{\widehat{X|Y}}(s)$ whose classes form a basis of 
\[
H^{0}(\Irr_{\{x_{n_2}=0\}}^{(s)}(M_{A_1}(\beta^{(1)}))_p). 
\]
Let us take also a pair $(A_2,\beta^{(2)})$ for which a basis $\{f_1,\ldots,f_{r(2)}\}$ of convergent series solutions of $M_{A_2}(\beta^{2})$ at a nonsingular point $p'$ is known for a rank--jumping parameter $\beta^{(2)}\in\CC^{d_2}$. Then $\{g_i f_j \mid 1\leq i\leq r(1),\; 1\leq j\leq r(2) \}$ is a basis of 
\[
H^{0}(\Irr_{\{x_{n_2}=0\}}^{(s)}(M_{A_1 \oplus A_2}(\beta))_{(p,p')}),
\] 
where $\beta=(\beta^{(1)},\beta^{(2)})$. Note that  
\[
\dim H^{0}(\Irr_{\{x_{n_2}=0\}}^{(s)}(M_{A_1 \oplus A_2}(\beta))_{(p,p')})
 = 
r(1)\cdot r(2)>\ds(A,n_1)=\ds(A_1,n_1)\cdot \mu_{A_2,0}^F. 
\]
In particular, the smallest example of this family is the one obtained by taking $A=A_1\oplus A_2$ for $A_1=(1\; 2)$ and $A_2=(\widetilde{0},\widetilde{1},\widetilde{3},\widetilde{4})$, where $\widetilde{a}=(1,a)^t$ and $\beta=(b,1,2)^t$ for any $b\in\CC\setminus\ZZ$. We notice that $M_{A_2}((1,2)^t)$ was the first example known of an $A$--hypergeometric system for which the rank is greater than the normalized volume~\cite{ST98}. Indeed, a  basis of $H^{0}(\Irr_{\{x_{n_2}=0\}}^{(s)}(M_{A_1}(b))_p)$ is $\{\overline{\phi_v} \}\subset (\cO_{\widehat{X|Y}}(s)/\cO_{X|Y})_p$, where $\phi_v$ is the $\Gamma$--series associated to $v=(b,0)$ (see \cite{maria-irregular}) and $\rank (M_{A_2}(\beta^{(2)}))=\vol_{\ZZ^2}(A_2)+1=5$ (see \cite{ST98}, where a basis of solutions is also described). Thus, in this case, $M_A (\beta)$ has the slope $s=2$ along $x_2=0$ and for $s\geq 2$, $\ds (A,\beta,2)=\ds(A,2)+1=5$.  

%%%%%%%%%%%%%%%%%%%%%%%%%%%%%%%%%%%%%%
%%%%%%%%%%%%%%%%%%%%%%%%%%%%%%%%%%%%%%
\raggedbottom
% \addcontentsline{toc}{section}{\numberline{}}
% \bibliographystyle{amsalpha}\bibliography{bibliography}
\def\cprime{$'$} \def\cprime{$'$}
\providecommand{\MR}{\relax\ifhmode\unskip\space\fi MR }
% \MRhref is called by the amsart/book/proc definition of \MR.
\providecommand{\MRhref}[2]{%
  \href{http://www.ams.org/mathscinet-getitem?mr=#1}{#2}
}
\providecommand{\href}[2]{#2}
%%%%%%%%%%%%%%%%%%%%%%%%%%%%%%%%%%%%%%
%%%%%%%%%%%%%%%%%%%%%%%%%%%
%%%%%%%%%%%%%%%%%%%%%%%%%%%%%%%%%%%%%%

%%%%%%%%%%%%%%%%%%%%%%%%%%%%%%%%%%%%%%
\end{document}